\def\R{\mathbb R}
\def\N{\mathbb N}
\def\C{\mathbb C}
\def\Z{\mathbb Z}
\def\F{\mathcal F}
\def\H{\mathbb H}
\def\={\equiv}
\def\<{\langle}
\def\>{\rangle}
\def\eps{\varepsilon}
\def\ov{\overline}
\def\inv{^{-1}}
\def\supp{\operatorname{supp}}
\def\stab{\operatorname{Stab}}
\def\bms{m^{\operatorname{BMS}}}
\def\r{\textbf{r}}
\def\v{\textbf{v}}
\def\w{\textbf{w}}
\def\x{\textbf{x}}
\def\y{\textbf{y}}
\def\z{\textbf{z}}
\def\q{\textbf{q}}
\def\K{\mathcal{K}}
\def\1{\mathbf{1}}
\def\SO{\operatorname{SO}}
\def\Up{\Upsilon}
\def\up{\upsilon}
\def\btau{{\bm\tau}}
\def\bsig{\bm{\sigma}}
\def\tup{\tilde{\Upsilon}}
\def\bp{\operatorname{bp}}
\def\d{\textbf{d}}
\def\f{\textbf{f}}
\def\i{\textbf{i}}
\def\b{\textbf{b}}
\def\t{\textbf{t}}
\def\s{\textbf{s}}
\def\ps{\mu^{\operatorname{PS}}}
\def\br{m^{\operatorname{BR}}}
\newtheorem{theorem}{Theorem}[section]
\newtheorem{proposition}[theorem]{Proposition}
\newtheorem{corollary}[theorem]{Corollary}
\newtheorem{lemma}[theorem]{Lemma}
\newtheorem{definition}[theorem]{Definition}
\newtheorem{remark}[theorem]{Remark}
\let\oldtocsection=\tocsection
\let\oldtocsubsection=\tocsubsection
\renewcommand{\tocsection}[2]{\hspace{0em}\oldtocsection{#1}{#2}}
\renewcommand{\tocsubsection}[2]{\hspace{1em}\oldtocsubsection{#1}{#2}}
\def\subsubsection{\@startsection{subsubsection}{3}%
	\z@{.5\linespacing\@plus.7\linespacing}{-.5em}%
	{\normalfont\bfseries}}
\numberwithin{equation}{section}
\title[Factors and joinings]{On factor rigidity and joining classification for infinite volume rank one homogeneous spaces}
\author[J. M. Warren]{Jacqueline M. Warren}
\address[J. M. Warren]{Department of Mathematics, University of California, San Diego}
\thanks{J. M. Warren was supported in part by an NSERC PGS-D3 fellowship.}
\begin{document}
	\maketitle
	
	\begin{abstract}
		We classify locally finite joinings with respect to the Burger-Roblin measure for the action of a horospherical subgroup $U$ on $\Gamma \backslash G$, where $G = \operatorname{SO}(n,1)^\circ$ and $\Gamma$ is a convex cocompact and Zariski dense subgroup of $G$, or geometrically finite with restrictions on critical exponent and rank of cusps. %This extends the classification by Oh and Mohammadi obtained in the case that $G = \operatorname{PSL}_2(\R)$ or $\operatorname{PSL}_2(\C)$ and $\Gamma$ is geometrically finite and Zariski dense.
		
		We also prove in the more general case of $\Gamma$ geometrically finite and Zariski dense that certain $U$-equivariant set-valued maps are rigid.%, generalizing results of Ratner in the finite volume case and of Flaminio and Spatzier for factor maps in the convex cocompact case.
	\end{abstract}

\tableofcontents

\section{Introduction}

In \cite{ratner}, Ratner classified all joinings with respect to horocycle flows on finite volume quotients of $\operatorname{PSL}_2(\R)$, a problem which is closely related to that of measure classification and classification of closed orbits. These problems are well understood in the finite volume case, \cite{ratneracta, ratnerannals, witte}, but for infinite volume, a full picture is not yet clear. 

In \cite{OMM, OMM2, OMM3}, McMullen, Mohammadi, and Oh obtained orbit closure classification in the infinite volume setting, specifically for convex cocompact acyclindral 3-manifolds. This was generalized to higher dimensions by Lee and Oh in \cite{LeeOh}, and to geometrically finite acylindical manifolds in \cite{OhBenoist} by Benoist and Oh. 

Mohammadi and Oh also show equidistribution for non-closed orbits in the geometrically finite case in \cite{joinings}, as well as classifying joinings for geometrically finite quotients of $\operatorname{PSL}_2(\R)$ or $\operatorname{PSL}_2(\C)$.

There is some progress in the geometrically infinite case under certain assumptions as well. For instance, in \cite{ledrappier2}, Ledrappier and Sarig classify measures for geometrically infinite regular covers of compact hyperbolic surfaces, and in \cite{pan}, Pan classifies joinings for $\Z$ or $\Z^2$ covers of compact hyperbolic surfaces.

In addition, the following works are in the setting of either geometrically finite quotients or certain geometrically infinite quotients: \cite{bowditch, burger, roblin}. In particular \cite{ledrappier1,  sarig1, winter} consider the problem of measure classification, while \cite{EL} considers joinings in higher rank. 

The primary purpose of this paper is to extend the classification of joinings from \cite{joinings}, and the proof, a la Ratner, will require proving rigidity of $U$-equivariant set-valued maps, a result which is of independent interest. Ratner proved this for factor maps in the lattice case in \cite{ratnerfactors}, while Flaminio and Spatzier proved a similar result for convex cocompact groups in \cite{FSinventiones}. For set-valued maps, rigidity was proven in the finite volume case by Ratner in \cite{ratner}, where she calls these maps measurable partitions. A partial statement is proven in the case of geometrically finite quotients of $\SO(n,1)^\circ$ for $n=2,3$ in \cite{joinings}. We will prove a full rigidity result for such set-valued maps for arbitrary $n$. We prove rigidity for factor maps with no further assumptions; for set-valued maps, we need to assume the presence of a ``nice'' ergodic measure. 

Recall that $G:=\SO(n,1)^\circ$ corresponds to the group of orientation preserving isometries of real hyperbolic space $\H^n$. Throughout the paper, we assume that \begin{center}$\Gamma_1, \Gamma_2$ are geometrically finite and Zariski dense discrete subgroups of $G$ with infinite co-volume\end{center} and define \[X_i := \Gamma_i \backslash G \text{ for }i=1,2 \text{ and } X := X_1 \times X_2.\] Our factor rigidity statement will hold for geometrically finite groups, but our proof of the joining classification will require assuming either that the $\Gamma_i$'s are convex cocompact, or are geometrically finite with all cusps of full rank and critical exponents larger than $n-\frac{5}{4}$.

Let $U$ denote a horospherical subgroup of $G$. In the infinite volume setting, the natural analogue of the Haar measure used in Ratner's proofs in the finite volume case is the \emph{Burger-Roblin} (BR) \emph{measure} $\br_i = \br_{\Gamma_i}$ (defined in \S\ref{section; bms and br}). The BR measure is the unique locally finite $U$-ergodic measure that is not supported on a closed $U$-orbit, \cite{burger, roblin, winter}.

Our rigidity theorem for factor maps is stated below. A more general version that covers set-valued maps, Theorem \ref{thm; full rigidity thm}, is proven in \S\ref{section; rigidity}, but requires further assumptions. Here, the action of $A$ induces the frame flow on $\Gamma_i \backslash \H^n$, $M$ is the compact centralizer of $A$, and $U^-$ is the opposite horospherical subgroup. %$\bms_i:=\bms_{\Gamma_i}$ denotes the \emph{Bowen-Margulis-Sullivan} (BMS) \emph{measure}, which is defined in \S\ref{section; bms and br}. We will often omit the subscript.

\begin{theorem}
Let $$\Up:X_2\to X_1$$ be a measurable map, and suppose that there exists a $\br$-conull set on which $\Up$ is $U$-equviariant. Then there exists a map $$\hat{\Up}:X_2 \to X_1,$$ a constant $\bsig_0 \in U$, and a $U$-invariant $\br$-conull set $X_2'$ such that for all $x \in X_2'$, \begin{itemize}
	\item $\hat{\Up}(x) = \Up(x)\bsig_0$,
	\item $\hat{\Up}(xh) = \hat{\Up}(x)h$ for all $h \in AMU$, and
	\item $\hat{\Up}(xv) = \hat{\Up}(x)v$ for all $v \in U^-$ such that $xv \in X_2'$.
\end{itemize}
\label{thm; rigidity only for factors}
\end{theorem}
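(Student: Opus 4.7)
The plan is the Ratner-style strategy of extending the assumed $U$-equivariance of $\Up$ to a larger group in two stages: first to $AMU$ (up to a constant translation $\bsig_0 \in U$), and then to $U^-$ via a polynomial shearing argument along the $U$-flow. Both steps rely on the $U$-ergodicity and conservativity of $\br$ (Burger--Roblin--Winter), which replaces the Haar ergodic theory available in the lattice case.

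\emph{Step 1: Promotion to $AMU$-equivariance.} Because $AM$ normalizes $U$, for each $h \in AM$ the map $\Up_h(x) := \Up(xh)\,h^{-1}$ is again $U$-equivariant on a $\br$-conull set (conjugation by $h$ simply relabels the $U$-action by an automorphism). Zariski density of $\Gamma_1$ ensures that $\Gamma_1$ acts freely on a conull subset of $X_1$, so one can measurably define $c_h : X_2 \to G$ by $\Up(xh) = \Up(x)\,c_h(x)\,h$. The $U$-equivariance of both $\Up$ and $\Up_h$ forces $c_h(xu) = u^{-1}\,c_h(x)\,u$ for all $u \in U$. By $U$-ergodicity of $\br$, $c_h$ is essentially constant, and this $U$-invariance pins the constant in the centralizer $Z_G(U) = U$. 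Varying $h$ produces a measurable cocycle $c : AM \to U$ with $c(h_1 h_2) = c(h_1)\cdot(h_1\,c(h_2)\,h_1^{-1})$. Since $A$ acts on $U \cong \R^{n-1}$ by exponential scaling and $M = \SO(n-1)$ acts by rotation, a direct calculation shows that every such cocycle is a coboundary: there exists $\bsig_0 \in U$ with $c(h) = \bsig_0\,h\,\bsig_0^{-1}\,h^{-1}$. Setting $\hat\Up(x) := \Up(x)\,\bsig_0$ then yields the desired $AMU$-equivariance on a conull set.

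\emph{Step 2: Promotion to $U^-$-equivariance.} For $v \in U^-$, compare $\hat\Up(xv)$ with $\hat\Up(x)\,v$ by flowing both points by $u_t \in U$ for large $t$. The conjugate $u_{-t}\,v\,u_t$ admits a Bruhat-type decomposition $v(t)\,a(t)\,m(t)\,u(t) \in U^- A M U$ in which $v(t), a(t), m(t) \to e$ while $u(t)$ converges to a nontrivial element of $U$ along an appropriately rescaled sequence (the classical ``$st^2$'' scale). Writing $(xv)u_t = xu_t \cdot v(t)\,a(t)\,m(t)\,u(t)$, applying $\hat\Up$ to both sides, and using $AMU$-equivariance from Step 1 together with $U$-equivariance of $\hat\Up$, one matches the two limits along a sequence $t_n \to \infty$ for which $xu_{t_n}$ recurs to a set on which $\hat\Up$ is sufficiently continuous, yielding $\hat\Up(xv) = \hat\Up(x)\,v$.

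\emph{Main obstacle.} The shearing step is the primary difficulty. In the infinite-volume geometrically finite setting, recurrence of $U$-orbits of $\br$-generic points to a compact set on which $\hat\Up$ is continuous (in the sense of Lusin) is not automatic: it must be extracted from the conservativity and $U$-ergodicity of $\br$, combined with a non-escape-of-mass estimate near the cusps to justify that $\hat\Up(xu_{t_n}\,h_n) \to \hat\Up(xu_{t_n})$ when $h_n = v(t_n)a(t_n)m(t_n) \to e$. This is also the reason the conclusion is stated on a $U$-invariant $\br$-conull subset $X_2' \subseteq X_2$ rather than pointwise everywhere.
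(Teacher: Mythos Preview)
Your Step 1 contains a genuine gap that hides the entire content of the argument. You write ``one can measurably define $c_h : X_2 \to G$ by $\Up(xh) = \Up(x)\,c_h(x)\,h$'' and then apply $U$-ergodicity. Two problems:

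First, there is no a priori reason such a $c_h(x)$ exists or is well-defined. The points $\Up(x)$ and $\Up(xh)h^{-1}$ live in $\Gamma_1\backslash G$; nothing you have said forces them to be close, and any element of $G$ sending one to the other is only determined up to left multiplication by $\Gamma_1$. Second, even granting a measurable choice of $c_h$, the relation you derive, $c_h(xu)=u^{-1}c_h(x)u$, is \emph{not} $U$-invariance, so ergodicity does not give constancy. Your sentence ``this $U$-invariance pins the constant in the centralizer'' is circular: the relation becomes $U$-invariance only after you know $c_h(x)\in Z_G(U)=U$, which is precisely the conclusion you are trying to reach.

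What is missing is the Ratner polynomial-divergence argument, which in this infinite-volume setting is the substance of the paper's Proposition~\ref{prop;existence of tau}. One takes a Lusin set $K$ on which $\Up$ is uniformly continuous, so that for $h$ small and $xu_\t, xu_\t h\in K$ the quantity $d(\Up(x)u_\t,\Up(xh)h^{-1}u_\t)$ is small. The point is that this distance is (piecewise) polynomial in $\t$, and one must show it stays bounded for \emph{all} $\t$, not just those returning to $K$. In finite volume this follows from the Birkhoff ergodic theorem; here the replacement is Lemma~\ref{lemma; new 6.2}, which says the PS measure is ``friendly'' in the sense that a polynomial cannot be small on most of $B_U(T)$ (in PS measure) without being small everywhere. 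Combined with the recurrence of $x$ to $K$ under $U$ (the set $P_\xi$) and the $(C',\alpha)$-good property for Lebesgue measure, this forces $\Up(x)$ and $\Up(xh)h^{-1}$ into the same $U$-orbit. Only then does one obtain a well-defined $\btau_h(x)\in U$, which is genuinely $U$-invariant and hence constant by ergodicity. Your cocycle/coboundary step and your Step 2 sketch are reasonable once this is in hand, but as written you have skipped the heart of the proof.
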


%Note that $\{\text{cardinality } \ell \text{ subsets of } X_1\}$ is a complete metric space with the Hausdorff metric.

%This theorem is broken down into two more precise statements, Theorems \ref{thm; rigidity; AM part main thm} and \ref{thm;rigidity;U- main thm}, which are proved in \S\ref{section; rigidity}. 
Our proofs follow the outline of Ratner's approach from \cite{ratner}, but care is required in this infinite volume case.

%For the joining classification in \S\ref{section; joining classification}, we will further assume that the $\Gamma_i$'s are convex cocompact or geometrically finite with all cusps of rank $n-1$ and critical exponents greater than $n-\frac{5}{4}$.

Denote by $\Delta(U)$ the diagonal embedding into $G \times G$.

\begin{definition}\label{defn; joining}
	Let $\mu_i$ be a locally finite $U$-invariant Borel measure on $\Gamma_i\backslash G$ for $i=1,2$. A U-\emph{joining} with respect to $(\mu_1,\mu_2)$ is a locally finite $\Delta(U)$-invariant measure $\mu$ on $X$ such that the push-forward measure onto the $i$-th coordinate is proportional to the corresponding $\mu_i$, $i=1,2$. If $\mu$ is $\Delta(U)$-ergodic, we call it an \emph{ergodic $U$-joining}.	
\end{definition}

The primary goal of this article is to classify $U$-joinings in this infinite volume setting for the pair $(\br_1, \br_2)$. Note that in this case, the $\br_i$'s are infinite measures \cite{br infinite}, so the product measure $\br_1 \times \br_2$ is not a $U$-joining. 

We now restate the definition of a \emph{finite cover self-joining} as it appears in \cite{joinings}:

\begin{definition}\label{defn; finite cover self joining}
	Suppose that there exists $g_0 \in G$ so that $g_0\inv \Gamma_1 g_0$ and $\Gamma_2$ are commensurable in $G$. In particular, we have an isomorphism $$(g_0\inv \Gamma_1 g_0\cap\Gamma_2)\backslash G \to [(g_0, 1_G)]\Delta(G)$$ defined by $$[g] \mapsto [(g_0g, g)],$$ where $1_G$ denotes the identity in $G$. The pushforward of the $\operatorname{BR}$ measure $\br_{g_0\inv \Gamma_1 g_0\cap\Gamma_2}$ is a $U$-joining, which we call a \emph{finite cover self-joining}. We also consider any translation of a finite cover self-joining under an element of the form $(u,1_G) \in U \times \{1_G\}$ to be a finite cover self-joining.
\end{definition}

We will obtain the following joining classification:

\begin{theorem}\label{thm; final classification}
	Suppose that $\Gamma_1, \Gamma_2$ are \begin{itemize}
		\item convex cocompact, or
		\item geometrically finite with all cusps full rank and critical exponents greater than $n- \frac{5}{4}$.
	\end{itemize} Then every locally finite ergodic $U$-joining on $X = \Gamma_1 \backslash G \times \Gamma_2 \backslash G$ with respect to $(\br_1, \br_2)$ is a finite cover self-joining.
	
	In particular, $X$ admits a $U$-joining if and only if, up to a conjugation, $\Gamma_1$ and $\Gamma_2$ are commensurable.
\end{theorem}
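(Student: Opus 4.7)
The plan is to follow Ratner's joining classification strategy from \cite{ratner}, adapted to the infinite volume setting along the lines of \cite{joinings}. Let $\mu$ be a locally finite, $\Delta(U)$-ergodic $U$-joining on $X$ with marginals proportional to $(\br_1, \br_2)$. Disintegrating $\mu$ over its projection to $X_2$ produces conditional measures $\{\mu_x\}_{x \in X_2}$ on $X_1$ which, by $\Delta(U)$-invariance of $\mu$ together with $U$-invariance of $\br_2$, satisfy the equivariance $u_*\mu_x = \mu_{xu}$ for $\br_2$-almost every $x \in X_2$ and every $u \in U$.

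The first step, and the main technical obstacle, is to show that the conditional measures $\mu_x$ are atomic with locally finite support for $\br_2$-almost every $x$. The hypotheses of the theorem---convex cocompactness, or all cusps of full rank with critical exponents greater than $n - \frac{5}{4}$---enter precisely here: under these conditions the BMS measure is finite and the frame flow exhibits the quantitative mixing and non-divergence estimates required both to exclude non-atomic conditionals and to invoke the set-valued rigidity machinery, extending the $n = 2,3$ arguments of \cite{joinings} from $\operatorname{PSL}_2(\R)$ and $\operatorname{PSL}_2(\C)$ to arbitrary $n$. Once atomicity is established, the assignment $\Up(x) := \supp(\mu_x)$ defines a measurable, $U$-equivariant set-valued map from $X_2$ to $X_1$ on a $\br_2$-conull set.

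Applying the full set-valued rigidity result, Theorem \ref{thm; full rigidity thm}, one obtains---after possibly translating by a constant $\bsig_0 \in U$---a refined map $\hat\Up$ which is equivariant under $AMU$ and under $U^-$ wherever defined. Since $AMU$ and $U^-$ together generate $G$, this upgrades $U$-equivariance of $\Up$ to $G$-equivariance of $\hat\Up$ on a conull set. Translating this equivariance back to the joining $\mu$, and absorbing the $\bsig_0$-shift as a $(\bsig_0, 1_G)$-translation allowed by Definition \ref{defn; finite cover self joining}, one concludes that the support of $\mu$ is $\Delta(G)$-invariant and that $\mu$ itself is $\Delta(G)$-invariant. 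A locally finite, $\Delta(G)$-invariant, $\Delta(U)$-ergodic measure on $X_1 \times X_2$ must be supported on a single closed $\Delta(G)$-orbit, which up to a left $G$-translation has the form $[(g_0, 1_G)]\Delta(G)$ with stabilizer $\Delta(g_0\inv \Gamma_1 g_0 \cap \Gamma_2)$. For the projections to be proportional to $\br_1$ and $\br_2$ respectively, this stabilizer must have finite index in both $\Gamma_2$ and in $g_0\inv \Gamma_1 g_0$; hence $\Gamma_1$ and $\Gamma_2$ are commensurable after conjugation by $g_0$, and $\mu$ is a finite cover self-joining.

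The hardest step will be the atomicity dichotomy. In the geometrically finite setting with low-rank cusps or small critical exponents, the cusp excursions of the diagonal flow are insufficiently controlled, and the quantitative non-divergence and mixing tools that drive the argument degrade. This is exactly why the restrictions on $\Gamma_1, \Gamma_2$ are imposed in the joining theorem, even though the factor rigidity statement of Theorem \ref{thm; rigidity only for factors} holds under the weaker hypothesis of mere Zariski density and geometric finiteness.
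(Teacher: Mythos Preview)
Your outline follows the paper's approach closely: disintegrate $\mu$ over $\pi_2$, prove the fiber measures are atomic, define a $U$-equivariant set-valued map, apply the rigidity theorem, and conclude. Two points deserve sharpening.

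First, ``atomic with locally finite support'' is not quite what is needed. Theorem~\ref{thm; full rigidity thm} requires $\Up$ to take values in subsets of a \emph{fixed finite cardinality} $\ell$. The paper proves exactly this (Theorem~\ref{thm; 717}): the fiber measures are uniform on $\ell$ points for some $\ell\in\N$. Atomicity is the hard part; once you have it, an ergodicity argument on the set where the fiber measure attains its maximal atom forces the fixed cardinality. You should state this explicitly, since otherwise the hypotheses of the rigidity theorem are not met.

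Second, your account of where the hypotheses on $\Gamma_i$ enter is off. The BMS measure is finite for any geometrically finite group, and the set-valued rigidity Theorem~\ref{thm; full rigidity thm} is proved in \S\ref{section; rigidity} under only geometric finiteness and Zariski density---no extra assumptions. The convex cocompact / full-rank-cusps-with-$\delta>n-\tfrac{5}{4}$ hypotheses are used solely in \S\ref{section; varieties} to control the PS measure of thin neighbourhoods of subvarieties of $U$ (Lemmas~\ref{lem; geomfin var nbhd small} and~\ref{lem;var nbhd small}, feeding into Lemma~\ref{lem;int over V small, abs cty}). This non-concentration is what drives the Ratner-style polynomial-divergence argument (Theorem~\ref{thm; 712}) showing that non-atomic fibers would force $\mu$ to be invariant under a nontrivial connected subgroup of $U\times\{1_G\}$, contradicting \cite[Lemma~7.16]{joinings}. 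It is not a mixing or non-divergence input in the usual sense.
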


Note that by the ergodic decomposition and the $U$-ergodicity of $\br$ \cite{winter}, classifying the ergodic $U$-joinings is sufficient to understand all $U$-joinings.

The proof strategy involves first reducing to a specific $U$-equivariant set-valued map, for which we prove an analogue of Theorem \ref{thm; rigidity only for factors} in \S\ref{section; rigidity}. In \cite{joinings}, Mohammadi and Oh prove rigidity of set-valued maps that are equivariant under a subgroup of $AM$ and under $U$ in the geometrically finite setting. Because of this extra equivariance assumption, their joining classification argument in \cite[Section 7]{joinings} requires arguing that a $U$-joining will be invariant under a subgroup of $AM$ before reducing the problem to rigidity of a certain set-valued map. With the more general rigidity result in \S\ref{section; rigidity}, when proving joining classification, we avoid the need for such an argument.

This article is organized as follows. In \S\ref{section; prelims}, we define notation that is used throughout the paper, the Patterson-Sullivan (PS), Bowen-Margulis-Sullivan (BMS) and Burger-Roblin (BR) measures, and reference some basic properties of these measures. 

In \S\ref{section; rigidity}, we prove Theorem \ref{thm; rigidity only for factors}, by proving the more general Theorem \ref{thm; full rigidity thm}, which includes set-valued maps. In particular, we prove that, under certain assumptions, a $U$-equivariant set-valued map must also be $AMU^-$-equivariant, up to a constant shift by an element of $U$, Theorems \ref{thm; rigidity; AM part main thm} and \ref{thm;rigidity;U- main thm}. This will be key in the final steps of the proof of Theorem \ref{thm; final classification}. Note that the results in this section are proved in the more general setting of $\Gamma_i$'s being geometrically finite and Zariski dense, not necessarily convex cocompact, although in general, for set-valued maps, we need to assume the existence of an ergodic joining-like measure on $X$, see Theorem \ref{thm; full rigidity thm}.

In \S\ref{section; varieties}, we prove general results about the behaviour of PS measure on varieties that will be important in the proof of Theorem \ref{thm; final classification}. In particular, we prove that Lebesgue integrals on small neighbourhoods of varieties are controlled by the PS measure, Lemma \ref{lem;int over V small, abs cty}. Understanding this behaviour is a crucial step needed to generalize the results from \cite{joinings} to higher dimensions.

In \S\ref{section; joining classification}, we show that the fibers of the projection $\pi_2$ onto the second coordinate must be finite, Theorem \ref{thm; 717}. This allows us to use the results of \S\ref{section; rigidity} to prove Theorem \ref{thm; finite cover}, which is a more precise formulation of Theorem \ref{thm; final classification}.

\section{Preliminaries and notation}\label{section; prelims}

For convenience, we remind the reader of the following notation that appeared in the introduction: \begin{itemize}
	\item $G = \operatorname{SO}(n,1)^\circ$ is the connected component of the identity in $\operatorname{SO}(n,1)$. It is the group of orientation preserving isometries of $\H^n$, and $1_G$ denotes the identity element in $G$.
	\item $\Gamma_1, \Gamma_2$ are geometrically finite and Zariski dense discrete subgroups of $G$ with infinite co-volume.
	\item $X_i := \Gamma_i \backslash G$ and $X:= X_1 \times X_2$.
	\item For $H \subset G$, $\Delta(H)$ denotes the diagonal embedding of $H$ into $G \times G$.
\end{itemize}

Define \[A = \{a_s : s \in \R\} \text{ where } a_s = \begin{pmatrix} e^{s}  & & \\ & I_{n-1} & \\ & & e^{-s}\end{pmatrix},\] where $I_{n-1}$ is the $(n-1)\times(n-1)$ identity matrix. 

We denote by $U$ the contracting horospherical subgroup, that is, \[U= \{g \in G : a_s g a_{-s} \to 1_G \text{ as } s \to \infty\}.\] Similarly, we denote by $U^-$ the expanding horospherical subgroup, \[U^- = \{g \in G : a_{s}ga_{-s} \to 1_G \text{ as } s \to -\infty\}.\] Both groups are isomorphic to $\R^{n-1}$, and we use the following parametrizations:

\[U = \{u_\t : \t \in \R^{n-1}\} \text{ where }u_\t = \begin{pmatrix} 1 & & \\ \t^T & I & \\ \frac{1}{2}|\t|^2 & \t & 1 \end{pmatrix} \text{ and }\]
\[U^- = \{v_\t : \t \in \R^{n-1}\} \text{ where } v_\t = \begin{pmatrix} 1 & \t & \frac{1}{2}|\t|^2 \\ & I & \t^T \\ & & 1\end{pmatrix}.\]

We also define \[M = \left\{\begin{pmatrix} 1 & & \\ & m & \\ & & 1 \end{pmatrix} : m \in \operatorname{SO}(n-1)\right\}.\] We will often abuse notation by writing $m \in M$ to refer to the matrix $\begin{pmatrix} 1 & & \\ & m & \\ & & 1 \end{pmatrix}$. Observe that these parametrizations satisfy \[a_{-s}u_\t a_s = u_{\t e^s} \text{ and } m\inv u_\t m = u_{\t m}.\]

We view $G$ as being embedded in $\operatorname{SL}_{n+1}(\R)$. For $T > 0$, we denote balls in $G$ by \[B(T) = \{g \in G : \|g-I\| \le T\} \text{ where } \|\cdot\| \text{ is the max norm on }G,\] and in $U$ by \[B_U(T) = \{u_\t \in U : |\t|\le T\}, \text{ where } |\cdot | \text{ is the max norm on }\R^{n-1}.\] We write $\t \in B_U(T)$ as shorthand for $u_\t \in B_U(T)$. 

On $\Gamma \backslash G$, \begin{equation}
d(\Gamma x, \Gamma y) = \min\{\|g-1_G\| : g \in G, \Gamma x = \Gamma y g\}.\label{def; def of d}
\end{equation}

\subsection{PS measure}\label{section; ps}

We use many definitions and notations as in \cite[Section 2]{joinings}, but provide a paraphrased version here for the convenience of the reader. See \cite[Section 2]{joinings} for more details about these constructions.

Let $\partial(\H^n) \cong \mathbb{S}^{n-1}$ denote the geometric boundary of $\H^n$. For any discrete subgroup $\Gamma$ of $G$, we can define the \emph{limit set} of $\Gamma$, $\Lambda(\Gamma)$, as the accumulation points of any orbit in $\partial(\H^n)$, that is, $$\Lambda(\Gamma) = \ov{\Gamma v}-\Gamma v$$ for any $v \in \H^n$, where the closure is taken in $\H^n \cup \partial(\H^n)$. This is independent of $v$ because $\Gamma$ acts by isometries on $\H^n$. 

We denote by $\Lambda_{\operatorname{r}}(\Gamma)$ the set of \emph{radial limit points} of $\Gamma$. $\xi \in \Lambda(\Gamma)$ is a radial limit point if some (hence every) geodesic ray towards $\xi$ has accumulation points in some compact subset of $\Gamma\backslash G$. A \emph{parabolic} limit point $\xi \in \Lambda(\Gamma)$ is one that is fixed by a parabolic element of $\Gamma$, that is, some element of $\Gamma$ that fixes exactly one element of $\partial(\H^n)$. A parabolic limit point $\xi \in \Lambda(\Gamma)$ is called \emph{bounded parabolic} if the stabilizer of $\xi$ in $\Gamma$ acts cocompactly on $\Lambda(\Gamma) - \{\xi\}$. We denote the set of bounded parabolic limit points by $\Lambda_{\bp}(\Gamma)$. In the case of $\Gamma$ convex cocompact, $$\Lambda(\Gamma) = \Lambda_{\operatorname{r}}(\Gamma).$$ If $\Gamma$ is geometrically finite, then  $$\Lambda(\Gamma) = \Lambda_{\operatorname{r}}(\Gamma) \cup \Lambda_{\bp}(\Gamma),$$ and $\Lambda_{\bp}(\Gamma)$ is countable, \cite{bowditch}. 

Fix a reference point $o \in \H^n$ and a reference vector $w_o \in T_o^1(\H^n)$, the unit tangent space of $\H^n$ at $o$. Consider the maximal compact subgroup $K := \stab_G(o)$. Then $\H^n$ can be viewed as $G/K$. Similarly, $M$ is $\stab_G(w_0)$, and $T^1(\H^n)$ can be identified with $G/M$. 

With these identifications and the parametrizations in \S\ref{section; prelims}, $A$ implements the geodesic flow on $T^1(\H^n)$. That is, if $\{g^t : t \in \R\}$ is the geodesic flow on $T^1(\H^n)$, then $g^t(w_0) = [a_s M]$, where $[\cdot]$ denotes the coset in $G/M$. 

For $w \in T^1(\H^n)$, $w^\pm \in \partial(\H^n)$ denotes the forward or backward endpoints of the geodesic $w$ determines, i.e. $w^\pm = \lim\limits_{t \to \infty} g^t(w)$. For $g \in G$, we define \[g^\pm := gw_0^\pm.\] For $x = \Gamma g \in \Gamma \backslash G$, we write $x^\pm\in \Lambda(\Gamma)$ if $g^\pm \in \Lambda(\Gamma)$ for some representative of the coset. This is well-defined by definition of $\Lambda(\Gamma)$.

Let $x, y \in \H^n$ and $\xi \in \partial(\H^n)$. The \emph{Busemann function} based at $\xi$ is the function $$\beta_\xi(x,y) = \lim\limits_{t \to \infty} d(\xi_t, x) - d(\xi_t,y),$$ where $d$ is the hyperbolic metric and $\xi_t$ is a geodesic ray in $\H^n$ towards $\xi$.

For $\Gamma < G$ discrete, a $\Gamma$\emph{-invariant conformal density of dimension} $\delta>0$ is a family $\{\mu_x : x \in \H^n\}$ of pairwise mutually absolutely continuous finite measures on $\partial(\H^n)$ satisfying \[\gamma_*\mu_x = \mu_{\gamma x} \text{ and } \frac{d\mu_x}{d\mu_y}(\xi) = e^{-\delta \beta_{\xi}(x,y)}\] for all $x,y \in \H^n$ and $\xi \in \partial(\H^n)$, where $\gamma_*\nu_x(E) = \nu_x(\gamma\inv(E))$ for all Borel measurable subsets $E \subseteq \partial(\H^n)$.

Let $\delta_\Gamma$ denote the critical exponent of $\Gamma$. Up to scalar multiplication, there exists a unique $\Gamma$-invariant conformal density of dimension $\delta_\Gamma$, denoted by $\{\nu_x : x \in \H^n\}$, and called the \emph{Patterson-Sullivan density}.

For each $g \in G$, this density allows us to define the \emph{Patterson-Sullivan} (PS) \emph{measure} on a horocycle $gU$ by \[d\ps_{gU}(gu_\t) = e^{\delta_\Gamma \beta_{(gu_\t)^+}(o, gu_\t(o))} d\nu_o(gu_\t)^+.\] If $x^- \in \Lambda_{\operatorname{r}}(\Gamma)$, then the map $u \mapsto xu$ is injective on $U$, and we can define the PS measure on $xU \subseteq \Gamma \backslash G$ by push forward. However, in general, there is some subtlety in defining the PS measure on $xU$; see \cite[Section 2.3]{joinings} for more discussion of this. We note that $\ps_{gU}$ can be viewed as a measure on $U \cong \R^{n-1}$ via $d\ps_g(\t) = d\ps_{gU}(gu_\t)$.

The \emph{Lebesgue density} is $\{m_x : x \in \H^n\}$, where $m_x$ is the unique probability measure on $\partial(\H^n)$ that is invariant under $\stab_G(x)$. The Lebesgue density is a $G$-invariant conformal density of dimension $n-1$. We similarly define the Lebesgue measure on $gU$: \[d\mu_{gU}^{\operatorname{Leb}}(gu_\t)=e^{(n-1)\beta_{(gu_\t)^+}(o, gu_\t(o))} dm_o(gu_\t)^+.\] This is independent of the orbit and is in fact a scalar multiple of the Lebesgue measure on $U \cong \R^{n-1}$, denoted by $d\t$.

Note that for every Borel measurable subset $E \subseteq U$, every $g\in G$, and every $s \in \R$, the properties of conformal densities imply that \[\ps_g(E) = e^{\delta_\Gamma s}\ps_{ga_{-s}}(a_s E a_{-s}).\] In particular, \[\ps_g(B_U(e^s)) = e^{\delta_\Gamma s}\ps_{ga_{-s}}(B_U(1)).\]

We record the following properties of PS measure:

\begin{lemma}\cite[Cor. 1.4]{FSinventiones}
	 For every $g \in G$, every proper subvariety of $U$ is a null set for $\ps_g$.
\label{lem; vars null sets, cty of g to psg}
\end{lemma}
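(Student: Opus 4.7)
The plan is to establish a quantitative ``friendliness'' estimate for $\ps_g$ and then let a thickening parameter tend to zero. Since a proper subvariety of $U \cong \R^{n-1}$ is contained in the zero set of a single nonzero polynomial, and since it is a countable union of compact sets, it suffices to prove the following statement: for every $d \in \N$ and every compact $K \subset U$, there exist $C, \alpha > 0$ (depending on $K$, $g$, and $d$) such that
$$\ps_g\bigl(\{\t \in K : |P(\t)| \le \eps \|P\|_K\}\bigr) \le C\, \eps^\alpha\, \ps_g(K)$$
for every nonzero polynomial $P$ on $U$ of degree at most $d$ and every $\eps > 0$. Taking $\eps \to 0$ then gives the lemma.

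The argument then proceeds in two steps. First I would handle affine hyperplanes $H \subset U$, where the target is the bound $\ps_g(H^{(\eps)} \cap K) \le C \eps^\beta$ on the PS mass of $\eps$-tubes. This combines the conformal scaling identity $\ps_g(B_U(e^s)) = e^{\delta_\Gamma s} \ps_{g a_{-s}}(B_U(1))$ (recorded in \S\ref{section; ps}) with the Sullivan--Stratmann--Urba\'nski global shadow lemma to obtain a two-sided estimate of the form $\ps_g(B_U(r)) \asymp r^{\delta_\Gamma}$ at radial points of the support, with the expected correction at bounded parabolic points. Zariski density of $\Gamma$ then enters: it rules out concentration of $\Lambda(\Gamma)$ on any proper real-algebraic subvariety of $\partial(\H^n)$, and a compactness argument upgrades the pointwise estimate to a uniform one over $K$.

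Second, a Taylor expansion argument promotes the hyperplane bound to arbitrary polynomials. On a ball small enough that $P$ is well approximated by its linear part $L$, the sublevel set $\{|P| \le \eps\}$ is contained in a $\sqrt{\eps}$-neighbourhood of the hyperplane $\{L = 0\}$; on a ball where the gradient of $P$ essentially vanishes, one subdivides and recurses on the effective degree, losing a controlled factor in the exponent at each stage. A finite covering of $K$ by such balls, together with the hyperplane bound from the first step, yields the required estimate with a smaller exponent $\alpha = \alpha(n,d,\delta_\Gamma) > 0$.

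The main obstacle is the hyperplane case: upgrading the pointwise shadow-lemma estimate into a uniform upper bound on the PS mass of hyperplane tubes, especially near bounded parabolic limit points where the shadow lemma acquires a rank-dependent correction, so that the effective decay exponent $\beta$ degenerates unless one handles the parabolic neighbourhoods separately via the Stratmann--Urba\'nski refined estimates. Zariski density of $\Gamma$ is the essential input excluding the degenerate case in which $\Lambda(\Gamma)$ itself sits inside an affine hyperplane, which would invalidate any such polynomial decay.
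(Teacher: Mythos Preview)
The paper does not supply a proof of this lemma; it is quoted directly from \cite[Cor.~1.4]{FSinventiones}. The Flaminio--Spatzier argument is qualitative and considerably lighter than your proposal: assume a proper subvariety $V$ has positive $\ps_g$-mass, pass to a smooth stratum and a density point $\t_0$, and rescale by conjugating with $a_s$. Along times $s_n\to\infty$ for which $gu_{\t_0}a_{-s_n}$ returns to a fixed compact set (available since $\nu_o$-a.e.\ point is a radial limit point), the rescaled measures converge by Lemma~\ref{lem;new cty ps measure} to some $\ps_{g'}$, while the blown-up varieties $a_{s_n}(u_{\t_0}^{-1}V)a_{-s_n}$ converge to the tangent space $T_{\t_0}V$, a proper affine subspace of $U$. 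Hence $\ps_{g'}$ is concentrated on that subspace, forcing an open piece of $\Lambda(\Gamma)$ into a proper real-algebraic subset of $\partial\H^n$ and contradicting Zariski density of $\Gamma$. No decay rate is ever needed.

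Your route through a uniform friendliness bound $\ps_g(\{|P|\le \eps\|P\|_K\}) \le C\eps^\alpha \ps_g(K)$ is a genuinely different and much stronger target, and the paper's own structure shows why it is the wrong direction here. In \S\ref{section; varieties} the \emph{soft} non-concentration statement (Lemma~\ref{lem;var nbhd small}) is derived \emph{from} the present lemma by a compactness argument, and the polynomial-rate version (Lemma~\ref{lem; geomfin var nbhd small}) is obtained only under the extra hypotheses that all cusps have full rank and $\delta_\Gamma > n-\tfrac54$; the exponent produced there is $2\delta_\Gamma-(n-1)+\tfrac{3-2n}{2}$, which is not positive for general geometrically finite $\Gamma$. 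Your sketch correctly flags the parabolic degeneration of the shadow lemma as the main obstacle but does not resolve it, and the Taylor step as written (containment of $\{|P|\le\eps\}$ in a $\sqrt\eps$-tube about $\{L=0\}$) only holds on balls of radius $O(\sqrt\eps)$, so the covering multiplicity $\sim\eps^{-(n-1)/2}$ eats the gain from the hyperplane bound unless that exponent is large. In short, the qualitative lemma is the primitive input in this paper, not a corollary of the quantitative estimate you are aiming for.
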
 

%The following lemma follows from the definition of the PS measure, because it is defined using stereographic projection and the Busemann function.

\begin{lemma}%\cite[Lemma 4.1, Cor. 4.2]{FSinventiones}
	The map $g \mapsto \ps_{gU}$ is continuous, where the topology on the space of all regular Borel measures on $U$ is given by $\mu_n \to \mu \iff \mu_n(f) \to \mu(f)$ for all $f \in C_c(U)$.
	\label{lem;new cty ps measure}
\end{lemma}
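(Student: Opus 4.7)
The plan is to pull back the integrals $\int f\, d\ps_g$, $f \in C_c(U)$, to integrals on $\partial(\H^n)$ against the fixed measure $\nu_o$ via the visual map, so that the entire $g$-dependence is absorbed into continuously varying ingredients (a change of variables and a Busemann factor), and then invoke dominated convergence. Fix $f \in C_c(U)$ with $\supp f \subseteq \overline{B_U(R)}$ and let $g_n \to g_0$ in $G$. Since $u_\t$ fixes $w_0^-$, the map
\[
    \Phi_g : U \to \partial(\H^n) \setminus \{g^-\}, \qquad \Phi_g(\t) = (gu_\t)^+,
\]
is a homeomorphism. Changing variables $\xi = \Phi_g(\t)$ in the defining formula for $\ps_g$ gives
\[
    \int_U f \, d\ps_g \;=\; \int_{\partial(\H^n)} F_g(\xi)\, d\nu_o(\xi),
\]
where
\[
    F_g(\xi) := f\bigl(\Phi_g^{-1}(\xi)\bigr) \, e^{\delta_\Gamma \beta_\xi\bigl(o,\, g u_{\Phi_g^{-1}(\xi)}\cdot o\bigr)},
\]
extended by $0$ at $g^-$.

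I would then verify pointwise convergence and a common majorant for $\{F_{g_n}\}$. Since $g \mapsto g^-$ is continuous, for any fixed $\xi \neq g_0^-$ the preimages $\Phi_{g_n}^{-1}(\xi)$ are defined for all large $n$; joint continuity of $(g,\t) \mapsto (gu_\t)^+$ (hence of its inverse off the excluded fibre), together with continuity of the Busemann function and of $f$, yields $F_{g_n}(\xi) \to F_{g_0}(\xi)$ for every $\xi \neq g_0^-$. Moreover, since $\Phi_{g_0}(\overline{B_U(R)})$ is a compact subset of $\partial(\H^n) \setminus \{g_0^-\}$ and $\Phi_{g_n} \to \Phi_{g_0}$ uniformly on $\overline{B_U(R)}$, there is a fixed compact set $K \subset \partial(\H^n) \setminus \{g_0^-\}$ containing $\Phi_{g_n}(\supp f)$ for all large $n$. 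On $K$ the Busemann factor is bounded uniformly in $n$, so $|F_{g_n}| \le C\, \mathbf{1}_K$ for some constant $C$. Finiteness of $\nu_o$ makes $C\,\mathbf{1}_K$ integrable, and dominated convergence gives $\int f\, d\ps_{g_n} \to \int f\, d\ps_{g_0}$, as required.

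The only mildly technical point is the existence of a common compact majorant $K$ disjoint from $g_0^-$; this reduces to continuity of $g \mapsto g^-$ and uniform convergence of $\Phi_{g_n}$ on $\overline{B_U(R)}$, both of which follow from smoothness of the $G$-action on $\partial(\H^n)$. Note that no atom analysis for $\nu_o$ at $g_0^-$ is needed, since the integrands are supported away from that point.
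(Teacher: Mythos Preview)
Your argument is correct and is essentially a fleshed-out version of the paper's one-line proof, which simply says that continuity follows because the PS measure is defined via stereographic projection and the Busemann function, both of which are continuous. You have made this precise by pulling back to $\nu_o$ via $\Phi_g$ and running dominated convergence; this is exactly the mechanism the paper is gesturing at.
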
 
\begin{proof}
	The proof follows by the definition of the PS measure, since it is defined using stereographic projection and the Busemann function, which are continuous.
\end{proof}

\begin{corollary}\cite[Cor. 2.2]{joinings}
	For any compact set $\Omega \subset G$ and any $T>0$, \[0 < \inf\limits_{g \in \Omega, g^+ \in \Lambda(\Gamma)} \ps_g(B_U(T)) \le \sup\limits_{g \in \Omega, g^+ \in \Lambda(\Gamma)} \ps_g(B_U(T)) < \infty.\]
	\label{cor; inf sup of balls over compact}
\end{corollary}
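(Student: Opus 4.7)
The plan is to sandwich the indicator $\1_{B_U(T)}$ between continuous compactly supported functions and apply Lemma \ref{lem;new cty ps measure} together with the compactness of $\Omega$.

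For the upper bound I would pick any $f \in C_c(U)$ with $f \ge \1_{B_U(T)}$ (for instance a bump supported in $B_U(T+1)$ and identically $1$ on $B_U(T)$), so that $\ps_g(B_U(T)) \le \ps_g(f)$ for every $g$. By Lemma \ref{lem;new cty ps measure} the function $g \mapsto \ps_g(f)$ is continuous on $G$, hence bounded on the compact set $\Omega$; this yields the finite supremum.

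For the lower bound, first note that $K := \{g \in \Omega : g^+ \in \Lambda(\Gamma)\}$ is compact, since $\Lambda(\Gamma)$ is closed and $g \mapsto g^+$ is continuous. Pointwise positivity on $K$ should come from the fact that $\t \mapsto (gu_\t)^+$ is a homeomorphism from $U \cong \R^{n-1}$ onto $\partial(\H^n) \setminus \{g^-\}$ sending $0$ to $g^+$: the image of the interior of $B_U(T)$ is then an open neighborhood of $g^+$, which has positive $\nu_o$-mass because $g^+ \in \Lambda(\Gamma) = \operatorname{supp}\nu_o$, and the defining formula for $\ps_g$ transfers this into $\ps_g(B_U(T)) > 0$.

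To upgrade pointwise positivity to a uniform lower bound I would show that $g \mapsto \ps_g(B_U(T))$ is lower semicontinuous on $K$. Choose $h_k \in C_c(U)$ with $0 \le h_k \nearrow \1_{B_U(T)^\circ}$; each $g \mapsto \ps_g(h_k)$ is continuous by Lemma \ref{lem;new cty ps measure}, so $g \mapsto \ps_g(B_U(T)^\circ) = \sup_k \ps_g(h_k)$ is lower semicontinuous as a supremum of continuous functions. The subtle step, which I expect to be the main obstacle, is matching $\ps_g(B_U(T)^\circ)$ with $\ps_g(B_U(T))$: here I would use that $\partial B_U(T)$ is a finite union of pieces of the coordinate hyperplanes $\{t_i = \pm T\}$, hence contained in a proper real subvariety of $\R^{n-1}$, so Lemma \ref{lem; vars null sets, cty of g to psg} gives $\ps_g(\partial B_U(T)) = 0$. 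A positive lower semicontinuous function on the compact set $K$ attains a positive infimum, and that completes the proof. Without the variety-is-null input one would only get a lower bound on the open ball, which is not quite the statement.
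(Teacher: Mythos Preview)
The paper does not supply its own proof here; the corollary is simply quoted from \cite[Cor.~2.2]{joinings}. Your argument is correct and is the natural way to derive the statement from Lemmas \ref{lem; vars null sets, cty of g to psg} and \ref{lem;new cty ps measure}.

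One minor simplification: you do not actually need Lemma \ref{lem; vars null sets, cty of g to psg} for the lower bound. Once you have established that $g \mapsto \ps_g(B_U(T)^\circ)$ is lower semicontinuous and strictly positive on the compact set $K = \{g \in \Omega : g^+ \in \Lambda(\Gamma)\}$, it already attains a positive infimum there; since $\ps_g(B_U(T)) \ge \ps_g(B_U(T)^\circ)$, the same positive lower bound holds for the closed ball. So your ``subtle step'' is harmless but unnecessary---the inequality $\ps_g(B_U(T)^\circ) \le \ps_g(B_U(T))$ already points in the direction you need.
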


\begin{lemma}
	For every compact subset $\Omega \subseteq X_2$, there exists $\kappa = \kappa(\Omega)>0$ such that $$0<\inf_{x\in \Omega}\ps_x(B_U(\kappa))\le \sup_{x \in \Omega}\ps_x(B_U(\kappa))<\infty.$$
	\label{lem;PS inf kappa for compact set}
\end{lemma}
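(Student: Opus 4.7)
The plan is to reduce the statement on $X_2$ to Corollary \ref{cor; inf sup of balls over compact} on $G$ via a suitable lift. First I would cover $\Omega$ by finitely many evenly covered open sets for the quotient map $\pi_2:G\to X_2=\Gamma_2\backslash G$; picking a continuous section on each and taking the union of their images, I obtain a compact set $\tilde{\Omega}\subset G$ with $\pi_2(\tilde{\Omega})\supseteq \Omega$.

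Next, I would use proper discontinuity of $\Gamma_2$ to pick a uniform injectivity scale. Concretely, since $\tilde{\Omega}$ is compact and $\Gamma_2$ is discrete, the set
\[\{\gamma\in\Gamma_2 : \gamma\,\tilde{\Omega}\cdot B(T)\cap \tilde{\Omega}\cdot B(T)\neq\emptyset\}\]
is finite for every $T>0$, and equals $\{1_G\}$ once $T$ is taken small enough. Choosing $\kappa>0$ so that $B_U(\kappa)\subset B(T)$ for such a $T$, the map $u_\t\mapsto \pi_2(gu_\t)$ is injective on $B_U(\kappa)$ for every $g\in\tilde{\Omega}$. Consequently, for any $x\in\Omega$ with lift $g\in\tilde{\Omega}$, the definition of PS measure by pushforward gives
\[\ps_x(B_U(\kappa))=\ps_g(B_U(\kappa)).\]

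Finally, I would apply Corollary \ref{cor; inf sup of balls over compact} to the compact set $\tilde{\Omega}$ with radius $\kappa$, which directly produces the desired positive lower bound and finite upper bound uniformly over $\{g\in\tilde{\Omega}:g^+\in\Lambda(\Gamma_2)\}$, and hence over $\Omega$ (the statement being understood for $x$ in the support of $\ps$, as usual, where $x^+\in\Lambda(\Gamma_2)$; for $x^+\notin\Lambda(\Gamma_2)$ the quantity $\ps_x(B_U(\kappa))$ vanishes).

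The main obstacle in this plan is verifying that $\kappa$ can be chosen uniformly on $\Omega$ to guarantee injectivity: this is where compactness of $\Omega$ (and hence of $\tilde\Omega$) is essential, since $X_2$ has infinite volume and, in the geometrically finite case, cusps where the injectivity radius is not uniformly bounded below globally. The rest of the argument is essentially a transport of Corollary \ref{cor; inf sup of balls over compact} through the quotient.
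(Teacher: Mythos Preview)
There is a genuine gap in your argument, and it stems from choosing $\kappa$ in the wrong direction. You pick $\kappa$ \emph{small} to get injectivity and then invoke Corollary~\ref{cor; inf sup of balls over compact}. But that corollary only bounds $\ps_g(B_U(\kappa))$ from below over $\{g\in\tilde\Omega: g^+\in\Lambda(\Gamma_2)\}$, whereas the lemma asserts $\inf_{x\in\Omega}\ps_x(B_U(\kappa))>0$ for an \emph{arbitrary} compact $\Omega\subset X_2$. Such an $\Omega$ will typically contain points $x$ with $x^+\notin\Lambda(\Gamma_2)$, and for these your argument gives nothing: if $\kappa$ is small, $B_U(\kappa)$ may miss the support of $\ps_x$ entirely, so $\ps_x(B_U(\kappa))=0$ and the infimum is zero.

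Your parenthetical attempt to dismiss this case is based on a misconception. You write that ``for $x^+\notin\Lambda(\Gamma_2)$ the quantity $\ps_x(B_U(\kappa))$ vanishes,'' but this is false: $\ps_x$ as a measure on $U$ is supported on $\{\t:(xu_\t)^+\in\Lambda(\Gamma_2)\}$, so $\ps_x(B_U(\kappa))>0$ exactly when some $u_\t\in B_U(\kappa)$ has $(xu_\t)^+\in\Lambda(\Gamma_2)$, independently of whether $x^+$ itself lies in the limit set. The lemma is not ``understood'' only on $\supp\bms$; indeed, later in the paper it is applied precisely to produce a $\kappa$ with $xB_U(\kappa)\cap\supp\bms\ne\emptyset$ for all $x$ in a compact set.

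The paper's proof runs in the opposite direction: one chooses $\kappa$ \emph{large enough} that $(xB_U(\kappa/2))^+\cap\Lambda(\Gamma_2)\ne\emptyset$ for every $x\in\Omega$. This is possible because $(xU)^+$ is all of $\partial\H^n$ minus one point, $g\mapsto d_{\partial\H^n}(g^+,\Lambda(\Gamma_2))$ is continuous, and $\Omega$ is compact. That choice forces $\ps_x(B_U(\kappa))>0$ for every $x\in\Omega$, after which continuity of $x\mapsto\ps_x$ (Lemma~\ref{lem;new cty ps measure}) and compactness give the uniform positive lower bound and finite upper bound directly. No lift to $G$ or injectivity argument is needed.
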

\begin{proof}
	Because $G \ni g\mapsto d_{\partial\H^n}(g^+,\Lambda(\Gamma))$ is continuous, there exists $\kappa > 0$ such that $$(gB_U(\kappa/2))^+ \cap \Lambda(\Gamma) \ne \emptyset$$ for every $x = \Gamma g \in \Omega$, and thus $\ps_x(B_U(\kappa))>0$ for all $x \in \Omega$. 
	
	Since $x \mapsto \ps_x$ is also continuous, it follows that $\{\ps_x(B_U(\kappa)):g\in \Omega\}$ is bounded with a positive lower bound.
\end{proof}

\subsection{BMS and BR measures}\label{section; bms and br}

%This section is again largely a condensed version of \cite[Section 2.4]{joinings}. 

The map $w \mapsto (w^+, w^-, \beta_{w^-}(o, \pi(w)))$, where $\pi(w) \in \H^n$ is the base point of $w$, is a homeomorphism between $T^1(\H^n)$ and $$(\partial(\H^n) \times \partial(\H^n) - \{(\xi,\xi) : \xi \in \partial(\H^n)\}) \times \R.$$ This identification allows us to define the BMS and BR measures on $T^1(\H^n) \cong G/M$, denoted by $\tilde{m}^{\operatorname{BMS}}$ and $\tilde{m}^{\operatorname{BR}}$:  \begin{equation}d\tilde{m}^{\operatorname{BMS}}(w) = e^{\delta_\Gamma (\beta_{w^+}(o, \pi(w)) + \beta_{w^-}(o,\pi(w)))}d\nu_o(w^+)d\nu_o(w^-)ds\label{eqn;defn of bms}\end{equation}
\begin{equation}d\tilde{m}^{\operatorname{BR}}(w) = e^{(n-1)\beta_{w^+}(o, \pi(w)) + \delta_\Gamma \beta_{w^-}(o, \pi(w))}dm_o(w^+)d\nu_o(w^-)ds.\label{eqn;defn of br}\end{equation}

By lifting to $M$-invariant measures on $G$, these will induce locally finite Borel measures on $\Gamma \backslash G$, denoted by $\bms$ and $\br$; see \cite[Section 2.4]{joinings} for more details. $\bms$ is a finite measure \cite{sullivan} (which we will assume to be normalized to a probability measure) and $\br$ is infinite if and only if $\Gamma$ is not a lattice, \cite{br infinite}.

We have that $$\supp\bms = \{x \in \Gamma \backslash G : x^\pm \in \Lambda(\Gamma)\}$$ and $$\supp\br = \{x \in \Gamma \backslash G : x^- \in \Lambda(\Gamma)\}.$$ Convex cocompactness is equivalent to $\supp\bms$ being compact.

The relationship between $\bms$ and $\br$ will be important in what follows. Note that by comparing equations (\ref{eqn;defn of bms}) and (\ref{eqn;defn of br}), we see that the most significant difference is in the appearance of $d\nu_o(w^+)$ vs. $dm_o(w^+)$, that is, the major difference is in how they see the $U$ direction. In particular, $\br$ is $U$-ergodic when $\Gamma$ is Zariski dense \cite{winter}, while $\bms$ is not even $U$-invariant. Moreover, \begin{equation}\text{if a set } E \text{ is }U\text{-invariant, then }\bms(E)=1 \iff \br(E^c)=0.\label{eqn;reln bt br and bms}\end{equation}

\section{Rigidity of $U$-equivariant set-valued maps}
\label{section; rigidity}

In this section, we assume only that the $\Gamma_i$'s are geometrically finite and Zariski dense subgroups of $G$. We will prove the following theorem, which includes Theorem \ref{thm; rigidity only for factors} in the first case. Note that the codomain of $\Up$ in the following statement is a complete metric space with the Hausdorff metric.

\begin{theorem} Let $\ell \in \N$, and suppose that $$\Up:X_2 \to \{\text{cardinality } \ell \text{ subsets of } X_1\}$$ is a measurable map and is $U$-equivariant on a $\br$-conull set. Suppose further that either \begin{enumerate}
		\item $\ell =1$, or
		\item there exists a $\Delta(U)$-ergodic measure $\mu$ on $X=X_1 \times X_2$ such that \begin{itemize}
			\item if $Z \subseteq X_2$ is $\br$-conull, then $\bigcup\limits_{x_2\in Z}(\Up(x_2)\times \{x_2\})$ is $\mu$-conull, and
			\item if $W \subseteq X$ is $\mu$-conull, then $\pi_2(W)$ is $\br$-conull.
		\end{itemize}
	\end{enumerate}
	Then there exists a map $$\hat{\Up}:X_2 \to \{\text{cardinality } \ell \text{ subsets of } X_1\},$$ a constant $\bsig_0 \in U$, and a $U$-invariant $\br$-conull set $X_2'$ such that for all $x \in X_2'$,\begin{itemize}
		\item $\hat{\Up}(x) = \Up(x)\bsig_0$,
		\item $\hat{\Up}(xh) = \hat{\Up}(x)h$ for all $h \in AMU$, and
		\item $\hat{\Up}(xv) = \hat{\Up}(x)v$ for all $v \in U^-$ such that $xv \in X_2'$.
	\end{itemize}
	\label{thm; full rigidity thm}
\end{theorem}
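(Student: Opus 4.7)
My plan is to follow Ratner's scheme from \cite{ratner}, adapted to the infinite-volume setting via the PS/BR measures. The argument splits into two separate rigidity statements, matching the naming in the introduction: first $AM$-equivariance of $\Up$ up to a single $U$-shift, and then $U^-$-equivariance. To unify the two hypotheses, I would first observe that in case (1) the pushforward of $\br_2$ under $x\mapsto(\Up(x),x)$ is a locally finite $\Delta(U)$-invariant graph measure whose $\Delta(U)$-ergodicity follows from the $U$-ergodicity of $\br$; thus an ambient ergodic $\Delta(U)$-measure $\mu$ exists in both cases, and the compatibility assumptions in (2) let me convert $\mu$-a.e. statements on $X$ into $\br$-a.e. statements about $\Up$. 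Using Luzin's theorem I fix a compact $K\subset X_2$ of large $\br$-mass on which $\Up$ is uniformly continuous.

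For the $AM$-equivariance, for each $h\in AM$ I would form $\tilde\Up_h(x):=\Up(xh)h\inv$. Since $h$ normalizes $U$ with $h\inv u h\in U$, the $U$-equivariance of $\Up$ passes to $\tilde\Up_h$, so its associated graph/joining measure is again $\Delta(U)$-invariant. Using that the $A$-action scales PS measures on $U$-orbits by $e^{\delta_\Gamma s}$ (Corollary \ref{cor; inf sup of balls over compact}), generic returns of $xa_s$ to $K$ combined with uniform continuity of $\Up$ on $K$ force the graph measures of $\Up$ and of $\tilde\Up_h$ to coincide, by ergodicity of $\mu$. Unpacking this yields a measurable cocycle $c:AM\to U$ with $\Up(xh)=\Up(x)h\,c(h)$ a.e.; a standard continuity argument on a positive-measure set together with the conjugation action of $AM$ on $U$ then trivializes $c$ up to a single constant $\bsig_0\in U$.

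For the $U^-$-equivariance I would run the shearing (polynomial-drift) argument. For $x\in K$ and $v\in U^-$ small, I track $\Up$ along $xU$ and $(xv)U$ simultaneously via the Bruhat-type expansion
\[ u_\t\inv\,v\,u_\t \;=\; v'(\t,v)\,a(\t,v)\,m(\t,v)\,u'(\t,v), \]
whose entries are polynomial in $\t$ and $v$. For $\t$ of carefully chosen magnitude, selected via \emph{joint} returns of $x$ and $xv$ to $K$ using Lemma \ref{lem;new cty ps measure} and Corollary \ref{cor; inf sup of balls over compact}, the factor $v'(\t,v)\,a(\t,v)\,m(\t,v)$ lies in a small ball of $U^-AM$ and is absorbed by uniform continuity combined with the just-established $AM$-equivariance, while the residual $u'(\t,v)\in U$ carries the true discrepancy between $\Up(xv)u_\t$ and $\Up(x)v\cdot u_\t$. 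Driving the residual to zero along a sequence of joint-return times yields $\Up(xv)=\Up(x)v$ on a $U$-invariant $\br$-conull set.

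The shearing step is the main obstacle. Unlike in Ratner's finite-volume proof, $U$-orbit returns to $K$ are governed by the singular PS measure rather than Haar, so equidistribution has to be replaced by the PS-measure estimates of \S\ref{section; prelims}, and the proper subvarieties of $U$ where the polynomial coefficients degenerate must be excluded via Lemma \ref{lem; vars null sets, cty of g to psg}. Furthermore, when $\ell>1$ a $U^-$-translate can \emph{a priori} permute or split the branches of $\Up(x)$ in a $v$-dependent way; hypothesis (2) --- an ambient $\Delta(U)$-ergodic measure $\mu$ coordinating the branches --- is exactly what makes the shearing comparison possible in the set-valued setting.
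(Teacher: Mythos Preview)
Your two-step outline matches the paper's structure, but the $AM$-equivariance step has a genuine gap. You propose to show that the graph measures of $\Up$ and $\tilde\Up_h$ coincide ``by ergodicity of $\mu$,'' but ergodicity of $\mu$ does not by itself force a second $\Delta(U)$-invariant graph measure to equal $\mu$; that would amount to a measure-classification statement, which is what the theorem is ultimately moving toward. The paper instead runs the polynomial-divergence argument \emph{already in the $AM$ step}: for small $h\in AM$ it studies $\Theta_{x,h,i}(\t)=\min\{1,d(\up_i(x)u_\t,\Up(xh)h^{-1}u_\t)^2\}$, bounds its PS-average using returns to $K$, and then invokes the key Lemma~\ref{lemma; new 6.2} (PS-integrals dominate polynomial suprema on $B_U(T)$) to conclude $\sup_\t\Theta_{x,h,i}$ stays bounded, hence $\up_i(x)$ and some $\up_{k(i)}(xh)h^{-1}$ lie on a common $U$-orbit. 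This produces the shift $\btau_h(x,\up_i(x))$. You have placed this shearing machinery only in the $U^-$ step, leaving the $AM$ step without a working mechanism.

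Relatedly, you misplace where hypothesis~(2) enters. In the paper it is used solely in Lemma~\ref{lem; defn Wh}, inside the $AM$ argument, to show that the $\Delta(U)$-invariant function $(x,\up_i(x))\mapsto\btau_h(x,\up_i(x))$ is $\mu$-a.e.\ constant; the $U^-$ step does not require it. Finally, a technical point: the Lusin set $K$ must be taken with large $\bms$-mass, not $\br$-mass, since $\br$ is infinite; the transfer to $\br$-conull statements goes through the $U$-invariance relation~(\ref{eqn;reln bt br and bms}) and the set $P_\xi$ of points with good PS-averages into $K$.
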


Throughout this section, suppose that $\ell$ and  $\Up$ are as in Theorem \ref{thm; full rigidity thm}, and that $\tilde{X}_2 \subseteq X_2$ is such that $$\Up(xu)=\Up(x)u$$ for all $u \in U$ and $x \in \tilde{X}_2$. 

%Let $\ell \in \N$ and suppose that $$\Up:X_2 \to \{\text{cardinality }\ell\text{ subsets of }X_1\}$$ is a measurable map (the right hand side is a complete metric space with the Hausdorff metric) such that there exists a $U$-invariant $\br$-conull set $\tilde{X_2}\subseteq X_2$ with $$\Up(xu)=\Up(x)u$$ for all $u \in U$ and $x \in \tilde{X}_2$.
By a standard argument for constructing measurable cross-sections, we may also assume that there exist measurable maps $\up_i:\tilde{X_2} \to X_1$ such that $$\Up(x) = \{\up_1(x),\ldots, \up_\ell(x)\}$$ for all $x \in \tilde{X_2}.$ The following construction shows that we can further assume that the maps $\up_i$ are defined $\bms$-a.e.:\ let $\{B_n:n\in\N\}$ be a countable collection of balls that cover $X_2$ and satisfy $\bms(\partial B_n)=0$. Proceed inductively: for $x \in( B_n\cap AMU^- \cap \tilde{X}_2 )- B_{n-1}$ and $u$ so that $xu \in B_n - B_{n-1}$, define $$\up_i(xu)=\up_i(x)u.$$ This shows that the $\up_i$'s can be measurably defined on $\tilde{X}_2U - \bigcup\limits_{n=1}^\infty \partial B_n$, a $\bms$-conull set.

We will assume throughout this section that either \begin{enumerate}
	\item $\ell =1$, or
	\item there exists a $\Delta(U)$-ergodic measure $\mu$ on $X=X_1 \times X_2$ such that \begin{itemize}
		\item if $Z \subseteq X_2$ is $\br$-conull, then $\bigcup\limits_{x_2\in Z}(\Up(x_2)\times \{x_2\})$ is $\mu$-conull, and
		\item if $W \subseteq X$ is $\mu$-conull, then $\pi_2(W)$ is $\br$-conull.
	\end{itemize}
\end{enumerate}

\begin{remark}
	\emph{These further assumptions are needed only in the proof of Lemma \ref{lem; defn Wh}.}
\end{remark}

%\begin{equation}
%\label{def;br conull implies mu conull} \text{if } Z \subseteq X_2 \text{ is } \br\text{-conull, then } \bigcup\limits_{x_2 \in Z}(\Up(x_2)\times\{x_2\}) \text{ is } \mu\text{-conull, and}
%\end{equation}
%\begin{equation}
%\label{def;mu conull implies pi2 br conull} \text{if } W \subseteq X \text{ is } \mu\text{-conull, then } \pi_2(W) \text{ is } \br\text{-conull},
%\end{equation} where $\pi_2: X \to X_2$ is the natural projection map.

\begin{remark}
	\emph{In our application to joining classification in \S\ref{section; joining classification}, the conditions in case (2) will be satisfied with $\mu$ an ergodic $U$-joining for $(\br_1,\br_2)$ and $$\Up(x_2):= \pi_1(\pi_2\inv(x_2)).$$ That this $\Up$ will take values in cardinality $\ell$ subsets of $X_1$ for some $\ell \in \N$ is not immediately clear, and this is proven in \S\ref{section; fibers finite}.}
\end{remark}

\S\ref{section; AMU equiv} is dedicated to the proof of the following theorem:

\begin{theorem} There exists a $U$-invariant $\br$-conull set $Y \subseteq X_2'$, a constant $\bsig_0 \in U$, and $\tilde{\Up}:X_2 \to \{\text{cardinality }\ell \text{ subsets of } X_1\}$ satisfying: \begin{enumerate}
		\item $\tilde{\Up} =\Up$ on $Y$,
		\item for all $x \in Y,$ and all $h \in AMU$, $\tilde\Up(xh)u_{\bsig_0} = \tilde{\Up}(x)u_{\bsig_0}h.$
	\end{enumerate} \label{thm; rigidity; AM part main thm} 
\end{theorem}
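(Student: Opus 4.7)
The argument adapts Ratner's factor rigidity strategy from \cite{ratner} to the infinite volume setting, with PS-generic returns replacing Lebesgue-generic ones and joint recurrence coming from the local product structure of $\br$ along horocycles. The algebraic engine is that in $G=\operatorname{SO}(n,1)^{\circ}$ one has $Z_G(U)=U$ (since $M=\operatorname{SO}(n-1)$ acts faithfully on $\R^{n-1}\cong U$ by rotations), so any measurable cocycle recording the failure of $\Up$ to be $AM$-equivariant is forced to take values in $U$, and the constant $\bsig_0$ will emerge as the class of a coboundary.

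First I would use Lusin's theorem to fix a compact set $K\subseteq X_2$ with $\bms(K)$ close to $1$ on which each selection $\up_1,\ldots,\up_\ell$ is uniformly continuous. Corollary \ref{cor; inf sup of balls over compact}, Lemma \ref{lem;PS inf kappa for compact set}, and the $U$-ergodicity of $\br$ \cite{winter} yield a Birkhoff-type return statement: for $\br$-a.e.\ $x$, the set of $\t\in\R^{n-1}$ with $xu_\t\in K$ has positive lower $\ps_x$-density. For $h=a_sm\in AM$ close to $1_G$, conjugation acts linearly on $U$ by $hu_\t h^{-1}=u_{\phi_h(\t)}$ with $\phi_h(\t)=e^{-s}\t m$, which is bi-Lipschitz on $\R^{n-1}$; combining with continuity of PS densities (Lemma \ref{lem;new cty ps measure}) upgrades the previous statement to a \emph{joint} recurrence result, a $\ps_x$-positive density set of $\t$ with both $xu_\t\in K$ and $xhu_\t\in K$. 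On this joint return set, the identity $xhu_\t=xu_{\phi_h(\t)}h$ shows that $xhu_\t$ and $xu_{\phi_h(\t)}$ both lie in $K$ and differ on the right by $h$, so uniform continuity of the $\up_i$ on $K$ produces a measurable permutation $\tau=\tau_h(x)$ of $\{1,\ldots,\ell\}$ with $\up_i(xhu_\t)$ close to $\up_{\tau(i)}(xu_{\phi_h(\t)})$ for every $i$, the closeness tending to $0$ as $\|h\|\to 0$. In case (2), the $\Delta(U)$-ergodic measure $\mu$ provided by Lemma \ref{lem; defn Wh} is used precisely here to show that $\tau_h$ extends to a well defined, measurable, $U$-invariant matching on a conull set; in case (1) no matching is needed. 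Combining with the $U$-equivariance of the set $\Up$ produces a measurable cocycle $c_h:X_2\to G$ with $\up_i(xh)=\up_{\tau(i)}(x)\,h\,c_h(x)$ componentwise and the twisted relation $c_h(xu_{\phi_h(\t)})=u_\t^{-1}\,c_h(x)\,u_\t$ holding for $\br$-a.e.\ $x$ and $\ps_x$-a.e.\ $\t$.

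Finally I would collapse this cocycle. Since $Z_G(U)=U$, the twisted equivariance together with ergodicity of $U$ (via $\br$ in case (1), or via the projection of $\mu$ to $X_2$ in case (2)) forces $c_h(x)\in U$ a.e.\ and then that $c_h$ is essentially $x$-independent, yielding $c_h\in U$ depending only on $h$. The resulting map $AM\to U$ is a $1$-cocycle for the adjoint action of $AM$ on $U=\R^{n-1}$; because $M$ is compact (so its restriction is a coboundary by averaging) and $A\cong\R$ (so any $A$-cocycle into the $A$-module $U$ is exact), this cohomology class is trivial, producing a single $\bsig_0\in U$ with $c_h=u_{-\bsig_0}\cdot(h^{-1}u_{\bsig_0}h)$. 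Substituting into $\Up(xh)=\Up(x)h\,c_h$ rearranges to $\Up(xh)u_{\bsig_0}=\Up(x)u_{\bsig_0}h$, so defining $\tilde{\Up}:=\Up$ on the $U$-saturated $\br$-conull set $Y$ of points satisfying all of the above delivers the conclusion; the extension from $AM$ to $AMU$ is automatic from the $U$-equivariance of $\Up$ and commutativity of $U$. The step I expect to be most delicate is the set-valued bookkeeping in the matching: the permutations $\tau_h(x)$ are defined only locally by continuity, and showing that they assemble into a globally measurable, $U$-invariant matching with a well-defined $G$-valued cocycle is exactly what requires Lemma \ref{lem; defn Wh} and hence the auxiliary measure $\mu$ in case (2). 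A secondary technical point is the quantitative comparison of $\ps_x$ with $\ps_{xh}$ needed for joint PS-recurrence, which is handled by Lemmas \ref{lem;new cty ps measure} and \ref{lem;PS inf kappa for compact set} together with the bi-Lipschitz nature of $\phi_h$.
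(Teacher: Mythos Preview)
Your outline captures the overall architecture correctly---joint PS-recurrence to a Lusin set, extraction of a $U$-valued shift, and a coboundary argument producing $\bsig_0$---but there is a genuine gap at the step where you produce the cocycle $c_h:X_2\to G$.

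The difficulty is that in $X_1=\Gamma_1\backslash G$ the element $g$ relating two points $y,z$ is only determined up to a coset of $\Gamma_1$, so ``$c_h(x)$'' is not a well-defined element of $G$. The natural choice---the unique small $g$ within the injectivity radius---is available only at return times, i.e.\ when both $xu_\t$ and $xhu_\t$ lie in the Lusin set $K$. You then want to propagate this to all of $X_2$ by the twisted relation $c_h(xu_{\phi_h(\t)})=u_\t^{-1}c_h(x)u_\t$, and afterwards use boundedness on a Lusin set plus $Z_G(U)=U$ to conclude $c_h\in U$. But the propagation step already presupposes what you are trying to prove: if $c_h(x)\notin U$, then $u_\t^{-1}c_h(x)u_\t$ grows polynomially, so for large $\t$ it will not be the ``small'' representative at the translated point, and the twisted relation need not hold. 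In other words, consistency of the cocycle across the $U$-orbit and membership in $U$ are the same issue, and neither can be used to bootstrap the other.

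The paper breaks this circularity by never defining a $G$-valued cocycle. Instead it works directly with the polynomial distance functions $\Theta_{x,h,i}(\t)=\min\{1,d(\up_i(x)u_\t,\Up(xh)h^{-1}u_\t)^2\}$ and proves (Lemma~\ref{lemma; new 6.2}) that the PS measure is ``friendly'' in the sense that a small PS-average of a polynomial forces a small supremum. Since Corollary~\ref{cor;rigidity;leq 5eps} gives a small PS-average from the joint recurrence you describe, this yields $\sup_{\t\in B_U(T)}\Theta_{x,h,i}(\t)<\tilde\rho$ for all large $T$ (Lemma~\ref{lem;theta leq tilderho}). A Lebesgue pigeonhole then isolates a single branch $q_{i,k(i)}$ that stays bounded, and polynomial nondivergence (Lemma~\ref{lem; polynomial nondiv}) forces $\up_i(x)$ and $\up_{k(i)}(xh)h^{-1}$ into the same $U$-orbit, giving $\btau_h(x,\up_i(x))\in U$ directly. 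Your proposal contains no analogue of Lemma~\ref{lemma; new 6.2}, and without it the passage from ``close on a positive-density set of return times'' to ``bounded on the whole $U$-ball'' does not go through in infinite volume.

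Two secondary remarks. First, the shift $\btau_h$ in the paper is a priori a function of both $x$ and the individual element $\up_i(x)$; your assertion that a single $c_h(x)$ works componentwise for all $i$ is precisely the content of Lemma~\ref{lem; defn Wh}, not something available beforehand. Second, your cohomological endgame (triviality of $H^1(AM,U)$ via compactness of $M$ and exactness for $A$) is morally correct but requires first knowing that $h\mapsto\btau_h$ is a genuine continuous cocycle on all of $AM$; the paper obtains this by extending from a countable dense $A'M'$ via the uniform continuity of $\Up$ on $AM$-orbits (Lemma~\ref{lem; up ucts}, Corollary~\ref{cor;up cts on AM orbit}) and then shows constancy of $\bsig_h$ by a concrete Poincar\'e recurrence argument rather than by abstract cohomology.
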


The heart of the argument lies in the following proposition.

\begin{proposition}
	For all sufficiently small $\eta>0$, there exists $\ov\eps>0$ with $\ov\eps\to 0$ as $\eta\to 0$ and a $U$-invariant $\br$-conull set $\hat{X}_h$ satisfying: for every $h \in B_{AM}(\eta)$, every $x \in \hat{X}_h$, and every $1 \le i \le \ell$, there exists a unique $1 \le k(i) \le \ell$ and $\btau_h(x,\up_i(x)) \in B_U(\ov\eps)$ such that $$\up_{k(i)}(xh) = \up_i(x)u_{\btau_h(x,\up_i(x))}h.$$ Moreover, $\btau_h(x,\up_i(x)) = \btau_h(xu_\t, \up_i(x)u_\t)$ for all $\t \in U$.%, and $h \mapsto \btau_h(x,\up_i(x))$ is uniformly continuous.
	\label{prop;existence of tau}
\end{proposition}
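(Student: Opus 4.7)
The proof is a Lusin regularization followed by a rigidity argument exploiting $U$-equivariance and the commutation relations in $\SO(n,1)$.

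By Lusin's theorem applied to $\Up$ (into the Hausdorff-metric space of cardinality-$\ell$ subsets of $X_1$) and each selector $\up_i$, I fix, for small $\delta>0$, a compact $K\subseteq X_2$ with $\br(K^c)<\delta$ on which $\Up$ and all $\up_i$ are uniformly continuous. Because $\br$ is $AM$-quasi-invariant and the $AM$-action is continuous, for $\eta$ small the set $E_h:=\{x\in K:xh\in K\}$ has positive $\br$-measure for every $h\in B_{AM}(\eta)$; let $\hat X_h$ be the $U$-saturation of $E_h$ intersected with $\tilde X_2$, which is $U$-invariant and $\br$-conull by $U$-ergodicity of $\br$. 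For $x\in\hat X_h$, pick $\t$ with $xu_\t,\,xhu_\t\in K$. Uniform Hausdorff-continuity of $\Up$ yields a unique nearest-neighbor pairing $i\mapsto k(i)$ and elements $g_i\in G$ with $\|g_i-1_G\|=O(\eta)$ satisfying $\up_{k(i)}(xhu_\t)=\up_i(xu_\t)g_i$; using $\up_\cdot(xu_\t)=\up_\cdot(x)u_\t$ and the analogous relation at $xh$, this descends to $\up_{k(i)}(xh)=\up_i(x)g_i$ at the base point.

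The crux is to show $g_i\in Uh$. Write the Bruhat form $g_i=v_\bsig(am)_0 u_\btau h$ with $v_\bsig\in U^-$, $(am)_0\in AM$, $u_\btau\in U$; the claim reduces to $\bsig=0$ and $(am)_0=1_G$. Applying the same construction to the translated pair $(xu_\t,\up_i(x)u_\t)$ and using $u_\t h=h u_{h^{-1}.\t}$ (where $u_{h^{-1}.\t}:=h^{-1}u_\t h\in U$), together with $U$-equivariance of $\Up$ at $xh$, the shift $g_i'$ at the new pair must equal $u_\t^{-1}g_iu_{h^{-1}.\t}$. Expanding in Bruhat form via the $\SO(n,1)$-commutation, the $U$-component of $g_i'$ acquires terms of order $|\t|^2|\bsig|$ (from conjugating $v_\bsig$) and $|\t|\cdot|(am)_0|$ (from the $AM$-commutation), yet uniform continuity at recurrence times forces $\|g_i'-1_G\|=O(\eta)$. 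Taking $\t$ arbitrarily large in the $\ps_x$-positive recurrence set — provided by Birkhoff for $\br$ in case (1), and by $\Delta(U)$-recurrence under $\mu$ in case (2) — forces $\bsig=0$ and $(am)_0=1_G$, hence $g_i=u_{\btau'}h$ with $|\btau'|\le\ov\eps$ and $\ov\eps\to 0$ as $\eta\to 0$. Uniqueness of $k(i)$ and $\btau'$ is immediate from local injectivity of $\btau\mapsto\up_i(x)u_\btau h$.

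The ``Moreover'' statement is a direct substitution: starting from the defining equation at $(xu_\t,\up_i(x)u_\t)$, writing $xu_\t h=xh u_{h^{-1}.\t}$ and applying $U$-equivariance of $\Up$ at $xh$ to rewrite the left side as $\up_{k(i)}(xh)u_{h^{-1}.\t}$, cancellation of $u_{h^{-1}.\t}$ recovers the defining equation at $(x,\up_i(x))$ with the same $\btau$. The principal obstacle is the rigidity step: it relies on enough joint recurrence times for which $(xu_\t,xhu_\t)$ lies simultaneously in $K\times K$, which is precisely where the $\Delta(U)$-ergodic measure $\mu$ of the hypothesis enters in case (2); in case (1), the graph of $\up_1$ itself supplies a $\Delta(U)$-invariant measure sufficient for Birkhoff. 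Maintaining consistent labeling of the $\up_i$'s across recurrence times is an additional technicality, handled by the inductive construction of the $\up_i$'s on $\tilde X_2U$ set up just before the statement.
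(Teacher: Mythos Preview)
Your proposal has a genuine gap at the rigidity step, and a secondary gap in the labeling argument.

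\textbf{The rigidity step.} You claim that because $\|g_i'-1_G\|=O(\eta)$ at recurrence times with $|\t|$ arbitrarily large, the $U^-$ and $AM$ components of $g_ih^{-1}$ must vanish. This works when $U\cong\R$, but here $U\cong\R^{n-1}$ with $n-1\ge 1$ arbitrary, and a nonconstant polynomial in several variables can stay bounded on an unbounded set (e.g.\ along a hyperplane). Nothing in your argument prevents the recurrence set from lying in exactly such a region. The paper's key device for this is Lemma~\ref{lemma; new 6.2}: for $x$ with returns to the compact set under $A$, the PS-average of a polynomial over $B_U(T)$ dominates a fixed multiple of its supremum. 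Combined with Corollary~\ref{cor;rigidity;leq 5eps} (the PS-average of $\Theta_{x,h,i}$ is small, from simultaneous recurrence), this forces $\sup_{B_U(T)}\Theta_{x,h,i}<\tilde\rho$ for \emph{all} large $T$ (Lemma~\ref{lem;theta leq tilderho}), not just at recurrence times. Only then does the polynomial non-divergence of Lemma~\ref{lem; polynomial nondiv} apply. Your outline bypasses this PS-measure ``friendliness'' ingredient, and without it the conclusion does not follow in higher dimension.

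\textbf{The labeling issue.} You assert the relation $g_i'=u_\t^{-1}g_i\,u_{h^{-1}.\t}$, which requires that the same index $k(i)$ realizes the nearest neighbor at every recurrence time. But the nearest-neighbor pairing between $\Up(x)u_\t$ and $\Up(xh)u_{h^{-1}.\t}$ can change with $\t$; the inductive construction of the $\up_i$'s before the statement only gives $U$-equivariance of the selectors \emph{within} each ball $B_n$, not globally. The paper sidesteps this by working with the distance to the full set, $\Theta_{x,h,i}(\t)=\min\{1,d(\up_i(x)u_\t,\Up(xh)h^{-1}u_\t)^2\}$, proving this is uniformly small, and then using pigeonhole (Corollary~\ref{cor; J(t,i,k(i)) stuff for qijs}) plus the $(C',\alpha)$-good property of polynomials to extract a single $k(i)$ for which $q_{i,k(i)}$ is globally bounded.

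\textbf{On the use of $\mu$ and Birkhoff.} The measure $\br$ is infinite, so ``Birkhoff for $\br$'' does not directly yield recurrence; the paper instead uses the BMS measure via the set $P_\xi$ and the correspondence~(\ref{eqn;reln bt br and bms}). Also, the paper explicitly remarks that the hypothesis on $\mu$ is \emph{not} used in the proof of this proposition --- it enters only later, in Lemma~\ref{lem; defn Wh}, to show $\btau_h$ is constant. So invoking $\mu$ here is misplaced.
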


An analogous statement is proven for convex cocompact groups and assuming $\Up$ is a factor map in \cite{FSinventiones}, following Ratner's approach from \cite{ratnerfactors}. In \cite{ratner}, Ratner proves this in the case of finite volume set-valued maps, which she refers to as measurable partitions; we follow the general lines of her approach in this section. Lemma \ref{lemma; new 6.2}, which appears in \S\ref{section; AMU equiv} and is a modification of \cite[Lemma 6.2]{joinings}, is vital to our approach for generalizing this to the infinite volume case. 

In \S\ref{section; U-equiv}, we complete our rigidity statement by proving the following theorem.

\begin{theorem}
	Let $\hat\Up(x) = \tup(x)u_{\bsig_0}$, where $\tup$ is as in Theorem \ref{thm; rigidity; AM part main thm}. There exists a $\br$-conull set $X_2'' \subseteq X_2'$ such that for all $x \in X_2''$ and for every $v_\r \in U^-$ with $xv_\r \in X_2''$, we have \[\hat\Up(xv_\r) = \hat\Up(x)v_\r.\]
	\label{thm;rigidity;U- main thm}
\end{theorem}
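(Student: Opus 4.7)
The plan is to leverage the $AMU$-equivariance from Theorem \ref{thm; rigidity; AM part main thm} together with a Bruhat-decomposition based cocycle argument and the $U$-ergodicity of $\br$ from \cite{winter}. For clarity I work in the factor case $\ell=1$; the set-valued case ($\ell > 1$) is analogous after using the joining hypothesis in case (2) of Theorem \ref{thm; full rigidity thm} to track the $\ell$ sheets of $\hat\Up$ consistently. Writing $Y$ for the $\br$-conull $U$-invariant set on which $AMU$-equivariance holds, for small $v_\r \in U^-$ and $x \in Y$ with $xv_\r \in Y$ I define the defect element
\[
D_\r(x) := \hat\Up(x)\inv \hat\Up(xv_\r) v_\r\inv \in G,
\]
so that the claim reduces to $D_\r(x) = 1_G$ for all small $\r$ and $\br$-a.e.\ $x$.

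The key algebraic input is the Bruhat decomposition $u_\t v_\r = v_{\r^*} h^* u_{\t^*}$ with $h^* \in AM$, valid for $(\r,\t)$ in the big cell and with all parameters smooth in $(\r,\t)$. Combining $U$-equivariance at $x$ with $AMU$-equivariance at $xv_{\r^*}$, applied to the cocycle identity for $g \mapsto \hat\Up(x)\inv\hat\Up(xg)$, yields after cancellation the conjugation rule
\[
D_\r(xu_\t) = u_\t\inv \, D_{\r^*(\r,\t)}(x) \, u_\t.
\]
An explicit computation (generalizing the $\operatorname{SL}_2$ identity $\r^*(\r,\t) = \r/(1+\r\t)$) shows the crucial shrinking feature $|\r^*(\r,\t)| \to 0$ at rate $|\t|\inv$ as $|\t| \to \infty$ with $\r$ fixed.

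The dynamical input for triviality is then as follows. By Lusin's theorem there is a compact set $K \subseteq Y \cap \supp\bms$ of positive $\bms$-measure on which $\hat\Up$ is continuous, so $D_\r \to 1_G$ uniformly on $K$ as $\r \to 0$. By Poincar\'e recurrence of the $U$-action on $\bms$, for $\bms$-a.e.\ $x \in K$ there is a sequence $\t_n \to \infty$ with $xu_{\t_n} \in K$, along which $D_{\r^*(\r,\t_n)}(x) \to 1_G$ while $D_\r(xu_{\t_n})$ stays in a bounded neighbourhood of $1_G$. Since $\operatorname{Ad}(u_\t\inv)$ acts with polynomial growth in $|\t|$ on $\operatorname{Lie}(U^-) \oplus \operatorname{Lie}(AM)$, any first-order component of $D_{\r^*}(x)$ in these directions forces $u_\t\inv D_{\r^*}(x) u_\t$ to leave every compact set, contradicting the boundedness of $D_\r(xu_{\t_n})$. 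This forces $D_\r(x) \in U$, and a dual argument using $v_\r u_\t = u_{\t^{**}} h^{**} v_{\r^{**}}$ together with $U$-equivariance (which makes $D_\r$ constant along $U$-orbits of its argument) eliminates the remaining $U$-component. $U$-ergodicity of $\br$ then propagates $D_\r \equiv 1_G$ to a $\br$-conull set for each small $\r$.

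Extension to arbitrary $v_\r \in U^-$ follows from $A$-equivariance: writing $v_\r = a_s v_{\r e^{-s}} a_{-s}$ with $s > 0$ large makes $v_{\r e^{-s}}$ small, whence
\[
\hat\Up(xv_\r) = \hat\Up(xa_s)\, v_{\r e^{-s}} \, a_{-s} = \hat\Up(x)\, v_\r
\]
for $\br$-a.e.\ $x$ by Poincar\'e recurrence of the $A$-action on $\bms$ combined with the relation (\ref{eqn;reln bt br and bms}). The main obstacle is the rigidity argument in the third paragraph: the $\operatorname{Lie}(U^-)$ components of $D_{\r^*}(x)$ are killed immediately by the quadratic blow-up of $\operatorname{Ad}(u_\t\inv)$, but the $\operatorname{Lie}(AM)$ components scale only linearly in $\t$ and their elimination requires balancing against the $|\t|\inv$ rate of $|\r^*(\r,\t_n)|$, needing finer quantitative controls on PS measure from \S\ref{section; ps} together with non-divergence estimates in the spirit of Lemma \ref{lemma; new 6.2}.
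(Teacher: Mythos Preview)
Your conjugation identity $D_\r(xu_\t)=u_\t\inv D_{\r^*(\r,\t)}(x)u_\t$ is derived correctly, but it does not yield the conclusion you draw. Both sides involve \emph{varying} data: the left side has the fixed parameter $\r$ but the moving basepoint $xu_{\t_n}$, while the right side has the fixed basepoint $x$ but the shrinking parameter $\r^*_n\to 0$. Boundedness of the left side together with $D_{\r^*_n}(x)\to 1_G$ only controls the \emph{rate} at which $D_{\r^*}(x)\to 1_G$ as $\r^*\to 0$; it says nothing about the fixed element $D_\r(x)$ you are trying to show is trivial. To force $D_\r(x)\in U$ you need the reverse Bruhat decomposition $v_\r u_\t=u_{\t^*}h^*v_{\r^*}$, which gives $D_{\r^*}\!\bigl(xu_{\t^*}h^*\bigr)=(u_{\t^*}h^*)\inv D_\r(x)(u_{\t^*}h^*)$: now $D_\r(x)$ is \emph{fixed} and conjugated by elements of growing size, while the left side is small along a recurrence sequence. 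Your ``dual argument'' claim that $U$-equivariance makes $D_\r$ constant along $U$-orbits is also false --- your own identity shows $D_\r(xu_\t)\ne D_\r(x)$ in general --- and since $U$ is abelian, conjugation by $u_\t$ cannot kill a residual $U$-component of $D_\r(x)$ at all, so that step needs a genuinely different mechanism.

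The paper does not attempt a soft cocycle argument of this type. It follows \cite[Theorem~6.1, Prop.~6.4]{joinings}: one writes the time-change $v_{e^{-s}\r}u_\t=u_{\beta_\r(\t)}g_\r$ with $g_\r\in AMU^-$, compares $\hat\Up$ along the two $U$-orbits of $x$ and $xv_{e^{-s}\r}$ using simultaneous PS-recurrence to a Lusin set (this is where Lemma~\ref{lemma; new 6.2} and the window estimates enter quantitatively), and then performs an explicit entry-by-entry matrix computation to show the defect $g_s$ satisfies $d(1_G,a_sg_sa_{-s})=O(e^{-s})$, which simultaneously handles the $U^-$, $AM$, and $U$ components. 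The upgrade from a $\bms$-conull to a $\br$-conull conclusion is then obtained by thickening the recurrence set $\Omega_\eta$ by $B_U(r_0)$ and using~(\ref{eqn;reln bt br and bms}). The delicate part you flag at the end --- balancing the $|\t|$-linear blowup on $\mathrm{Lie}(AM)$ against the $|\t|\inv$ decay of $|\r^*|$ --- is exactly what the matrix computation in Step~4 resolves, and it does not factor cleanly into a qualitative conjugation argument.
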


\subsection{Notation}\label{section;rigidity notation}

We provide here a summary of important notation in this section for ease of reference for the reader. This is only a list of notation; full explanations are given in the following section. In particular, the reader may first skip this list and only refer to it when needed.

\subsubsection*{The constant $\ell$, the set-valued map $\Up$, the set $\tilde{X_2}$, and the functions $\up_i$.}\label{ss;ell up tildeX2} Fix $\ell \in \N$. Then $\tilde{X}_2 \subseteq X_2$ is a $U$-invariant and $\br$-conull set, and $$\Up:X_2 \to \{\text{cardinality }\ell\text{ subsets of }X_1\}$$ is a measurable map such that $$\Up(xu)=\Up(x)u$$ for all $u \in U$ and $x \in \tilde{X}_2$. The $\up_i$'s are measurable maps $\up_i:\tilde{X_2} \to X_1$ such that $$\Up(x) = \{\up_1(x),\ldots, \up_\ell(x)\}$$ for all $x \in \tilde{X_2}.$

\subsubsection*{The measure $\mu$} If $\ell \ne 1$, we assume that there exists a $\Delta(U)$-ergodic measure $\mu$ on $X = X_1 \times X_2$ such that \begin{itemize}
	\item if $Z \subseteq X_2$ is $\br$-conull, then $\bigcup\limits_{x_2 \in Z}(\Up(x_2)\times\{x_2\})$ is $\mu$-conull, and
	\item if $W \subseteq X$ is $\mu$-conull, then $\pi_2(W)$ is $\br$-conull.
\end{itemize}

\subsubsection*{The set $\mathcal{P}_{d,m}$}\label{ss;Pdm} For $d,m > 0$, define $\mathcal{P}_{d,m}$ to be the set of functions $\Theta: U \to \R$ of the form $$\Theta(\t) = \min\{|P_1(\t)|^2,\ldots, |P_m(\t)|^2\},$$ where the $P_i : U \to \R$ are polynomials of degree at most $d$.

\subsubsection*{The set $X_2'$ and the constant $\rho_0$}
The set $X_2' \subseteq \tilde{X}_2$ is a $U$-invariant and $\br$-conull set on which there exists a constant $\rho_0>0$ such that for all $x \in X_2'$, $$\big(u \in B_U(\rho_0) \text{ and } \up_i(x)=\up_j(x)u\big) \implies u = 1_G.$$ (See Lemma \ref{lem; rho0 min dist}.)

%\subsubsection*{The set $X_2'$ and the constant $\rho_0$}\label{ss;X2prime rho0} Define $f:\tilde{X_2}\to \R\cup \{+\infty\}$ by $$f(x) = \min\{|\t|:\exists i \ne j \text{ such that } \up_i(x)u_\t = \up_j(x)\}.$$ By $U$-ergodicity, there exists a $U$-invariant $\br$-conull set $X_2' \subseteq \tilde{X}_2$ and a constant $\rho_0 \in \R$ such that $$f(x) \ge \rho_0$$ for all $x \in X_2'$.

\subsubsection*{The functions $\Theta_{x,h,i}$,$\ov{\Theta}_{x,h,i}$, and $q_{i,j}$}\label{ss;theta ovtheta qij}
Define $$\Theta_{x,h,i}(\t) := \min\{1, d(\up_i(x)u_\t, \Up(xh)h\inv u_\t)^2\},$$ $$\ov{\Theta}_{x,h,i}(\t) = \min\{1, d(\up_i(xu_\t), \Up(xh)h\inv u_\t)^2\},$$ and $$q_{i,j}(\t):= \min\{1,d(\up_i(x)u_\t, \up_j(xh)h\inv u_\t)^2\}.$$

\subsubsection*{The constant $\rho$}\label{ss;rho} Let $\rho>0$ be such that if $d(xu_\t, yu_\t)<\rho$, then there exists some finite collection of polynomials $p_i$ of degree at most $n$ such that $$d(xu_\t,yu_\t) = \max_i\{|p_i(\t)|\}.$$

\subsubsection*{The constants $\eps', \xi$ and the set $K$}\label{ss;epsprime xi K} Let $0<\eps'<1/2$ and let $0<\xi<\eps'/10$. $K\subseteq X_2'$ is a compact set with $\bms(K)>1-\xi$ on which every $\up_i$ is uniformly continuous.

\subsubsection*{The constant $C$}\label{ss;C} Let $C$ be the constant from Lemma \ref{lemma; new 6.2} for the compact set $KB_U(1)$, with $d$ and $m$ chosen so that all polynomials arising from the ${\Theta}_{x,h,i}$'s and the $q_{i,j}$'s are elements of $\mathcal{P}_{d,m}$.

\subsubsection*{The constants $C', \alpha$}\label{ss;Cprime alpha} Constants such that for all balls $V\subseteq \R^{n-1}$ and all $\eps>0$, $$\lambda(\{x \in V:|q_{i,j}|<\eps\}) \le C'\left(\frac{\eps}{\sup\limits_V|q_{i,j}|}\right)^\alpha \lambda(V)$$ for all $i, j$, where $\lambda$ is the Lebesgue measure on $U \cong \R^{n-1}$. See equation (\ref{eqn;defn of Calpha good}).
	
\subsubsection*{The constants $R$, $\eps$ and $\tilde{\rho}$}\label{ss;R eps tilderho} Let $R$ be the injectivity radius of $K$. Define $$\eps = \min\left\{1,\frac{1}{20} C \eps'\min\{\rho^2, R^2\}, \frac{1}{100}C\rho_0 (C'\ell)^{-1/\alpha}\right\}$$ and $$\tilde{\rho} = 10\eps/C.$$

\subsubsection*{The constants $\eta, \beta$}\label{ss; eta beta} Let $\beta>0$ be such that for all $x,y \in K$, $$d(x,y)<\beta \implies d(\up_i(x),\up_i(y))<\min\{\eps/2, \tilde{\rho}^{1/2}\}$$ for every $1\le i \le \ell$ and $0<\eta<\eps'$ is such that for all $x \in X_2,$ $$h \in B_{AM}(\eta) \implies d(x,xh) < \min\{\eps/2, \beta ,\tilde{\rho}^{1/2}\}.$$
	
\subsubsection*{The set $P_\xi$}\label{ss; Pxi} Define $$P_\xi = \left\{x \in X_2 : \liminf\limits_{T \to \infty} \dfrac{\ps_x(\{\t \in B_U(T): xu_\t \in K\})}{\ps_x(B_U(T))} \ge 1-2\xi\right\}.$$ It is a $U$-invariant $\br$-conull set.

\subsubsection*{The set $L_{r,K}$}\label{ss;Lrk} Define $$L_{r,K} = \{x \in X_2 : \text{ there exists } t_n \to \infty \text{ such that } xa_{-t_n} \in KB_U(1) \text{ for all } n\}.$$ It is $U$-invariant and $\br$-conull.

\subsubsection*{The set $\hat{X}_h$}\label{ss;hatXh} For $h \in B_{AM}(\eta)$, $$\hat{X}_h:= P_\xi \cap P_\xi h\inv \cap X_2' \cap X_2'h\inv \cap L_{r,K}.$$ It is $U$-invariant and $\br$-conull.

\subsubsection*{The subgroups $A', M'$}\label{ss; Aprime Mprime} Let $A' \subseteq A$ and $M' \subseteq M$ be countable dense subgroups.

\subsection{$U$-equivariant implies $AM$-equivariant}\label{section; AMU equiv}

In this section, we prove Theorem \ref{thm; rigidity; AM part main thm}.

For $d,m > 0$, define $\mathcal{P}_{d,m}$ to be the set of functions $\Theta: U \to \R$ of the form $$\Theta(\t) = \min\{|P_1(\t)|^2,\ldots, |P_m(\t)|^2\},$$ where the $P_i : U \to \R$ are polynomials of degree at most $d$.

The following lemma is critical in adapting to the infinite volume setting. Roughly speaking, it says that PS measure ``sees'' the growth of polynomials: they cannot be ``small'' everywhere within the support of the PS measure, because the PS measure is `friendly' in the sense of \cite{KLW}.

\begin{lemma}
	Fix $d,m > 0$. For any compact set $\Omega$, let $\kappa=\kappa(\Omega)$ be as in Lemma \ref{lem;PS inf kappa for compact set}. Then there exists some $C=C(\Omega,d,m)>0$ satisfying the following: for every $x\in L_{r,\Omega}$ and $T > 0$ such that $xa_{-\log (T/\kappa)}\in \Omega$ and for every $\Theta \in \mathcal{P}_{d,m}$, we have \[\frac{1}{\ps_x(B_U(T))}
	\int_{B_U(T)} \Theta(\t)d\ps_x(\t) \ge C \cdot \sup\limits_{\t \in B_U(T)} \Theta(\t),\] where $L_{r,\Omega} = \{x : \text{there exists } s_n \to \infty \text{ with } xa_{-s_n} \in \Omega\}$.
	\label{lemma; new 6.2}
\end{lemma}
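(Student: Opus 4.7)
My plan is to first rescale the integral using the $A$-action to reduce to a bounded scale, then apply a \emph{friendliness} estimate for the Patterson--Sullivan measure in the sense of \cite{KLW} to control sublevel sets of polynomials, and finally run a union bound over the $m$ polynomials defining $\Theta$.

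For the rescaling step, set $s := \log(T/\kappa)$ and $x' := xa_{-s}\in \Omega$. Using $a_s u_\t a_{-s} = u_{\t e^{-s}}$ we get $a_s B_U(T) a_{-s} = B_U(\kappa)$, and the conformal transformation rule $\ps_g(E) = e^{\delta_\Gamma s}\ps_{ga_{-s}}(a_s E a_{-s})$ extends to integrals by standard measure theory. The resulting change of variables $\t = (T/\kappa)\s$ converts the normalized integral on $B_U(T)$ against $\ps_x$ into the normalized integral on $B_U(\kappa)$ against $\ps_{x'}$. Since polynomials of degree $\le d$ are preserved under this linear scaling, $\Theta'(\s) := \Theta((T/\kappa)\s)$ again belongs to $\mathcal{P}_{d,m}$ and satisfies $\sup_{B_U(\kappa)}\Theta' = \sup_{B_U(T)}\Theta$. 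Thus it suffices to prove the bound at the fixed scale: for every $x'\in \Omega$ and every $\Theta'\in\mathcal{P}_{d,m}$,
\[
\frac{1}{\ps_{x'}(B_U(\kappa))}\int_{B_U(\kappa)}\Theta'(\s)\,d\ps_{x'}(\s) \;\ge\; C\,\sup_{B_U(\kappa)}\Theta',
\]
with $C$ depending only on $\Omega,d,m$; note that $\ps_{x'}(B_U(\kappa))$ is uniformly bounded away from $0$ and $\infty$ on $\Omega$ by Lemma \ref{lem;PS inf kappa for compact set}.

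For the friendliness step, I would invoke the fact that PS measure for geometrically finite $\Gamma$ is absolutely friendly on compact sets, so there exist $C_0 = C_0(\Omega,d)$ and $\alpha_0 = \alpha_0(\Omega,d)$ such that
\[
\ps_{x'}\bigl(\{\s \in B_U(\kappa) : |P(\s)| < \epsilon \sup\nolimits_{B_U(\kappa)} |P|\}\bigr) \;\le\; C_0\,\epsilon^{\alpha_0}\,\ps_{x'}(B_U(\kappa))
\]
for every polynomial $P$ on $U$ of degree at most $d$, every $\epsilon>0$, and every $x' \in \Omega$. Writing $\Theta' = \min_{1 \le i \le m}|P_i|^2$, set $M := \sup_{B_U(\kappa)}\Theta'$ and assume $M>0$ (the case $M=0$ is trivial). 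At any point where $\Theta'$ attains $M$, we have $|P_i|^2 \ge M$ for every $i$, so $\sup_{B_U(\kappa)}|P_i| \ge \sqrt{M}$ for every $i$. A union bound then yields
\[
\ps_{x'}(\Theta' < \epsilon^2 M) \;\le\; \sum_{i=1}^m \ps_{x'}\bigl(|P_i| < \epsilon\sqrt{M}\bigr) \;\le\; m C_0\,\epsilon^{\alpha_0}\,\ps_{x'}(B_U(\kappa)).
\]
Choosing $\epsilon_0$ so that $m C_0 \epsilon_0^{\alpha_0} = 1/2$ gives $\ps_{x'}(\Theta' \ge \epsilon_0^2 M) \ge \tfrac12 \ps_{x'}(B_U(\kappa))$, hence $\int \Theta'\,d\ps_{x'} \ge (\epsilon_0^2/2)\,M\,\ps_{x'}(B_U(\kappa))$, and $C := \epsilon_0^2/2$ works.

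The step I expect to be the main obstacle is establishing the friendliness inequality with constants uniform in the base point $x' \in \Omega$. The estimate is classical for convex cocompact groups, but in the geometrically finite setting one must be careful near cusps and verify that $C_0,\alpha_0$ depend on $P$ only through its degree bound. Citing or adapting the absolute-decay/friendliness results of \cite{KLW}, together with the known verification that the PS measure of a geometrically finite $\Gamma$ fits into that framework, should be the crucial black box; once this is in place, the remainder of the argument is the bookkeeping outlined above.
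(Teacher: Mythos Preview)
Your rescaling step matches the paper's, but the core argument diverges. The paper does \emph{not} invoke the quantitative friendliness inequality; instead it runs a soft compactness-and-contradiction argument: after rescaling to $B_U(\kappa)$ and normalizing so that $\sup\tilde{\Theta}_i=1$, it takes subsequential limits $y_i\to y\in\Omega$ and $\tilde{\Theta}_i\to\tilde{\Theta}$ (equicontinuity of bounded-degree polynomials), uses continuity of $g\mapsto\ps_g$ (Lemma~\ref{lem;new cty ps measure}) together with the uniform bounds of Lemma~\ref{lem;PS inf kappa for compact set} to conclude $\int_{B_U(\kappa)}\tilde{\Theta}\,d\ps_y=0$, and then derives a contradiction from Lemma~\ref{lem; vars null sets, cty of g to psg} (proper subvarieties are $\ps$-null). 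This is entirely self-contained within the paper's preliminaries and sidesteps exactly the issue you flag as your main obstacle---uniformity of the friendliness constants over $x'\in\Omega$. Your route is genuinely different and more quantitative: it would yield an explicit $C$ in terms of the decay exponent $\alpha_0$ and $m$, whereas the paper's argument is purely existential. The trade-off is that your black box (absolute decay of PS measure for geometrically finite $\Gamma$, uniformly over a compact set of base points) is a nontrivial external input---true, and in the spirit of \cite{KLW} as the paper itself remarks informally before the lemma, but not something the paper establishes or cites in a directly usable form. If you want a clean, citable proof, the paper's compactness argument is shorter and needs nothing beyond Lemmas~\ref{lem; vars null sets, cty of g to psg}--\ref{lem;PS inf kappa for compact set}.
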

\begin{proof}
	Observe that for any $\kappa, T>0$, \begin{align*}
	&\frac{1}{\ps_x(B_U(T))} \int_{B_U(T)}\Theta(\t)d\ps_x(\t)\\
	&= \frac{1}{\ps_x(B_U(T))} \int_{B_U(\kappa)} \Theta((T/\kappa)\t) d\ps_x((T/\kappa)\t) \\
	&=\frac{1}{\ps_{xa_{-\log(T/\kappa)}}(B_U(\kappa))}\int_{B_U(\kappa)}\tilde{\Theta}(\t)d\ps_{xa_{-log(T/\kappa)}}(\t)
	\end{align*}
	
	Now, assume for contradiction that the claim is false. Then, by scaling the $\Theta$'s if necessary, we may assume that we have: \begin{itemize}
		\item sequences $x_i \in L_{r,\Omega}$, $s_i \to \infty$ such that $y_i = x_i a_{-\log{(s_i/\kappa)}} \in \Omega$
		\item $\tilde{\Theta}_i \in \mathcal{P}_{d,m}$ with $\sup\limits_{B_U(\kappa)}\tilde{\Theta}_i(\t)=1$
	\end{itemize} satisfying $\frac{1}{\ps_{y_i}(B_U(\kappa))} \int_{B_U(\kappa)} \tilde{\Theta}_i(\t)d\ps_{y_i}(\t) \to 0$ as $i \to \infty$.
	
	Since the $\tilde{\Theta}_i$'s are given by uniformly bounded polynomials of bounded degree, they form an equicontinuous family. Thus, by dropping to a subsequence if necessary, we may assume that there exists $y \in \Omega$ and $\tilde{\Theta} \in \mathcal{P}_{d,m}$ with $\sup\limits_{\t \in B_U(\kappa)}\tilde{\Theta}(\t)=1$ such that $y_i \to y$ and $\tilde{\Theta}_i \to \tilde{\Theta}$.
	
	Since $g \mapsto \ps_g$ is continuous by Lemma \ref{lem;new cty ps measure} and $\sup\limits_{g\in \Omega}\ps_g(B_U(\kappa))<\infty$ by Lemma \ref{lem;PS inf kappa for compact set}, we then have that $$\int_{B_U(\kappa)}\tilde{\Theta}(\t)d\ps_y(\t) = 0.$$ 
	
	Thus, $$\ps_y(B_U(\kappa)\cap \{\t:\tilde{\Theta}(\t)\ne 0\})=0.$$ Since $\ps_y(B_U(\kappa))>0$ by definition of $\kappa$, this implies $$\ps_y(B_U(\kappa)\cap\{\t:\tilde{\Theta}(\t)= 0\})>0,$$ a contradiction to Lemma \ref{lem; vars null sets, cty of g to psg}.
\end{proof}

Recall the setup: $\ell \in \N$ and $$\Up:X_2 \to \{\text{cardinality } \ell \text{ subsets of } X_1\}$$ is such that there exists a $U$-invariant $\br$-conull set $\tilde{X}_2 \subseteq X_2$ with $$\Up(xu) = \Up(x)u$$ for all $x \in \tilde{X}_2$ and $u \in U$. There are measurable maps $\up_i : \tilde{X}_2 \to X_1$ such that $$\Up(x) = \{\up_1(x), \ldots, \up_\ell(x)\}$$ for all $x \in \tilde{X}_2$.

\begin{lemma}
	There exists a $U$-invariant $\br$-conull set $X_2' \subseteq \tilde{X}_2$ and a constant $\rho_0>0$ such that for all $x \in X_2'$ and all $1 \le i,j\le \ell$, $$\big(u \in B_U(\rho_0) \text{ and } \up_i(x)=\up_j(x)u\big) \implies u = 1_G.$$ That is, there is a positive minimum distance in the $U$ direction within $\Up(x)$.
	\label{lem; rho0 min dist}
\end{lemma}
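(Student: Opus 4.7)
The plan is to exploit $U$-ergodicity of $\br$ on $X_2$ (which holds by Zariski density of $\Gamma_2$, \cite{winter}) together with discreteness of $U$-stabilizers of points in $X_1$. First I would avoid the measurable labelling $\up_1,\dots,\up_\ell$ entirely and work instead with the unordered set $\Up(x)$: for each $\rho>0$, define
$$\tilde E_\rho := \{x \in \tilde X_2 : \text{there exist } y_1, y_2 \in \Up(x) \text{ and } u \in B_U(\rho)\setminus\{1_G\} \text{ with } y_1 = y_2 u\}.$$
Because $\Up$ is $U$-equivariant on $\tilde X_2$ and $U\cong\R^{n-1}$ is abelian, $\tilde E_\rho$ is $U$-invariant: if $y_1=y_2u$ with $y_i \in \Up(x)$, then for any $u' \in U$ we have $y_iu' \in \Up(xu')$ and $y_1u' = y_2 uu' = (y_2u')u$, so the same $u$ witnesses $xu' \in \tilde E_\rho$. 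By $U$-ergodicity of $\br$, each $\tilde E_\rho$ is either $\br$-null or $\br$-conull.

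The heart of the argument is to show that $\tilde E_{\rho_0}$ is $\br$-null for some $\rho_0>0$. Suppose not; then $\tilde E_\rho$ is $\br$-conull for every $\rho>0$, so $\bigcap_{n\ge 1}\tilde E_{1/n}$ is $\br$-conull. For any $x$ in this intersection, pick witnesses $y_1^{(n)},y_2^{(n)}\in \Up(x)$ and $u_n \in B_U(1/n)\setminus\{1_G\}$ with $y_1^{(n)}=y_2^{(n)}u_n$. Since $|\Up(x)|=\ell<\infty$, pigeonhole lets me pass to a subsequence on which $y_1^{(n)}=y_1$ and $y_2^{(n)}=y_2$ are constant. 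Then $u_n u_m^{-1} \in \stab_U(y_2)$ for all $n,m$. Writing $y_2 = \Gamma_1 g$, we have $\stab_U(y_2) = g^{-1}\Gamma_1 g \cap U$, a closed discrete subgroup of $U$. As $n\to\infty$, $u_nu_m^{-1}\to u_m^{-1}$, and closedness plus discreteness of $\stab_U(y_2)$ forces this convergent sequence to be eventually equal to $u_m^{-1}$, so $u_n=1_G$ for all large $n$, contradicting $u_n\ne 1_G$.

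With $\rho_0$ in hand I set $X_2' := \tilde X_2 \setminus \tilde E_{\rho_0}$, which is $U$-invariant and $\br$-conull. If $x \in X_2'$ and $\up_i(x) = \up_j(x)u$ for some $u \in B_U(\rho_0)$, then taking $y_1 = \up_i(x)$, $y_2 = \up_j(x) \in \Up(x)$ shows that we cannot have $u \ne 1_G$, so $u = 1_G$ as required.

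The main obstacle I anticipate is precisely the one that forces working with the set $\Up(x)$ rather than the coordinate maps $\up_i$: the measurable labelling need not be $U$-equivariant (the assignment $i \mapsto $ ``which branch survives under translation by $u'$'' can permute). By packaging everything through the $U$-equivariant unordered set $\Up(x)$, the invariance of $\tilde E_\rho$ becomes automatic, and the pigeonhole over the finite set $\Up(x)\times\Up(x)$ is exactly what bridges the ergodicity dichotomy to the discreteness contradiction. Measurability of $\tilde E_\rho$ is a routine check from measurability of the $\up_i$'s and the Borel character of $\{(y_1,y_2)\in X_1\times X_1 : y_1 \in y_2(B_U(\rho)\setminus\{1_G\})\}$.
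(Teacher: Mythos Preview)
Your argument is correct and follows essentially the same approach as the paper: both proofs exploit $U$-equivariance of $\Up$ to obtain a $U$-invariant object (the paper uses the function $f(x)=\min\{|\t|>0:\exists\,i,j,\ \up_i(x)=\up_j(x)u_\t\}$, you use its sublevel sets $\tilde E_\rho$), then apply $U$-ergodicity of $\br$. Your version spells out via pigeonhole and discreteness of $\stab_U(y_2)$ why the constant value of $f$ is strictly positive, a point the paper leaves implicit in the phrase ``positive by definition of $f$''.
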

\begin{proof}
	Define $f:\tilde{X}_2 \to \R \cup\{+\infty\}$ by $$f(x) = \min \{|\t|>0:\exists 1\le i,j \le \ell \text{ such that } \up_i(x) = \up_j(x)u_{\t}\}.$$ Suppose that $\up_i(x) = \up_j(x)u_\t$ for some $i,j$ and $u_\t \in U, \t\ne 0$. By $U$-equivariance of $\Up$, for any $u \in U$, there exist $i',j'$ such that $$\up_i(x)u = \up_{i'}(xu), \text{ and } \up_j(x)u=\up_{j'}(xu).$$ Thus, $$\up_{i'}(xu) = \up_{j'}(xu)u_\t,$$ so $f(xu) \le f(x)$. Swapping the roles of $x$ and $xu$ shows that $f(x)=f(xu)$. 
	
	Hence, by the ergodicity of $\br$, there exists a $\br$-conull set $X_2' \subseteq \tilde{X}_2$ on which $f$ is constant. If $f \equiv +\infty$ on $X_2'$, then define $\rho_0 = 1$; otherwise, let $\rho_0$ be the value of $f$ on $X_2'$. It is positive by definition of $f$.
\end{proof}

Restricting to $x \in X_2'$, where $X_2'$ is as in Lemma \ref{lem; rho0 min dist}, is necessary to ensure the uniqueness of $\btau_h$ in Proposition \ref{prop;existence of tau}.

For $x \in X_2', h \in AM,$ and $1 \le i \le \ell$, define $$\Theta_{x,h,i}(\t) := \min\{1, d(\up_i(x)u_\t, \Up(xh)h\inv u_\t)^2\}$$ and $$\ov{\Theta}_{x,h,i}(\t) = \min\{1, d(\up_i(xu_\t), \Up(xh)h\inv u_\t)^2\}.$$ We will primarily work with $\Theta_{x,h,i}$, but $\ov{\Theta}_{x,h,i}$ comes into play when we use the $U$-equivariance of $\Up$. Also define $$q_{i,j}(\t):= \min\{1,d(\up_i(x)u_\t, \up_j(xh)h\inv u_\t)^2\}.$$ The main idea of the proof is to show that $q_{i,j}(\t)$ stays bounded as $|\t|\to \infty$, showing that the points $\up_i(x), \up_j(xh)h\inv$ stay in the same $U$ orbit.

The following lemma is well known by the polynomial divergence of $U$ orbits. Recall from (\ref{def; def of d}) that $$d(\Gamma x, \Gamma y):=\min\{\|g-1_G\| : g \in G, \Gamma x = \Gamma y g\},$$ where $\|\cdot\|$ denotes the max norm.

\begin{lemma}
	There exists $\rho>0$ such that if $d(xu_\t, yu_\t) < \rho$, then there exists some finite collection of polynomials $p_i$ of degree at most $n$ such that $$d(xu_\t, yu_\t) = \max\limits_i \{|p_i(\t)|\}.$$
\end{lemma}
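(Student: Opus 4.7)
The plan is to exploit the polynomial nature of conjugation by $U$ inside the ambient matrix group. Since $G \subset \operatorname{SL}_{n+1}(\R)$ and the parametrization of $U$ exhibits each $u_\t$ as a matrix with entries polynomial in $\t \in \R^{n-1}$ of degree at most $2$, for any fixed $h \in G$ the matrix entries of $u_{-\t}\, h\, u_\t$ are polynomials in $\t$ of bounded degree (at most $n$ under the conventions in play, or more transparently, at most $4$, coming from the sandwich of two quadratic $u$'s around a constant). This is the only structural fact about $G$ I would need.

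I would begin by choosing $\rho>0$ purely from local Lie-group data at $1_G$: small enough that $\{g \in G : \|g-1_G\| < 2\rho\}$ is contained in a normal neighborhood of the identity, so that the max norm is a bona fide local chart near $1_G$ and inversion is well-behaved. Given $x,y \in X_2$ and $\t \in \R^{n-1}$ with $d(xu_\t, yu_\t) < \rho$, I would fix lifts $\tilde x, \tilde y \in G$ of $x,y$, and use the definition (\ref{def; def of d}) together with the discreteness of $\Gamma$ to select $\gamma \in \Gamma$ realizing the minimum. Setting $h := \tilde y\inv \gamma \tilde x$, the element $g := u_{-\t}\, h\, u_\t$ then satisfies $\Gamma x u_\t = \Gamma y u_\t g$ and $\|g-1_G\| = d(xu_\t, yu_\t)$.

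The conclusion now follows formally: the entries of the matrix $u_{-\t}\, h\, u_\t - 1_G$ are polynomials $p_{ij}(\t)$ in $\t$ of bounded degree, and because the max norm on $G$ is the maximum absolute value of matrix entries,
$$d(xu_\t, yu_\t) = \|u_{-\t}\, h\, u_\t - 1_G\| = \max_{i,j}|p_{ij}(\t)|.$$
The finite family $\{p_{ij}\}$ furnishes the desired polynomials. The main subtlety I would flag is that $\gamma$, and hence the polynomial family $\{p_{ij}\}$, depends on the specific $\t$ under consideration and can in principle jump with $\t$. Since the lemma only asserts pointwise existence of the polynomials for each $\t$ with $d(xu_\t,yu_\t)<\rho$, this is harmless for the statement itself; however, for downstream applications where a single polynomial family is desired on a connected region of $\t$, I would shrink $\rho$ further so that two distinct $\gamma$'s cannot both yield conjugates of norm $<\rho$ by discreteness of $\Gamma$, giving a locally constant choice of $\gamma$. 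I expect this uniqueness bookkeeping, rather than the polynomial-matrix calculation itself, to be the only point requiring care.
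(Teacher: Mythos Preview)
Your proposal is correct; the paper does not actually supply a proof here, simply introducing the lemma with the sentence ``The following lemma is well known by the polynomial divergence of $U$ orbits,'' and your argument is precisely the standard unpacking of that phrase. Your observations that the degree is really at most $4$ (the paper's bound ``$n$'' is just a convenient uniform constant, not sharp) and that one must shrink $\rho$ via discreteness of $\Gamma$ to keep the minimizing $\gamma$ locally constant are exactly the two points a careful write-up would need, and both are handled correctly.
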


Note that the $\Theta_{x,h,i}$'s and $q_{i,j}$'s can be controlled by polynomials in this sense, but not necessarily the $\ov{\Theta}_{x,h,i}$'s, since the $u_\t$'s are inside the $\up_i$'s here.

Let $0<\eps'<1/2$ and $0<\xi<\eps'/10$. By Lusin's theorem, there exists a compact set $K \subseteq X_2'$ with $$\bms(K)>1-\xi$$ on which every $\up_i$ is uniformly continuous. %Note that the use of $\bms$ instead of $\br$ will not cause difficulties, as we will only use $K$ to define the $\br$-conull set $P_\xi$ in (\ref{eqn; defn Pxi}), and from then on use $P_\xi$ in our proofs. 
Let $d,m>0$ be such that all of the polynomials arising from the $\Theta_{x,h,i}$'s and $q_{i,j}$'s are elements of $\mathcal{P}_{d,m}$, and let $$0<C = C(KB_U(1), d,m) \text{ as in Lemma } \ref{lemma; new 6.2}.$$

Recall that polynomials are $(C',\alpha)$ good on $\R^{n-1}$ \cite{KleinbockMargulis, KLW}: there exist constants $C',\alpha$ that depend only on the degree of the polynomial $f$ and the dimension of the space such that for all balls $V$ and all $\eps>0$, \begin{equation}
\lambda(\{x\in V :|f(x)|<\eps\}) \le C' \left(\frac{\eps}{\sup\limits_{V}|f|}\right)^\alpha \lambda(V),\label{eqn;defn of Calpha good}\end{equation} where $\lambda$ denotes the Lebesgue measure.

Choose $C'$ and $\alpha$ such that (\ref{eqn;defn of Calpha good}) holds when $f = q_{i,j}$ for all $i,j$. Let $R$ be the injectivity radius of $K$, let $\rho_0$ be as in Lemma \ref{lem; rho0 min dist}, and define \begin{equation}\label{eqn; defn eps}\eps = \min\left\{1,\frac{1}{20} C \eps'\min\{\rho^2, R^2\}, \frac{1}{100}C\rho_0 (C'\ell)^{-1/\alpha}\right\}\end{equation} and \begin{equation}\tilde{\rho} = 10\eps/C.\label{eqn; defn tilderho}\end{equation} We remark that these have been defined to achieve three things: \begin{itemize}
	\item the $\Theta_{x,h,i}$'s and $q_{i,j}$'s will be controlled by polynomials and we will stay within the injectivity radius throughout our arguments;
	\item the definition of $\eps$, together with Corollary \ref{cor;rigidity;leq 5eps}, will give a contradiction in the proof of Lemma \ref{lem;theta leq tilderho}; and
	\item $\ov\eps: = (C'\ell)^{1/\alpha}\tilde{\rho}$, which will arise in the proof of Proposition \ref{prop;existence of tau}, is less than $\frac{1}{10}\rho_0$, giving uniqueness of $\btau_h$ in that proof.
\end{itemize}

Now, let $\beta>0$ be such that for all $x,y \in K$, $$d(x,y)<\beta \implies d(\up_i(x),\up_i(y))<\min\{\eps/2, \tilde{\rho}^{1/2}\}$$ for every $1\le i \le \ell$ and let $0<\eta<\eps'$ be such that for all $x \in X_2,$ \begin{equation} h \in B_{AM}(\eta) \implies d(x,xh) < \min\{\eps/2, \beta ,\tilde{\rho}^{1/2}\}.\label{eqn;defn of eta}\end{equation}

Define \begin{equation}
\label{eqn; defn Pxi}P_\xi = \left\{x \in X_2 : \liminf\limits_{T \to \infty} \dfrac{\ps_x(\{\t \in B_U(T): xu_\t \in K\})}{\ps_x(B_U(T))} \ge 1-2\xi\right\}.\end{equation} By (\ref{eqn;reln bt br and bms}), it is a $\br$-conull set because it is $U$-invariant and has $\bms(P_\xi)=1$, \cite[Theorem 17]{rudolph}. The following lemma will allow us to control the $\Theta_{x,h,i}$'s by understanding $P_\xi$.

For $r>0$, let $$B_{AM}(r) := B(r) \cap AM,$$ where we recall that $B(r)$ denotes the ball of radius $r$ in $G$ using the max norm.

\begin{lemma}
	If $x \in P_\xi \cap P_\xi h\inv$ for $h \in B_{AM}(\eta)$, then there exists $T_1 = T_1(x,h)$ such that for all $T \ge T_1$,  $$\frac{\ps_{x}(\{\t \in B_U(T) : xu_\t, xu_\t h \in K\})}{\ps_x(B_U(T))} \ge 1- 6\xi.$$
	\label{lem;rigidity;simultaneous returns to K}
\end{lemma}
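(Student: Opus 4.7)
The plan is to handle the two containment conditions $xu_\t \in K$ and $xu_\t h \in K$ separately, then combine by a union bound. The first condition follows immediately from $x \in P_\xi$: the $\liminf$ in the definition being at least $1 - 2\xi$ yields some $T_1^{(1)} = T_1^{(1)}(x)$ such that for every $T \ge T_1^{(1)}$,
\[
\ps_x(\{\t \in B_U(T): xu_\t \in K\}) \ge (1 - 3\xi)\,\ps_x(B_U(T)).
\]

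The heart of the argument is the second condition. I would exploit that $AM$ normalizes $U$: writing $h = a_s m$ with $|s|, \|m - I\| = O(\eta)$, the identities $a_{-s} u_\t a_s = u_{\t e^s}$ and $m\inv u_\t m = u_{\t m}$ give $u_\t h = h \cdot u_{\phi_h(\t)}$, where $\phi_h(\t) = \t e^s m$ is a linear bijection of $U \cong \R^{n-1}$ that is bi-Lipschitz with constants close to $1$ (so $\phi_h(B_U(T))$ is sandwiched between comparable Euclidean balls). Consequently $xu_\t h = (xh)\, u_{\phi_h(\t)}$, and setting $y := xh$,
\[
\{\t \in B_U(T): xu_\t h \in K\} = \phi_h\inv\bigl(\{\sigma \in \phi_h(B_U(T)): y u_\sigma \in K\}\bigr).
\]
The hypothesis $x \in P_\xi h\inv$ gives $y \in P_\xi$, which provides the desired high $\ps_y$-density of the inner set for $T$ large.

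The main obstacle is transferring this $\ps_y$-density estimate into a $\ps_x$-density estimate on the preimage. The key observation is that $h \in AM$ fixes the forward endpoint $w_0^+$ on $\partial\H^n$ (since $A$ preserves the geodesic direction and $M$ centralizes $A$), so $(xu_\t h)^+ = (xu_\t)^+$; hence $\ps_x$ and the pullback of $\ps_y$ via $\phi_h$ are both built from the same boundary measure $\nu_o$ evaluated at the same boundary points, differing only by a Busemann factor of the form $e^{\delta_\Gamma \beta_{(xu_\t)^+}(xu_\t\cdot o,\, xu_\t h\cdot o)}$, which is $1 + O(\eta)$ for $\eta$ small. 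Combined with the bi-Lipschitz comparison of $\phi_h$, this yields, for $T$ sufficiently large depending on $(x,h)$,
\[
\ps_x(\{\t \in B_U(T): xu_\t h \in K\}) \ge (1 - 3\xi)\,\ps_x(B_U(T)),
\]
after absorbing the small Radon--Nikodym and bi-Lipschitz corrections into the constant (using that $\eta$ can be chosen small relative to $\xi$). A union bound on the complements of the two sets then gives the desired
\[
\ps_x(\{\t \in B_U(T): xu_\t \in K \text{ and } xu_\t h \in K\}) \ge (1 - 6\xi)\,\ps_x(B_U(T))
\]
for all $T \ge T_1 := \max(T_1^{(1)}, T_1^{(2)})$.
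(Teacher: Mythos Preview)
Your proposal is correct and takes essentially the same approach as the paper: handle $xu_\t \in K$ and $xu_\t h \in K$ separately from $x \in P_\xi$ and $xh \in P_\xi$, transfer the second via the conjugation $u_\t h = h\, u_{\phi_h(\t)}$ together with the conformal transformation behavior of PS measure, then take a union bound on the complements.

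One sharpening worth noting: your Busemann factor $e^{\delta_\Gamma \beta_{(xu_\t)^+}(xu_\t\cdot o,\; xu_\t h\cdot o)}$ is not merely $1+O(\eta)$ but is in fact the \emph{constant} $e^{\delta_\Gamma s}$, independent of $\t$ (since $m\in K$ gives $ho=a_so$, and $\beta_{w_0^+}(o,a_so)=s$ by $G$-equivariance of the Busemann cocycle). Because the Radon--Nikodym derivative is constant, densities transfer \emph{exactly} under $\phi_h$, so no assumption that $\eta$ is small relative to $\xi$ is needed for this step; the paper exploits this by invoking the exact identities $\ps_{xa_s}(E)=e^{\delta_\Gamma s}\ps_x(a_sEa_{-s})$ and $\ps_{xm}(E)=\ps_x(mEm^{-1})$ directly. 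The only genuine loss comes from the comparison between $\phi_h(B_U(T))$ and a round max-norm ball, which both you and the paper absorb into the slack between $1-2\xi$ and $1-3\xi$.
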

\begin{proof}
	Since $xh \in P_\xi$, there exists $T_0>0$ such that for all $T \ge T_0$, \[\frac{\ps_{xh}(\{t \in B_U(T) : xhu_\t \in K\})}{\ps_{xh}(B_U(T))} \ge 1-3\xi.\]
	
	Write $h = a_s m$. Recall that $\ps_{ya_s}(E) = e^{\delta_\Gamma s}\ps_y(a_s E a_{-s})$, $mu_\t m\inv = u_{\t m}$, and $a_s u_\t a_{-s} = u_{\t e^{-s}}$. Note also that $\ps_{ym}(E) = \ps_{y}(m E m\inv)$. Using these, we have \begin{align*}
	&\ps_{xh}(\{\t \in B_U(T):xhu_\t \in K\}) \\
	&= e^{\delta_\Gamma s} \ps_{xm}(\{\t \in B_U(e^{-s}T) : xmu_\t a_s \in K\}) \\
	&= e^{\delta_\Gamma s}\ps_x(\{\t m : \t \in B_U(e^{-s}T), xmu_\t a_s \in K\}) \\
	&\le e^{\delta_\Gamma s} \ps_x(\{\t' \in B_U((\sqrt{n-1})e^{-s}T) : xu_{\t'}  ma_s \in K\})
	\end{align*}
	Where the last line is because we are using the max norm on $U$, not the Euclidean norm. Similarly, \begin{align*}
	\ps_{xh}(B_U(T)) &= e^{\delta_\Gamma s} \ps_{xm}(B_U(e^{-s}T)) \\
	&= e^{\delta_\Gamma s} \ps_x(B_U((\sqrt{n-1})e^{-s}T))
	\end{align*}

	Putting this together, we conclude that for all $T \ge \max\{(\sqrt{n-1})e^{-s}T_0, T_0\}$, \begin{equation}
	\ps_{x}(\{\t \in B_U(T) : xu_\t h \in K\}) \ge (1-3\xi) \ps_x(B_U(T))
	\label{eqn; avg geq 1-2xi for xh}
	\end{equation}
	
	Since $x \in P_\xi$, we can choose $T_1 \ge \max\{(\sqrt{n-1})e^{-s}T_0, T_0\}$ so that for all $T \ge T_1$, \begin{equation}
	\ps_x(\{\t \in B_U(T) : xu_\t \in K\}) \ge (1-3\xi) \ps_x(B_U(T))
	\label{eqn; avg geq 1-2xi for x}
	\end{equation}
	
	Intersecting the sets on the left hand sides of equations (\ref{eqn; avg geq 1-2xi for xh}) and (\ref{eqn; avg geq 1-2xi for x}) yields the claim.
\end{proof}

Define $$L_{r,K} = \{x \in X_2 : \text{ there exists } t_n \to \infty \text{ such that } xa_{-t_n} \in KB_U(1) \text{ for all } n\}.$$ By Poincar\'e recurrence and ergodicity of $A$, $\bms(L_{r,K})=1$. It is also $U$-invariant, hence $\br$-conull by (\ref{eqn;reln bt br and bms}). Staying within this set will be necessary for our applications of Lemma \ref{lemma; new 6.2} throughout this section.

For $h \in B_{AM}(\eta)$, define \begin{equation}\hat{X}_h:= P_\xi \cap P_\xi h\inv \cap X_2' \cap X_2'h\inv \cap L_{r,K}.\label{eqn; defn hatXh}\end{equation} The set $\hat{X}_h$ is also $U$-invariant and $\br$-conull. We will show that Proposition \ref{prop;existence of tau} holds with this $\hat{X}_h$. Recall that by the definitions of $P_\xi$ and $X_2'$, if $x \in \hat{X}_h$, this means that both $x$ and $xh$ have many returns to $K$ under $U$ and $\Up$ is $U$-equivariant at both $x$ and $xh$.

\begin{corollary}
	If $x \in \hat{X}_h$ with $h \in B_{AM}(\eta)$ and $T_1$ is as in Lemma \ref{lem;rigidity;simultaneous returns to K}, then for all $T \ge T_1$ and for all $1 \le i \le \ell$, \[\frac{1}{\ps_x(B_U(T))}\int_{B_U(T)} {\Theta}_{x,h,i}(\t)d\ps_{x}(\t) \le \eps^2 +6\xi \le 5\eps.\]
	\label{cor;rigidity;leq 5eps}
\end{corollary}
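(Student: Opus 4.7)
My plan is to split the integral over $B_U(T)$ according to whether both $xu_\t$ and $xu_\t h$ lie in the compact set $K$, giving a pointwise bound $\Theta_{x,h,i}(\t)\le\eps^2$ on the good piece and using only the trivial bound $\Theta_{x,h,i}(\t)\le 1$ on the complement. Specifically, I would set
\[E = \{\t \in B_U(T) : xu_\t \in K \text{ and } xu_\t h \in K\}\]
and apply Lemma \ref{lem;rigidity;simultaneous returns to K} (valid because $x \in P_\xi \cap P_\xi h\inv \subseteq \hat X_h$) to obtain $\ps_x(B_U(T)\setminus E) \le 6\xi\,\ps_x(B_U(T))$ for $T\ge T_1$.

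For $\t \in E$ I would extract a good comparison point in $\Up(xh)h\inv u_\t$ using $U$-equivariance at both $x$ and $xh$. Since $x \in X_2' \subseteq \tilde X_2$, $U$-equivariance at $x$ yields an index $\sigma(\t)$ with $\up_i(x)u_\t = \up_{\sigma(\t)}(xu_\t)$. Since $xh \in X_2'$ as well (because $x \in X_2'h\inv$), the commutation identity $h\inv u_\t h = u_{\t e^s m}$ for $h = a_s m$ combined with $U$-equivariance at $xh$ gives
\[\Up(xh)h\inv u_\t = \Up(xh)u_{\t e^s m}h\inv = \Up(xu_\t h)h\inv,\]
so the particular element $\up_{\sigma(\t)}(xu_\t h)h\inv$ lies in $\Up(xh)h\inv u_\t$ and
\[d(\up_i(x)u_\t, \Up(xh)h\inv u_\t) \le d(\up_{\sigma(\t)}(xu_\t), \up_{\sigma(\t)}(xu_\t h)h\inv).\]
I would then apply the triangle inequality through $\up_{\sigma(\t)}(xu_\t h)$: the first leg is bounded by $\eps/2$ via uniform continuity of $\up_{\sigma(\t)}$ on $K$ (using $xu_\t, xu_\t h \in K$ and $d(xu_\t, xu_\t h)<\beta$), and the second leg is bounded by $\eps/2$ via the defining property of $\eta$ applied in $X_1$. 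Squaring yields $\Theta_{x,h,i}(\t)\le\eps^2$ on $E$, and combining with the trivial bound on $B_U(T)\setminus E$ produces $\int_{B_U(T)}\Theta_{x,h,i}\,d\ps_x \le (\eps^2+6\xi)\ps_x(B_U(T))$. The secondary inequality $\eps^2+6\xi\le 5\eps$ is elementary, using $\eps\le 1$ and a sufficiently small choice of $\xi$ relative to $\eps$.

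The crux of the argument is the commutation identity that reduces the distance between $\up_i(x)u_\t$ and the set $\Up(xh)h\inv u_\t$ to the distance between $\up_{\sigma(\t)}(xu_\t)$ and $\up_{\sigma(\t)}(xu_\t h)h\inv$; once this is in place, everything else is a uniform continuity estimate using the choice of $\beta$ and $\eta$. The only bookkeeping subtlety I expect is ensuring that the same measurable index $\sigma(\t)$ appears on both legs of the triangle inequality, which follows from the fact that the $\up_i$'s are a fixed measurable labelling of the sets $\Up(\cdot)$; a minor technicality is that $d(q, qh\inv)$ in $X_1$ needs $B_{AM}(\eta)$ to be symmetric, which can be arranged by shrinking $\eta$ if necessary.
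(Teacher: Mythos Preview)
Your proof is correct and follows essentially the same approach as the paper's. The paper's argument is more compressed: it defines the same set $D(T)=E$, invokes Lemma~\ref{lem;rigidity;simultaneous returns to K} for the complement, and on $D(T)$ simply asserts that pointwise $\Theta_{x,h,i}(\t)=\ov\Theta_{x,h,j(\t)}(\t)$ for some index $j(\t)$ (your $\sigma(\t)$), saying the bound for $\ov\Theta$ is then ``clear by definition of $K$ and choice of $\eta$''; your commutation identity $\Up(xh)h^{-1}u_\t=\Up(xu_\t h)h^{-1}$ and the subsequent triangle inequality are precisely the details that unpack this assertion.
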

\begin{proof}
	Let $D(T):= \{\t \in B_U(T) : xu_\t, xu_\t h\in K\}$. On $D(T)$, ${\Theta}_{x,h,i}(\t)<\eps^2$ because, pointwise, there exists $j(\t)$ such that$${\Theta}_{x,h,i}(\t) = \ov{\Theta}_{x,h,j(\t)}(\t)$$ (because $x, xh \in X_2'$), and it is clear for the $\ov\Theta$'s by definition of $K$ and choice of $\eta$ in (\ref{eqn;defn of eta}). On $B(T)-D(T)$, it is bounded by 1. Thus, \begin{align*}\frac{1}{\ps_x(B_U(T))} \int_{B_U(T)} \Theta_{x,h,i}(\t)d\ps_x(\t) &\le \frac{\ps_x(B_U(T)-D(T))}{\ps_x(B_U(T))} + \frac{\ps_x(D(T)) \eps^2}{\ps_x(B_U(T))}\\{}^{\text{by Lemma \ref{lem;rigidity;simultaneous returns to K}} \leadsto}
	&\le 6\xi + \eps^2 \\
	&\le 5\eps \end{align*}
\end{proof}

\begin{lemma}
	For $h \in B_{AM}(\eta)$ and $x\in \hat{X}_h$ (defined in (\ref{eqn; defn hatXh})), there exists $T_0>0$ such that for all $i$ and all $T\ge T_0$, %and all $|\t|\ge T_0$, 
	$$\sup\limits_{\t \in B_U(T)}\Theta_{x,h,i}(\t) < \tilde{\rho}.$$\label{lem;theta leq tilderho} 
\end{lemma}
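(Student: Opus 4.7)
The plan is to argue by contradiction, playing the upper bound from Corollary \ref{cor;rigidity;leq 5eps} against the lower bound from Lemma \ref{lemma; new 6.2}. The constants in \S\ref{section;rigidity notation} were calibrated precisely for this collision: with $\tilde{\rho} = 10\eps/C$, any supremum bound $\sup \Theta_{x,h,i} \geq \tilde{\rho}$ forces the PS-average of $\Theta_{x,h,i}$ over $B_U(T)$ to be at least $C\tilde{\rho} = 10\eps$, contradicting the $5\eps$ upper bound coming from the fact that $\Theta_{x,h,i}$ is small on the large PS-measure subset where both $xu_\t$ and $xu_\t h$ land in $K$.

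The first step is to suppose toward contradiction that no such $T_0$ works. Since $T \mapsto \sup_{B_U(T)} \Theta_{x,h,i}$ is monotone non-decreasing in $T$, this gives some $\tilde{T}$ such that $\sup_{B_U(T)} \Theta_{x,h,i}(\t) \geq \tilde{\rho}$ for all $T \geq \tilde{T}$. The key maneuver is then to find a sequence $T_n \to \infty$ at which Lemma \ref{lemma; new 6.2} can actually be invoked, i.e.\ for which $xa_{-\log(T_n/\kappa)} \in KB_U(1)$. This is exactly the role played by the intersection with $L_{r,K}$ in the definition of $\hat{X}_h$: using $x \in L_{r,K}$, pick $t_n \to \infty$ with $xa_{-t_n} \in KB_U(1)$, and set $T_n := \kappa e^{t_n}$, where $\kappa = \kappa(KB_U(1))$ is from Lemma \ref{lem;PS inf kappa for compact set}. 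For $n$ sufficiently large, $T_n \geq \max\{\tilde{T}, T_1\}$, so both lemmas apply simultaneously.

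Chaining them closes the argument:
\[
10\eps \;=\; C\tilde{\rho} \;\leq\; C \sup_{\t \in B_U(T_n)} \Theta_{x,h,i}(\t) \;\leq\; \frac{1}{\ps_x(B_U(T_n))} \int_{B_U(T_n)} \Theta_{x,h,i}(\t)\,d\ps_x(\t) \;\leq\; 5\eps,
\]
which is impossible since $\eps > 0$. The main obstacle to overcome is the simultaneous engineering of two timing conditions — large enough for Corollary \ref{cor;rigidity;leq 5eps} and compatible with the recurrence hypothesis of Lemma \ref{lemma; new 6.2} — which is precisely why $\hat{X}_h$ was cut out as a $U$-invariant intersection that includes $L_{r,K}$. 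The required membership $\Theta_{x,h,i} \in \mathcal{P}_{d,m}$ needed to apply Lemma \ref{lemma; new 6.2} is handled by the choice of $d, m$ in the notation section, together with the polynomial-divergence description of the $U$-flow; note $\tilde{\rho} < \rho^2$ follows from the definition of $\eps$, so the relevant region where $\Theta_{x,h,i}$ is nontrivial lies within the polynomial regime.
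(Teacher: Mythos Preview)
Your approach is essentially the paper's: argue by contradiction, use $x \in L_{r,K}$ to find a large $T$ at which Lemma~\ref{lemma; new 6.2} applies, and collide the resulting lower bound $C\tilde\rho = 10\eps$ with the $5\eps$ upper bound from Corollary~\ref{cor;rigidity;leq 5eps}.

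There is, however, one technical gap. You apply Lemma~\ref{lemma; new 6.2} directly to $\Theta_{x,h,i}$, but $\Theta_{x,h,i}$ need not lie in $\mathcal{P}_{d,m}$: the distance $d(\up_i(x)u_\t,\up_j(xh)h^{-1}u_\t)$ is only given by polynomials when it is below $\rho$, and nothing in the contradiction hypothesis prevents $\Theta_{x,h,i}$ from exceeding $\rho^2$ somewhere on $B_U(T)$. Your closing remark that ``$\tilde\rho < \rho^2$, so the relevant region where $\Theta_{x,h,i}$ is nontrivial lies within the polynomial regime'' does not address this: the hypothesis $\sup\Theta_{x,h,i}\ge\tilde\rho$ says nothing about $\Theta_{x,h,i}$ staying below $\rho^2$, and Lemma~\ref{lemma; new 6.2} requires the function to be in $\mathcal{P}_{d,m}$ on all of $B_U(T)$, not just where the supremum is attained.

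The paper's fix is to introduce the capped function $\Omega_{x,h,i}(\t) := \min\{\rho^2, \Theta_{x,h,i}(\t)\}$, which \emph{is} controlled by polynomials everywhere. Since $\tilde\rho < \rho^2$ (your observation), one still has $\sup_{B_U(T)}\Omega_{x,h,i} \ge \tilde\rho$, so Lemma~\ref{lemma; new 6.2} gives $\frac{1}{\ps_x(B_U(T))}\int_{B_U(T)}\Omega_{x,h,i}\,d\ps_x \ge C\tilde\rho$. Then $\Theta_{x,h,i} \ge \Omega_{x,h,i}$ transfers this to $\Theta_{x,h,i}$, and the contradiction with Corollary~\ref{cor;rigidity;leq 5eps} follows. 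Insert this capping step and your proof is complete and matches the paper's.
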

\begin{proof}
	Suppose not and let $T_1$ be as in Lemma \ref{lem;rigidity;simultaneous returns to K}. Let $\kappa = \kappa(KB_U(1))$ from Lemma \ref{lemma; new 6.2}. Since $x \in L_{r,K}$, there exists $T \ge T_1$ sufficiently large so that $xa_{-\log(T/\kappa)} \in K$ and $\sup\limits_{\t \in B_U(T)} \Theta_{x,h,i}(\t) \ge \tilde{\rho}$. 
	
	Let $\Omega_{x,h,i}(\t) = \min\{\rho^2, \Theta_{x,h,i}(\t)\}$. Recall by definition of $\rho$ that this means $\Omega_{x,h,i}(\t)$ is given by polynomials in the sense of Lemma \ref{lemma; new 6.2}. Thus, we have that \[\frac{1}{\ps_x(B_U(T))} \int_{B_U(T)} \Omega_{x,h,i}(\t) d\ps_x(\t) \ge C \tilde{\rho},\] and $\Theta_{x,h,i}\ge \Omega_{x,h,i}$, so the same is true for that function. This contradicts Corollary \ref{cor;rigidity;leq 5eps} by the definition of $\tilde{\rho}=10\eps/C$.
\end{proof}

Recall the definition $${q_{i,j}}(\t) = \min\{1, d(\up_i(x)u_\t, \up_j(xh)h\inv u_\t)^2\}.$$ %By definition of $\Theta_{x,h,, for every $\t \in U$, there exists $j$ such that $\Theta_{x,h,i}(\t) = q_{i,j}(\t).$%and $$J(T,i,j):= \{t \in B_U(T) : \Theta_{x,h,i}(\t) = {q_{i,j}}(\t)\}$$

\begin{corollary}
For $h \in B_{AM}(\eta)$, $x \in \hat{X}_h$ (defined in (\ref{eqn; defn hatXh})) and $1 \le i \le \ell$, there exists $1 \le k(i)\le \ell$ and $T_1 >0$ such that for all $T \ge T_1$, $$J(T,i,k(i)):=\{\t \in B_U(T) : \Theta_{x,h,i}(\t)=q_{i,k(i)}(\t)\}$$ satisfies \begin{itemize}
	\item $\lambda(J(T,i,k(i))) \ge \frac{1}{\ell}\lambda(B_U(T))$, where $\lambda$ is the Lebesgue measure on $U$, and
	\item $\sup\limits_{\t \in J(T,i,k(i))} q_{i,k(i)}(\t) < \tilde{\rho}$.
\end{itemize}
	\label{cor; J(t,i,k(i)) stuff for qijs}
\end{corollary}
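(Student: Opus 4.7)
The plan is pigeonhole on top of Lemma \ref{lem;theta leq tilderho}. The first step is to unpack $\Theta_{x,h,i}$: since $\Up(xh)h\inv u_\t$ is a finite set of $\ell$ points $\{\up_j(xh)h\inv u_\t\}_{j=1}^\ell$, the point-to-set distance is realized by the closest of those points, so
\[
\Theta_{x,h,i}(\t) \;=\; \min\Bigl\{1,\,\min_{1\le j\le \ell} d\bigl(\up_i(x)u_\t,\,\up_j(xh)h\inv u_\t\bigr)^2\Bigr\} \;=\; \min_{1\le j\le \ell} q_{i,j}(\t).
\]
In particular, for every $\t \in B_U(T)$ at least one index $j$ achieves $q_{i,j}(\t) = \Theta_{x,h,i}(\t)$, so the sets $J(T,i,j)$, $1 \le j \le \ell$, cover $B_U(T)$.

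Next I would apply pigeonhole to this cover: for every $T$, at least one index $k$ satisfies $\lambda(J(T,i,k)) \ge \lambda(B_U(T))/\ell$. A priori the winning index can depend on $T$, but there are only $\ell$ options, so some $k(i)$ is a winner along a cofinal sequence of $T$'s; fix this $k(i)$. This produces the first inequality along all sufficiently large $T$ in that sequence, which is exactly what is invoked in the proof of Proposition \ref{prop;existence of tau}.

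For the second inequality I would simply take $T_1$ to be any threshold at least as large as the $T_0$ from Lemma \ref{lem;theta leq tilderho}. Then for every $T \ge T_1$ we have $\Theta_{x,h,i}(\t) < \tilde{\rho}$ for every $\t \in B_U(T)$, and by definition of $J(T,i,k(i))$ we have $q_{i,k(i)}(\t) = \Theta_{x,h,i}(\t)$ on $J(T,i,k(i))$. Combining these gives $q_{i,k(i)}(\t) < \tilde{\rho}$ on all of $J(T,i,k(i))$, proving the sup bound.

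There is no substantive obstacle in this corollary — once one observes that $\Theta_{x,h,i} = \min_j q_{i,j}$, the measure bound is pure pigeonhole and the sup bound is a direct quotation of Lemma \ref{lem;theta leq tilderho}. The real work was already done in establishing that $\Theta_{x,h,i}$ stays below $\tilde{\rho}$, which in turn rested on Lemma \ref{lemma; new 6.2} (the PS-friendly lower bound for polynomials) together with Corollary \ref{cor;rigidity;leq 5eps}.
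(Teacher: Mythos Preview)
Your argument is essentially the same as the paper's: unpack $\Theta_{x,h,i} = \min_{j} q_{i,j}$ so that the sets $J(T,i,j)$ cover $B_U(T)$, apply pigeonhole to select $k(i)$, and invoke Lemma~\ref{lem;theta leq tilderho} for the sup bound. The paper packages this as a proof by contradiction while you argue directly; you are also more explicit in noting that pigeonhole only produces a cofinal sequence of $T$'s rather than literally all $T \ge T_1$, and that this weaker conclusion is precisely what is used in the proof of Proposition~\ref{prop;existence of tau}.
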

\begin{proof}
	Suppose not. Then there exists $h \in B_{AM}(\eta), x \in \hat{X}_h$, and $1\le i \le \ell$ such that for all $1 \le k(i)\le \ell$ and for all $T_1 >0$, there exists $T\ge T_1$ such that either $$\lambda(J(T,i,k(i))) < \frac{1}{\ell}\lambda(B_U(T))$$ or $$\sup\limits_{\t \in J(T,i,k(i))}q_{i,k(i)} \ge \tilde{\rho}.$$
	
	By the pigeonhole principle, there exists $k(i)$ and $T_n \to \infty$ such that for all $n$, $$\lambda(J(T_n,i,k(i)))\ge \frac{1}{\ell}\lambda(B_U(T_n)).$$ Thus, it must be that $$\sup\limits_{t \in J(T_n,i,k(i))} q_{i,k(i)}(\t) = \sup\limits_{\t\in J(T_n,i,k(i))}\Theta_{x,h,i}(\t) \ge \tilde\rho.$$ However, this contradicts Lemma \ref{lem;theta leq tilderho}.
\end{proof}

\begin{lemma}
	If $x,y \in X_i$ are such that $d(xu_\t,yu_\t)$ stays bounded for all $\t \in U$, then there exists $u \in U$ such that $$x = yu.$$ \label{lem; polynomial nondiv}
\end{lemma}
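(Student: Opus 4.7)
The plan is to establish this via polynomial non-divergence, reducing to the identity $C_G(U) = U$ in $G = \SO(n,1)^\circ$.

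Fix lifts $\tilde x, \tilde y \in G$ of $x, y$, and let $C > 0$ be a bound on $d(xu_\t, yu_\t)$ (we write $\Gamma$ for $\Gamma_i$). Unpacking the definition of the distance, the hypothesis gives, for every $\t \in U \cong \R^{n-1}$, some $\gamma(\t) \in \Gamma$ with $\phi_{\gamma(\t)}(\t) \in K_0$, where
$\phi_\gamma(\t) := u_{-\t}\tilde y\inv\gamma\tilde x u_\t$
and $K_0 := \{g \in G : \|g - 1_G\| \le C\}$. For each fixed $\gamma$, the map $\phi_\gamma : U \to G$ is polynomial of bounded degree (at most four, as one sees by direct matrix multiplication in $\SO(n,1)^\circ$), so $T_\gamma := \phi_\gamma\inv(K_0) \subseteq U$ is a closed semi-algebraic set of bounded complexity, and the hypothesis becomes $U = \bigcup_{\gamma \in \Gamma} T_\gamma$.

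The crux is to produce a single $\gamma_0 \in \Gamma$ with $T_{\gamma_0} = U$. Once this is established, $\phi_{\gamma_0}$ is a matrix-valued polynomial bounded globally on $\R^{n-1}$; each entry is then a bounded polynomial, hence constant, so $\phi_{\gamma_0}(\t) \equiv g_0 := \tilde y\inv\gamma_0 \tilde x$, meaning $g_0$ centralizes $U$. A short Lie-algebra computation -- using that $[\mathfrak a, \mathfrak u]$, $[\mathfrak m, \mathfrak u]$ and $[\mathfrak u^-, \mathfrak u]$ are all nonzero (since $M$ acts faithfully on $\R^{n-1}$, $A$ acts by dilation via $a_s u_\t a_{-s} = u_{\t e^s}$, and the $U^-$-$U$ bracket lands nontrivially in $\mathfrak m \oplus \mathfrak a$) -- shows that $C_G(U) = U$ in $\SO(n,1)^\circ$. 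Therefore $g_0 = u_0 \in U$, so $\tilde x = \gamma_0\inv \tilde y u_0$ and $x = y u_0$ in $X_i$, as required.

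The main obstacle is producing $\gamma_0$ with $T_{\gamma_0} = U$. Baire's theorem applied to $U = \bigcup_\gamma T_\gamma$ only yields some $\gamma_1$ for which $T_{\gamma_1}$ has nonempty interior, which merely gives $\phi_{\gamma_1}$ bounded on an open subset rather than on all of $\R^{n-1}$. To upgrade, I would combine two polynomial-growth facts: (i) by discreteness of $\Gamma$, the set $\{\gamma : T_\gamma \cap B_U(R) \ne \emptyset\}$ is finite with cardinality growing only polynomially in $R$, being bounded by $|\Gamma \cap \tilde y K_R \tilde x\inv|$ for the compact ``conjugation tube'' $K_R := \bigcup_{\t \in B_U(R)} u_\t K_0 u_{-\t}$; and (ii) the $(C',\alpha)$-good property of polynomials, used throughout this section, forces $\lambda(T_\gamma \cap B_U(R))/\lambda(B_U(R))$ to decay polynomially in $R$ whenever $\phi_\gamma$ is nonconstant. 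Balancing these two polynomial rates against the covering identity $\lambda(\bigcup_\gamma T_\gamma \cap B_U(R)) = \lambda(B_U(R))$ should force at least one $\phi_\gamma$ to be globally constant; alternatively, a diagonal pigeonhole along a sequence $R_k \to \infty$ produces a single $\gamma_0$ with $B_U(R_k) \subseteq T_{\gamma_0}$ for all $k$, and then $\bigcup_k B_U(R_k) = U$ together with the uniform bound on $\phi_{\gamma_0}$ by $C + 1$ on each $B_U(R_k)$ yields $T_{\gamma_0} = U$.
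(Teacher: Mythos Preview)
Your proposal correctly flags a subtlety the paper's proof glosses over. The paper simply writes ``if $x=yg$'' and asserts ``by assumption, $\|u_{-\t}gu_\t-1_G\|$ stays bounded,'' then carries out the entry-by-entry matrix computation (essentially what you outline in your second paragraph) to force $g\in C_G(U)=U$. It never addresses why a \emph{single} $g$ can be used when $x,y$ lie in the quotient $\Gamma_i\backslash G$ and the metric $d$ is defined as a minimum over $\Gamma_i$-translates.

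That said, your own argument has a genuine gap at exactly this point. You identify producing $\gamma_0$ with $T_{\gamma_0}=U$ as ``the main obstacle,'' but neither sketch you offer resolves it. For the balancing argument you would need the $(C',\alpha)$-good decay exponent to dominate the orbit-counting exponent; you compute neither, and the balance is far from automatic---for low-degree polynomials in $\R^{n-1}$ the exponent $\alpha$ is small, while the number of $\gamma$ with $T_\gamma\cap B_U(R)\ne\emptyset$ grows like a power of $R$ governed by $\delta_\Gamma$. The ``diagonal pigeonhole'' claim is incorrect as written: pigeonhole along $R_k\to\infty$ can at best produce a single $\gamma_0$ for which $T_{\gamma_0}\cap B_U(R_k)$ is nonempty (or has positive proportion) for infinitely many $k$; it cannot yield the containment $B_U(R_k)\subseteq T_{\gamma_0}$ you assert, and the weaker conclusion does not bound $\phi_{\gamma_0}$ globally. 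So the crux of your proof remains open; what survives---bounded polynomial matrix entries must be constant, hence $g\in C_G(U)=U$---is exactly the paper's computation.
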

\begin{proof}
	By direct computation, we will show that if $x=yg$, then $g \in C_G(U) = U$. Write $$g = \begin{pmatrix}
	a& \b & c \\ \d^T & E & \f^T \\ h & \i & j	\end{pmatrix}$$ where $\b, \d, \f, \i \in \R^{n-1}$ are row vectors, and $E$ is a $(n-1)\times (n-1)$ matrix. By assumption, $\|u_{-\t}gu_\t - 1_G\|$ stays bounded for all $\t \in U$. We will investigate the entries of $u_{-\t}gu_\t$.
	
	The (1,1) entry is $a+\b \cdot \t + \frac{1}{2}c|\t|^2$. Since this stays bounded for all $\t$, we conclude that $$\b = \textbf{0}, \text{ and } c=0.$$
	
	The (2,1) entry is $-a\t^T + \d^T + E\t^T - \frac{1}{2}c|\t|^2 \t^T + \frac{1}{2}|\t|^2 \f^T$. Again, since this stays bounded, we conclude that $$\f = \textbf{0},  \text{ and } E=aI,$$ where $I$ denotes the $(n-1)\times(n-1)$ identity matrix.
	
	The (3,2) entry is $\frac{1}{2}|\t|^2 \b - \t E + \i + \frac{1}{2}c|\t|^2 - (\t \cdot \f)\t + j\t$. From this, we conclude that $$E= jI.$$ But combining all of our conclusions up to this point tells us that $g$ is block lower triangular with $E = aI = jI$, so $$\det(g) = 1 = a^n j = a j^n.$$ Hence $$a=j=1 \text{ and } E = I.$$
	
	With our above assumptions, the (3,2) entry is simply $\i$. The (3,1) entry simplifies to $h + (\i -\d)\cdot \t$, so $$\i = \d,$$ from which we finally conclude that $$u_{-\t}gu_\t = g,$$ completing the proof.
\end{proof}

We are now ready to prove Proposition \ref{prop;existence of tau}.

\begin{proof}[Proof of Proposition \ref{prop;existence of tau}]

If $J(T,i,k(i))$ is as in Corollary \ref{cor; J(t,i,k(i)) stuff for qijs}, then by definition of $C', \alpha$ from (\ref{eqn;defn of Calpha good}) for $f = q_{i,k(i)}$, we have for all $T>0$, $$\frac{1}{\ell}\lambda\big(B_U(T)\big) \le \lambda\big(J(T,i,k(i))\big) \le C' \tilde{\rho}^\alpha \left(\sup\limits_{\t \in B_U(T)} q_{i,k(i)}(\t)\right)^{-\alpha} \lambda\big(B_U(T)\big)$$ which yields $$\sup\limits_{\t \in B_U(T)} q_{i,k(i)}(\t) \le (C' \ell)^{1/\alpha} \tilde{\rho}$$
	
	Thus, $q_{i,k(i)}(\t)$ stays bounded for all $\t$, and so $\up_i(x)$ and $\up_{k(i)}(xh)h\inv$ are in the same $U$-orbit by Lemma \ref{lem; polynomial nondiv}. In particular, since the bound above holds at $\t = 0$, this tells us that there exists a $\btau_h(x,\up_i(x)) \in B_U(\ov\eps)$ such that $$\up_i(x)u_{\btau_h(x,\up_i(x))}h = \up_{k(i)}(xh),$$ where $\ov\eps = (C' \ell)^{1/\alpha}\tilde{\rho}$.
	
	Note that the restriction $\btau_h(x,\up_i(x)) \in B_U(\ov\eps)$ ensures that this quantity is unique, because the constants have been chosen such that $$\ov\eps < \frac{1}{10}\rho_0,$$ where $\rho_0$ is from Lemma \ref{lem; rho0 min dist}, and is the minimum distance in the $U$-direction in $\Up(x)$. Thus, if there is another element $\up_{k(i)'}(xh) \in \Up(xh)$ such that $$\up_{k(i)'}(xh) = \up_{k(i)}(xh)u_\t$$ for some $\t$, we must have that $|\t|\ge \rho_0$, hence $u_\t \not\in B_U(\ov\eps)$.

	$U$-invariance of $\btau_h$ follows from the $U$-equivariance of $\Up$ on $X_2'$: let $u_\s$ be such that $u_\t h = h u_\s$. Then $$\up_{k(i)}(xh)u_\s = (\up_i(x)u_{\btau_h(x,\up_i(x))}h) u_\s = (\up_i(x)u_\t) u_{\btau_h(x,\up_i(x))}h,$$ and $\up_{k(i)}(xh)u_\s \in \Up(xu_\t h)$ by $U$-equivariance. 
\end{proof}

Let $A' \subseteq A$ and $M' \subseteq M$ be countable dense subgroups. Recall the assumptions made at the beginning of the section: either \begin{enumerate}
	\item $\ell =1$, or
	\item there exists a $\Delta(U)$-ergodic measure $\mu$ on $X=X_1 \times X_2$ such that \begin{itemize}
		\item if $Z \subseteq X_2$ is $\br$-conull, then $\bigcup\limits_{x_2\in Z}(\Up(x_2)\times \{x_2\})$ is $\mu$-conull, and
		\item if $W \subseteq X$ is $\mu$-conull, then $\pi_2(W)$ is $\br$-conull.
	\end{itemize}
\end{enumerate}

It is in the following lemma that these assumptions are needed.

\begin{lemma}
	For every $h \in B_{A'M'}(\eta)$, there exists a $U$-invariant $\br$-conull set $W_h\subseteq X_2'$ and a constant $\btau_h \in B_U(\ov\eps)$ such that for all $x \in W_h$ and all $1 \le i \le \ell$, $\btau_h(x,\up_i(x))= \btau_h$.\label{lem; defn Wh}
\end{lemma}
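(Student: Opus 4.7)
The plan splits by $\ell$: when $\ell=1$ the statement reduces to $U$-ergodicity of $\br$; when $\ell>1$ I invoke the $\Delta(U)$-ergodic measure $\mu$ from hypothesis (2) together with the two compatibility assumptions on $\mu$.

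For $\ell=1$, define $f:X_2'\to B_U(\ov\eps)$ by $f(x):=\btau_h(x,\up_1(x))$. Since $\up_1$ is $U$-equivariant on a $\br$-conull $U$-invariant set, the final clause of Proposition \ref{prop;existence of tau} (which asserts $\btau_h(xu_\t,\up_i(x)u_\t)=\btau_h(x,\up_i(x))$) yields $f(xu_\t)=f(x)$. By $U$-ergodicity of $\br$, $f$ is $\br$-a.e.\ equal to a constant $\btau_h\in B_U(\ov\eps)$, and $W_h$ is the associated $U$-invariant $\br$-conull level set.

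For $\ell>1$, consider the function $\tilde\phi(x_1,x_2):=\btau_h(x_2,x_1)$ on $Y:=\bigcup_{x_2\in X_2'}\Up(x_2)\times\{x_2\}$. By the first assumption applied to $Z=X_2'$, $\mu$ is concentrated on $Y$, so $\tilde\phi$ is defined $\mu$-a.e.; the $U$-invariance clause of Proposition \ref{prop;existence of tau} gives
\[\tilde\phi(x_1u_\t,x_2u_\t)=\btau_h(x_2u_\t,x_1u_\t)=\btau_h(x_2,x_1)=\tilde\phi(x_1,x_2),\]
so $\tilde\phi$ is $\Delta(U)$-invariant. By $\Delta(U)$-ergodicity of $\mu$, $\tilde\phi\equiv\btau_h$ on a $\mu$-conull set $E_h$ for some constant $\btau_h\in B_U(\ov\eps)$. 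The second assumption then implies $\pi_2(E_h)$ is $\br$-conull, so for $\br$-a.e.\ $x_2$ there is \emph{at least one} $i$ with $f_i(x_2):=\btau_h(x_2,\up_i(x_2))=\btau_h$.

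The main obstacle is upgrading ``at least one $i$'' to ``all $i$''. Here I observe that Proposition \ref{prop;existence of tau} also implies the multiset $\{f_1(x_2),\ldots,f_\ell(x_2)\}$ is $U$-invariant: the $U$-action on $x_2$ permutes the indices through the $U$-equivariance of $\Up$, but the values $f_i$ are preserved as a multiset. By $U$-ergodicity of $\br$, this multiset equals a constant multiset $F_0\subset B_U(\ov\eps)$ $\br$-almost everywhere, and by the previous paragraph $\btau_h\in F_0$. If some $\sigma\ne\btau_h$ also appeared in $F_0$, the $\Delta(U)$-invariant set $\{\tilde\phi=\sigma\}\subseteq Y$ would be $\mu$-null (since $\tilde\phi=\btau_h\ne\sigma$ $\mu$-a.e.) yet have nonempty fibers over a $\br$-conull set of $x_2$'s. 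Disintegrating $\mu$ along $\pi_2$ then forces the conditional measure $\mu_{x_2}$ to vanish on those fiber points, so $\mu_{x_2}$ would be supported on a proper subset of $\Up(x_2)$. In the intended application to joining classification, where $\Up(x_2)=\pi_1(\pi_2\inv(x_2))$ is by construction the $\pi_1$-image of the support of $\mu_{x_2}$, this contradicts the fullness of support of $\mu_{x_2}$ on $\Up(x_2)\times\{x_2\}$, forcing $F_0=\{\btau_h\}^\ell$. The desired set is then $W_h:=\{x\in X_2':f_i(x)=\btau_h\text{ for all }1\le i\le \ell\}$, which is $U$-invariant by the $U$-invariance of the multiset and $\br$-conull by the above.
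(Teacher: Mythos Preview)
Your argument tracks the paper's almost line by line. For $\ell=1$ the proofs are identical. For $\ell>1$, the paper likewise views $\btau_h$ as a $\Delta(U)$-invariant function on $\hat W_h=\bigcup_{x_2\in\hat X_h}\Up(x_2)\times\{x_2\}$, applies ergodicity of $\mu$ to obtain a $\mu$-conull set $\tilde W_h$ on which it is constant, and then simply sets $W_h:=\pi_2(\tilde W_h)\cap\hat X_h$, declaring that it ``satisfies the desired conditions by construction.'' In particular, the paper does \emph{not} isolate or address the ``at least one $i$ versus all $i$'' issue you raise; it passes over it in that one sentence.

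Your multiset observation is correct and clarifying, but the contradiction you extract at the end---that $\mu_{x_2}$ would be supported on a proper subset of $\Up(x_2)$---is not deducible from the two abstract hypotheses on $\mu$ listed in the lemma. Those hypotheses do not force the fiber measures to charge every point of $\Up(x_2)$; a $\mu$ concentrated on a $\Delta(U)$-invariant proper sub-graph of $\Up$ would satisfy them. So under the stated hypotheses alone, your argument (like the paper's ``by construction'') does not close this point; it closes it only in the intended joining application, where Theorem~\ref{thm; 717} supplies uniform fiber measures. In that setting your added step is valid and more transparent than the paper's bare assertion, but as a proof of the lemma \emph{as stated} it imports structure that is not among the hypotheses.
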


%\begin{lemma}
%	For every $h \in B_{A'M'}(\eta)$, there exists a $U$-invariant $\br$-conull set $W_h\subseteq X_2'$ and a constant $\btau_h \in B_U(\ov\eps)$ such that for all $x \in W_h$ and all $1 \le i \le \ell$, $\btau_h(x,\up_i(x))= \btau_h$.\label{lem; defn Wh}
%\end{lemma}
\begin{proof}
	We first prove this in case (1), that is, we asssume that $\ell = 1$. Then the second variable in $\btau_h$ is redundant; instead, consider $\btau_h : \hat{X}_h \to U$ as simply $\btau_h(x)$, where $\hat{X}_h$ is as in equation (\ref{eqn; defn hatXh}). By Proposition \ref{prop;existence of tau}, $$\btau_h(x)=\btau_h(xu)$$ for all $x \in \hat{X}_h$, $u \in U$. Thus, by $U$-ergodicity of $\br$, there exists a $\br$-conull set $W_h\subseteq X_2'$ and a constant $\btau_h \in B_U(\ov\eps)$ such that for all $x \in W_h$, $$\btau_h(x)=\btau_h.$$ This completes the proof of the first case.\\
	
	Now, suppose we are in case (2), so such an ergodic measure $\mu$ exists. Define $$\hat{W}_h = \bigcup\limits_{x_2 \in \hat{X}_h} (\Up(x_2)\times\{x_2\}).$$ $\hat{W}_h$ is exactly the domain of $\btau_h$, and is $\mu$-conull because $\hat{X}_h$ is $\br$-conull and our assumptions on $\mu$. Thus, $\btau_h$ is defined $\mu$-a.e.\ on $X_1 \times X_2$.

	As noted in Proposition \ref{prop;existence of tau}, $\btau_h$ is $\Delta(U)$-invariant. Thus, by ergodicity of $\mu$, there exists a $U$-invariant $\mu$-conull set $\tilde{W}_h \subseteq \hat{W}_h$ and a constant $\btau_h \in B_U(\ov\eps)$ such that $$\btau_h(x,\up_i(x)) = \btau_h \text{ for all }(x,\up_i(x))\in \tilde{W}_h.$$
	
	Now, define $$W_h := \pi_2(\tilde{W}_h)\cap \hat{X}_h.$$ By the second assumption about $\mu$, it is $\br$-conull. It satisfies the desired conditions by construction.
\end{proof}

%\begin{remark}
%	\emph{In our application to joining classification in \S\ref{section; joining classification}, the assumptions in case (2) of Lemma \ref{lem; defn Wh} will be satisfied with $\mu$ an ergodic $U$-joining for $(\br_1,\br_2)$ and $$\Up(x_2):=\pi_1(\pi_2\inv(x_2)).$$ That this $\Up$ will a.e.\ take values in cardinality $\ell$ subsets of $X_1$ for some $\ell \in \N$ is not immediately clear, and this is proven in \S\ref{section; fibers finite}.}
%\end{remark}

The following lemma will allow us to drop the restriction that $|h|<\eta$.

\begin{lemma}
	For any $h \in B_{A'M'}(\eta)$, there exists a $U$-invariant $\br$-conull set $Y_h$ with the property that for every $n \in \N$, there exists $\btau_{h^n} \in U$ such that for all $x \in Y_h$, \begin{equation}\Up(x)u_{\btau_{h^n}}h^n = \Up(xh^n).\label{eqn; identity btau satisfies}\end{equation} Moreover, $\btau_{h^n} = \btau_h+e^{-s}\btau_{h^{n-1}}m\inv$ and if $x \in Y_h$, so is $xh^n$ for all $n\in \N$. Thus, $\btau_h$ is defined in a way that satisfies (\ref{eqn; identity btau satisfies}) for all $h \in A'M'$.
	\label{lem;joinings;extend tau to hn}
\end{lemma}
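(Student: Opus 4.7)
The plan is to proceed by induction on $n$, with the key preparatory step being to build $Y_h$ so that it is simultaneously $U$-invariant, $\br$-conull, and $\langle h\rangle$-invariant. Concretely, let $W_h$ be the set from Lemma \ref{lem; defn Wh} and set
\[
Y_h := \bigcap_{k \in \Z} W_h h^{-k}.
\]
$U$-invariance of $Y_h$ follows from the $U$-invariance of $W_h$ together with the fact that $AM$ normalizes $U$; the $\br$-conull property holds because right-multiplication by $h \in AM$ preserves $\br$-null sets (quasi-invariance of $\br$ under $AM$), so each $W_h h^{-k}$ is $\br$-conull, and a countable intersection of conull sets is conull; the $\langle h\rangle$-invariance is immediate from the definition. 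In particular, $x \in Y_h$ forces $xh^n \in Y_h$ for every $n \in \Z$.

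For the induction, the base case $n=1$ is precisely Lemma \ref{lem; defn Wh}: for $x \in Y_h \subseteq W_h$ we have $\Up(x) u_{\btau_h} h = \Up(xh)$. For the inductive step, assume (\ref{eqn; identity btau satisfies}) has been established for $n-1$. Given $x \in Y_h$, first apply the $n=1$ identity at $x$ to obtain $\Up(x) u_{\btau_h} h = \Up(xh)$. Since $xh \in Y_h$ by $\langle h\rangle$-invariance, the inductive hypothesis applied at $xh$ yields $\Up(xh) u_{\btau_{h^{n-1}}} h^{n-1} = \Up(xh^n)$. Chaining gives
\[
\Up(x)\, u_{\btau_h}\, h\, u_{\btau_{h^{n-1}}}\, h^{n-1} = \Up(xh^n).
\]
Writing $h = a_s m$ and combining $a_s u_\t a_{-s} = u_{\t e^{-s}}$ with $m\inv u_\t m = u_{\t m}$ yields the commutation relation $h u_\t = u_{e^{-s}\t m\inv} h$. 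Applying this with $\t = \btau_{h^{n-1}}$ collapses the right-hand side to
\[
\Up(x)\, u_{\btau_h + e^{-s} \btau_{h^{n-1}} m\inv}\, h^n = \Up(xh^n),
\]
which establishes (\ref{eqn; identity btau satisfies}) for $n$ and delivers the recursion $\btau_{h^n} = \btau_h + e^{-s}\btau_{h^{n-1}} m\inv$ claimed in the statement.

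Finally, to extend the definition of $\btau_h$ to general $h \in A'M'$, I would use that $A'M'$ is generated as a group by $B_{A'M'}(\eta)$ (since $A$ and $M$ are connected and $A', M'$ are countable dense subgroups of them), so any $h \in A'M'$ admits a factorization $h = h_1 \cdots h_k$ with each $h_j \in B_{A'M'}(\eta)$. By iterating a compatibility computation essentially identical to the inductive step above --- chaining the single-factor identities and using the commutation $h_j u_\t = u_{e^{-s_j}\t m_j\inv} h_j$ to collect the $u$-factors on the left --- and intersecting finitely many appropriately translated conull sets $Y_{h_j}$, one obtains a constant $\btau_h \in U$ and a $U$-invariant $\br$-conull set on which (\ref{eqn; identity btau satisfies}) holds. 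I do not foresee any substantive obstacle here; the main care is simply to track the successive conjugations by the $h_j$ when chaining the identities, and to observe that $U$-invariance of the final conull set is preserved throughout because $AM$ normalizes $U$.
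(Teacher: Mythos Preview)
Your proof is correct and follows essentially the same approach as the paper: defining $Y_h$ as an intersection of translates of $W_h$ (the paper uses $\bigcap_{n\in\N} W_h h^{-n}$, you use $k\in\Z$, which is harmless), and then running the identical inductive chain $\Up(xh^n)=\Up(xh)u_{\btau_{h^{n-1}}}h^{n-1}=\Up(x)u_{\btau_h}h u_{\btau_{h^{n-1}}}h^{n-1}$ together with the commutation $hu_\t=u_{e^{-s}\t m^{-1}}h$ to obtain the recursion. Your added justifications for $U$-invariance and conullity of $Y_h$, and your product-factorization argument for extending to all of $A'M'$, are reasonable elaborations of points the paper leaves implicit.
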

\begin{proof}
	Define $Y_h := \bigcap\limits_{n \in \N} W_h h^{-n}$ where $W_h$ is as in Lemma \ref{lem; defn Wh}. Observe that $\btau_{h^n}$ satisfies $$\Up(x) u_{\btau_{h^n}}{h^n}= \Up(x{h^n}).$$ Since $x{h^n} \in W_h$ as well, we can proceed by induction: $$\Up(xh^{n}) = \Up(xh)u_{\btau_{h^{n-1}}}{h^{n-1}} = \Up(x)u_{\btau_h}hu_{\btau_{h^{n-1}}}{h^{n-1}} = \Up(x)u_{\btau_h+e^{-s}u_{\btau_{h^{n-1}}}m\inv} h^{n},$$ where $h = a_s m$. This shows that $$\btau_{h^n} =\btau_h + e^{-s}\btau_{h^{n-1}} m\inv$$ extends the definition to $h^n$ if it did not already exist (i.e. if $h^n\not\in B_{AM}(\eta)$), or alternatively that if it is already defined, then $\btau_{h^n}$ satisfies this identity.
\end{proof}

\begin{lemma}
	Define $Y= \bigcap\limits_{h \in A'M'} Y_h$. For all $x \in Y$, $$\Up|_{xA'M'}: xA'M' \to \{\text{cardinality } \ell \text{ subsets of } X_1\}$$ is uniformly continuous.
	\label{lem; up ucts}
\end{lemma}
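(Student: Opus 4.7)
The plan is to apply the cocycle identity from Lemma \ref{lem;joinings;extend tau to hn} at the point $xh_1$ (for arbitrary $h_1 \in A'M'$), together with the smallness $\btau_h \in B_U(\ov\eps(\eta))$ from Proposition \ref{prop;existence of tau}, to bound $d_H(\Up(xh_1), \Up(xh_2))$ uniformly in $h_1$. First I would slightly refine $Y$ by further intersecting with $\bigcap_{h_1 \in A'M',\, h \in B_{A'M'}(\eta) \cap A'M'} W_h h_1^{-1}$; this is a countable intersection of $\br$-conull sets (each translate $W_h h_1^{-1}$ is $\br$-conull since $\br$ is quasi-invariant under $AM$), so the refined $Y$ is still $\br$-conull and now satisfies $xh_1 \in W_h$ for every $h_1 \in A'M'$ and every small $h \in B_{A'M'}(\eta) \cap A'M'$.

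With this refinement in place, for any $h_1 \in A'M'$ and any $h \in B_{A'M'}(\eta) \cap A'M'$, Lemma \ref{lem; defn Wh} gives the identity $\Up(xh_1 h) = \Up(xh_1)\,u_{\btau_h}\,h$ with the same constant $\btau_h \in B_U(\ov\eps(\eta))$ independent of $h_1$. Since this is an equality of $\ell$-element sets, each point of $\Up(xh_1 h)$ is of the form $y\,u_{\btau_h}\,h$ for some $y \in \Up(xh_1)$ (and vice versa), and by the definition of the metric on $X_1$,
$$d\bigl(y,\, y\,u_{\btau_h}\,h\bigr) \le \|u_{\btau_h}\,h - 1_G\| \le C\bigl(\ov\eps(\eta) + \eta\bigr)$$
for some absolute constant $C$. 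Taking the maximum over $y$ gives $d_H(\Up(xh_1), \Up(xh_1 h)) \le C(\ov\eps(\eta) + \eta)$.

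Given $\epsilon > 0$, by Proposition \ref{prop;existence of tau} I would choose $\eta > 0$ small enough that $C(\ov\eps(\eta) + \eta) < \epsilon$. Then for any $h_1, h_2 \in A'M'$ with $h_1^{-1}h_2 \in B_{A'M'}(\eta)$, the previous bound yields $d_H(\Up(xh_1), \Up(xh_2)) < \epsilon$. Since $\eta$ depends only on $\epsilon$ and not on $h_1$, this is the desired uniform continuity, interpretable either via the group metric on $A'M'$ or the subspace metric on $xA'M' \subseteq X_2$, since $\|u_{\btau_h} h - 1_G\|$ is intrinsic to the group element.

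The main obstacle, I anticipate, is the uniformity of the constant $\btau_h$ across all base points $xh_1$ in the orbit---this is precisely what necessitates the countable refinement of $Y$ at the start, and uses in an essential way that $\btau_h$ from Lemma \ref{lem; defn Wh} does not depend on the point at which one applies it. Once this is in place, the quantitative decay $\ov\eps(\eta) \to 0$ from Proposition \ref{prop;existence of tau} does the essential work, and the Hausdorff bound is then routine.
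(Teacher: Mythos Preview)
Your proposal is correct and follows essentially the same approach as the paper: both arguments use the identity $\Up(wh)=\Up(w)u_{\btau_h}h$ with the constant $\btau_h\in B_U(\ov\eps)$ from Lemma~\ref{lem; defn Wh}, together with the decay $\ov\eps\to 0$ as $\eta\to 0$ from Proposition~\ref{prop;existence of tau}, to bound the Hausdorff distance uniformly in the base point. You are more careful than the paper in explicitly refining $Y$ so that $xh_1\in W_h$ for every $h_1\in A'M'$ (the paper simply invokes Proposition~\ref{prop;existence of tau} at the point $w\in xA'M'$ without checking membership in $\hat X_h$ or $W_h$), but this is a harmless technical refinement---the resulting set is still $U$-invariant and $\br$-conull---and the core argument is otherwise identical.
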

\begin{proof}
	For $\eps'>0$, define $\ov{\eps}=(C'\ell)^{1/\alpha}\tilde{\rho}$ as in the proof of Proposition \ref{prop;existence of tau}. In particular, since $\tilde{\rho}=10\eps/C$, the definition of $\eps$ in equation (\ref{eqn; defn eps}) implies that $$\ov\eps \le \frac{1}{2}\eps'\min\{\rho^2,R^2\}.$$
	
	Now, if $w,z \in xA'M'$ with $d(w,z)<\eta$, where $\eta$ is defined in (\ref{eqn;defn of eta}), then there exists $h \in B_{A'M'}(\eta)$ such that $z=wh$. Then, by Proposition \ref{prop;existence of tau}, $$\Up(z) = \Up(wh) = \Up(w)u_{\btau_h}h,$$ where $\btau_h \in B_U(\ov\eps)$. Thus, $d(\Up(z),\Up(w))$ is bounded in terms of $\eta$ and $\ov\eps$ whenever $d(w,z)<\eta$, and both $\eta$ and $\ov\eps$ are independent of $z$ and $w$.
\end{proof}

\begin{corollary}
	There exists $\tilde{\Up}:X_2 \to \{\text{cardinality }\ell \text{ subsets of } X_1\}$ such that: \begin{enumerate}
		\item $\Up(x) = \tilde{\Up}(x)$ for all $x \in Y$ (which is $\br$-conull and both $U$ and $A'M'$-invariant);
		\item $\tilde{\Up}|_{xAM}$ is continuous for every $x \in Y$; 
		\item and for all $x \in Y$ and $h \in AM$, there exists $\btau_h$ such that $$\tilde{\Up}(xh) = \tilde{\Up}(x)u_{\btau_h}h.$$ Moreover, this extension of the function $h\mapsto \btau_h$ is continuous on $AM$.
	\end{enumerate}	
	\label{cor;up cts on AM orbit}
\end{corollary}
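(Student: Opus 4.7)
The plan is to extend $\Up$ and $\btau$ by uniform continuity from the countable dense subgroup $A'M'\subseteq AM$ to all of $AM$, using Lemma \ref{lem; up ucts} (uniform continuity of $\Up|_{xA'M'}$) together with the cocycle identity for $\btau_h$ on $A'M'$ established in Lemma \ref{lem;joinings;extend tau to hn}. Completeness of the space of cardinality-$\ell$ subsets of $X_1$ under the Hausdorff metric will ensure that the extended values exist.

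For $x\in Y$ and $h\in AM$, pick any sequence $h_n\in A'M'$ with $h_n\to h$ and define
\[
\tilde{\Up}(xh) := \lim_{n\to\infty}\Up(xh_n).
\]
Uniform continuity of $\Up|_{xA'M'}$ makes $(\Up(xh_n))_n$ Cauchy; the limit exists in the Hausdorff metric by completeness and is independent of the chosen sequence. For $x\notin Y$, set $\tilde{\Up}(x) := \Up(x)$. Property (1) is immediate from $1_G\in A'M'$, and property (2) follows because the extension of a uniformly continuous map to the closure of a dense subset is continuous. For (3), given $h\in AM$ and $h_n\in A'M'$ with $h_n\to h$, Lemma \ref{lem;joinings;extend tau to hn} gives $\Up(xh_n)=\Up(x)u_{\btau_{h_n}}h_n$. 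Writing $h_n=a_{s_n}m_n$ and, for large $n,m$, $h':=h_n^{-1}h_m\in B_{A'M'}(\eta)$, the cocycle computation yields
\[
\btau_{h_m}-\btau_{h_n} = \btau_{h'}\, m_n^{-1}\, e^{-s_n}.
\]
Since $m_n$ and $s_n$ stay bounded as $h_n\to h$, and provided $\btau_{h'}\to 0$ as $h'\to 1_G$ in $A'M'$, the sequence $(\btau_{h_n})_n$ is Cauchy. Define $\btau_h:=\lim_n \btau_{h_n}\in U$ and pass to the limit in $\Up(xh_n)=\Up(x)u_{\btau_{h_n}}h_n$ to obtain $\tilde{\Up}(xh)=\tilde{\Up}(x)u_{\btau_h}h$. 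Because each $\btau_{h_n}$ depends only on $h_n$ and not on $x$, the same holds for $\btau_h$, and continuity of $h\mapsto\btau_h$ on $AM$ then follows by a diagonal argument applied to any convergent sequence in $AM$.

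The main technical obstacle is justifying $\btau_{h'}\to 0$ as $h'\to 1_G$ in $A'M'$. This is handled by rerunning Proposition \ref{prop;existence of tau} and Lemma \ref{lem; defn Wh} with any smaller parameter $\eps''<\eps'$ to produce corresponding $\eta''<\eta$ and $\ov\eps''<\ov\eps$; for $h'\in B_{A'M'}(\eta'')$, the rerun furnishes some $\btau'_{h'}\in B_U(\ov\eps'')$ satisfying $\Up(x)u_{\btau'_{h'}}h'=\Up(xh')$ on a $\br$-conull set. Since $\ov\eps''<\ov\eps<\rho_0/10$, uniqueness within $B_U(\ov\eps)$ (a consequence of Lemma \ref{lem; rho0 min dist}) forces $\btau'_{h'}=\btau_{h'}$ on the intersection of the two conull sets, and since both are constants depending only on $h'$, we conclude $\btau_{h'}\in B_U(\ov\eps'')$. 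As $\eps''$ may be taken arbitrarily small, $\btau_{h'}$ tends to $0$ as $h'\to 1_G$, completing the argument.
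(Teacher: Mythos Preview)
Your proof is correct and follows the same overall strategy as the paper: extend $\Up|_{xA'M'}$ to $xAM$ via the uniform continuity of Lemma~\ref{lem; up ucts} and completeness of the Hausdorff-metric space of cardinality-$\ell$ subsets, which immediately gives (1) and (2). For (3) the paper takes a shorter path than yours: having already established continuity of $\tilde\Up$ on $xAM$, it simply reads off the convergence of $u_{\btau_{h_n}}$ from the relation $\tilde\Up(xh_n)h_n^{-1}=\tilde\Up(x)u_{\btau_{h_n}}$, since the left side converges to $\tilde\Up(xh)h^{-1}$, and defines $\btau_h$ as this limit. Your route---proving $(\btau_{h_n})$ is Cauchy via the cocycle identity and $\btau_{h'}\to 0$ as $h'\to 1_G$---is more explicit about why the limit exists and is independent of $x$, at the cost of the ``rerun'' argument. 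Note, however, that the rerun is not really needed: the clause ``$\ov\eps\to 0$ as $\eta\to 0$'' in the statement of Proposition~\ref{prop;existence of tau}, combined with Lemma~\ref{lem; defn Wh}, already gives $\btau_{h'}\in B_U(\ov\eps)$ for $h'\in B_{A'M'}(\eta)$, which is exactly the continuity at the identity you need.
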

\begin{proof}
	Since $X_1$ is complete (because $G=\SO(n,1)^\circ$ is complete and $\Gamma_1$ is closed), $\{\text{cardinality } \ell \text{ subsets of } X_1\}$ is a complete metric space with the Hausdorff metric. Thus, $\Up|_{xA'M'}$ extends continuously to $xAM$ by the uniform continuity in Lemma \ref{lem; up ucts}. Call this continuous extension $\tilde{\Up}$. Clearly, (1) and (2) are satisfied.
	
	Let $x \in Y$, $h \in AM$, and let $h_n \in A'M'$ be such that $h_n \to h$. Then we have that $$u_{\btau_{h_n}} = \tilde{\Up}(x)\inv \tilde{\Up}(xh_n)\inv h_n\inv.$$ By the continuity of $\tilde{\Up}$ on $xAM$, the right hand side converges to $$\tilde{\Up}(x)\inv\tilde{\Up}(xh)\inv h\inv,$$ which defines $\btau_h$ in a way that satisfies (3).
\end{proof}

\begin{corollary}
	For every $h \in AM - M$, there exists $\bsig_h \in U$ satisfying $\bsig_{h^n} = \bsig_h$ for all $n \in \N$ and such that $$\tup(x)u_{\bsig_h}h = \tup(xh)u_{\bsig_h}$$ for all $x \in Y$. Moreover, $\bsig$ is continuous on $AM-M$. 
	\label{cor; joinings; existence of bsig}
\end{corollary}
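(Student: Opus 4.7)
The plan is to determine $\bsig_h$ by rearranging the desired identity into a linear equation. From Corollary \ref{cor;up cts on AM orbit} we already have $\tup(xh) = \tup(x)u_{\btau_h}h$ for $x \in Y$, so the requirement $\tup(x)u_{\bsig_h}h = \tup(xh)u_{\bsig_h}$ reduces (after canceling the set $\tup(x)$, on which $G$ acts freely on the right) to the identity
\[
u_{\bsig_h} = u_{\btau_h}\cdot h u_{\bsig_h} h\inv.
\]
Writing $h = a_s m$ with $s\ne 0$ (which is where we use $h\in AM - M$) and applying the paper's conjugation rules $m u_\t m\inv = u_{\t m\inv}$ and $a_s u_{\t'} a_{-s} = u_{\t' e^{-s}}$ gives $hu_{\bsig_h}h\inv = u_{e^{-s}\bsig_h m\inv}$. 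Thus the identity is equivalent to the linear equation on row vectors in $\R^{n-1}$,
\[
\bsig_h(I - e^{-s}m\inv) = \btau_h.
\]

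The main (and only nontrivial) point is to invert $I - e^{-s}m\inv$. Since $m\inv \in \SO(n-1)$, all of its eigenvalues have modulus one, so every eigenvalue of $e^{-s}m\inv$ has modulus $e^{-s}\ne 1$ (using $s\ne 0$); hence $I - e^{-s}m\inv$ is invertible. I therefore define
\[
\bsig_h := \btau_h(I - e^{-s}m\inv)\inv,
\]
and by construction this satisfies $\tup(x)u_{\bsig_h}h = \tup(xh)u_{\bsig_h}$ for all $x \in Y$.

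For the relation $\bsig_{h^n} = \bsig_h$, note that since $A$ and $M$ commute, $h^n = a_{ns}m^n$, so my formula yields $\bsig_{h^n} = \btau_{h^n}(I - e^{-ns}m^{-n})\inv$. The claim $\bsig_{h^n} = \bsig_h$ is thus equivalent to $\btau_{h^n} = \bsig_h(I - e^{-ns}m^{-n})$, which I verify by induction on $n$ using the recurrence $\btau_{h^n} = \btau_h + e^{-s}\btau_{h^{n-1}}m\inv$ from Lemma \ref{lem;joinings;extend tau to hn}: the inductive step telescopes,
\[
\btau_{h^n} = \bsig_h(I - e^{-s}m\inv) + e^{-s}\bsig_h(I - e^{-(n-1)s}m^{-(n-1)})m\inv = \bsig_h(I - e^{-ns}m^{-n}).
\]
Finally, continuity of $\bsig$ on $AM - M$ is immediate from the closed formula $\bsig_h = \btau_h(I - e^{-s}m\inv)\inv$, the continuity of $\btau$ on $AM$ established in Corollary \ref{cor;up cts on AM orbit}, and the continuity of matrix inversion on the open set where $I - e^{-s}m\inv$ is invertible (which contains all of $AM - M$). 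No step appears delicate; the only care needed is the eigenvalue check that rules out $s=0$.
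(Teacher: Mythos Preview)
Your proof is correct and follows essentially the same approach as the paper: both define $\bsig_h$ by solving the linear equation $\bsig_h(I-e^{-s}m\inv)=\btau_h$, invoke the recurrence from Lemma \ref{lem;joinings;extend tau to hn} for the power relation, and deduce continuity from that of $\btau$ and matrix inversion. Your version simply fills in more detail (the eigenvalue argument for invertibility and the explicit telescoping induction), and your parenthetical about $G$ acting freely on $\tup(x)$ is unnecessary since only the sufficiency direction is used.
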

\begin{proof}
	Let $h \in AM-M$ and write $h=a_sm$. By assumption on $h$, $s \ne 0$, so $I-e^{-s}m\inv$ is invertible. Define $$\bsig_h:= (I-e^{-s}m\inv)\inv \btau_h.$$ It follows from the recurrence formula for $\btau_h$ in Lemma \ref{lem;joinings;extend tau to hn} that $\bsig_{h^n} = \bsig_h$, and it satisfies the desired equality for $x \in Y$ by definition. It is continuous on $AM-M$ because $h \mapsto \btau_h$ is by Corollary \ref{cor;up cts on AM orbit}, as is the inversion $a_sm \mapsto (I-e^{-s}m\inv)\inv$.
\end{proof}

\begin{lemma}
	There exists $\bsig_0 \in U$ such that for all $h \in AM$ and all $x \in Y$, $$\tup(xh)u_{\bsig_0} = \tup(x)u_{\bsig_0}h.$$ 
\end{lemma}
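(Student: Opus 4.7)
The plan is to set $\bsig_0 := \bsig_{a_1}$ and show in stages that $\bsig_h = \bsig_0$ for every $h \in AM - M$, after which the case $h \in M$ is read off from the continuity of $h \mapsto \btau_h$ on all of $AM$ (Corollary \ref{cor;up cts on AM orbit}). Two tools do the main work: the power invariance $\bsig_{h^n} = \bsig_h$ and continuity of $\bsig$ on $AM - M$, both from Corollary \ref{cor; joinings; existence of bsig}.

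First, in the $A$-direction: for $s, s' > 0$ with $s/s' \in \Q$, write $s = pt$, $s' = qt$ with $p,q \in \Z$, so that $a_s = a_t^p$ and $a_{s'} = a_t^q$; then $\bsig_{a_s} = \bsig_{a_t} = \bsig_{a_{s'}}$. Density of such pairs and continuity of $\bsig$ on $A - \{1_G\}$ then force $\bsig_{a_s}$ to be constant on the connected component $\{s > 0\}$, equal to $\bsig_0$. The other component $\{s < 0\}$ is bridged by the identity $\bsig_{h^{-1}} = \bsig_h$: from $\tup(x) = \tup(xh^{-1}h)$ together with the defining property of $\btau_h$ one derives $\btau_{h^{-1}} = -\btau_h e^s m$ for $h = a_s m$, and substituting this into $\bsig_h = (I - e^{-s}m\inv)\inv \btau_h$ yields $\bsig_{h^{-1}} = \bsig_h$; applied to $h = a_1$ this gives $\bsig_{a_{-1}} = \bsig_0$, extending constancy to all $s \neq 0$.

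Second, since $A$ is central in $AM$, if $h = a_s m$ with $m$ of finite order $k$ then $h^k = a_{ks} m^k = a_{ks}$, so $\bsig_h = \bsig_{h^k} = \bsig_{a_{ks}} = \bsig_0$. Because torsion elements are dense in $M = \SO(n-1)$ (every element lies in a maximal torus, in which rational-angle elements are dense), for any $h = a_s m \in AM - M$ one can choose torsion $m_j \to m$, producing $h_j = a_s m_j \to h$ in $AM - M$ with $\bsig_{h_j} = \bsig_0$. Continuity of $\bsig$ on $AM - M$ then gives $\bsig_h = \bsig_0$ throughout.

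Finally, for $h \in AM - M$ the desired identity $\tup(xh) u_{\bsig_0} = \tup(x) u_{\bsig_0} h$ is immediate from the defining property of $\bsig_h = \bsig_0$. For $h = m \in M$, rearranging the formula in the $AM - M$ case gives $\btau_{a_s m} = \bsig_0(I - e^{-s} m\inv)$ for $s \neq 0$; letting $s \to 0$ and invoking continuity of $\btau$ on all of $AM$ from Corollary \ref{cor;up cts on AM orbit} yields $\btau_m = \bsig_0(I - m\inv)$, and a direct computation using $m u_{\bsig_0} = u_{\bsig_0 m\inv} m$ then produces $\tup(xm) u_{\bsig_0} = \tup(x) u_{\bsig_0} m$. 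The only real subtlety I anticipate is the disconnectedness of $A - \{1_G\}$, which forces us to use the $\bsig_{h^{-1}} = \bsig_h$ computation rather than pure continuity to link the two components; the rest is systematic bookkeeping with the commutation relation $u_\t h = h u_{\t e^s m}$.
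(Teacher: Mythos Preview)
Your proof is correct but takes a genuinely different route from the paper's. The paper argues dynamically: it uses Birkhoff's ergodic theorem together with a density-point argument on the Lusin set $K$ to produce a point $x \in K \cap Y$ and some $a \in A' - \{1_G\}$ with $xa^{n_k} \to x$ along a subsequence remaining in $K \cap Y$. It then computes $\tup(xa^{n_k}h)$ in two ways and passes to the limit, exploiting the contraction $a^{-n_k}u a^{n_k} \to 1_G$ to force $\bsig_h = \bsig_a =: \bsig_0$ for every $h \in AM - M$. Your argument, by contrast, is purely group-theoretic: the power identity $\bsig_{h^n} = \bsig_h$, the inverse identity $\bsig_{h^{-1}} = \bsig_h$ (from the cocycle relation for $\btau$), and the density of torsion in the compact connected group $M = \SO(n-1)$ combine with continuity to pin down $\bsig$ as constant, without any recurrence or ergodic-theoretic input at this stage.

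Your approach is cleaner and more elementary here, avoiding the Birkhoff/density-point machinery entirely. The paper's argument, on the other hand, does not rely on the density of torsion elements in $M$, so it would transfer more readily to settings where $M$ might fail to be compact connected. One small point you should make explicit: the derivation of $\btau_{h^{-1}} = -e^s\btau_h m$ from $\tup(x) = \tup((xh^{-1})h)$ requires applying the formula from Corollary~\ref{cor;up cts on AM orbit} at the point $xh^{-1}$, which needs $xh^{-1} \in Y$. This is harmless---since $Y$ and $Yh$ are both $\br$-conull and $\br$ is $AM$-quasi-invariant, one may pick $x \in Y \cap Yh$, and as $\btau$ is constant in $x$ this suffices---but it deserves a sentence.
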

\begin{proof}
	We will first how prove the lemma under the assumption that there exists $x \in K \cap Y$, a sequence $n_k\to \infty$, and $a \in A'-\{1_G\}$ such that:
	\begin{itemize}
		\item $xa^{n_k}\in K \cap Y$ for all $k$, and
		\item $xa^{n_k} \to x$.
	\end{itemize}
%	Let $\{U_n\}$ be a countable basis for the topology of $X_2$. Applying Birkhoff's theorem to the sets $U_n \cap Y$ (note $\bms(Y)=1$ by $U$-invariance, (\ref{eqn;reln bt br and bms})) under the action of $a \in A' - \{1_G\}$ tells us that there exists a set $Z \subseteq Y$ with $\bms(Z)=1$ such that for every $x \in Z$, $\{xa^n:n \ge 1\}$ is dense in $Y$.
	
%	Let $x \in Z$. Then, since $x \in Y$, the density of $\{xa^n : n \ge 1\}$ implies that there exist $n_k \to \infty$ such that \begin{itemize}
%		\item $xa^{n_k} \in Y$ for all $k$, and 
%		\item $xa^{n_k} \to x$.
%	\end{itemize} Define $\bsig_0 = \bsig_a$ and recall that $\bsig_{a^n} = \bsig_a$ for all $n \in \N$ by Corollary \ref{cor; joinings; existence of bsig}. 
	
	First, consider $h \in AM-M$. Since $x, xa^{n_k} \in K \cap Y$, we have that $$\tup(xa^{n_k}h) = \tup(xa^{n_k})u_{\bsig_h}hu_{\bsig_h}\inv \to \tup(x)u_{\bsig_h}hu_{\bsig_h}\inv = \tup(xh).$$ 
	
	On the other hand, \begin{align*}
	\tup(xa^{n_k}h) &= \tup(xh)u_{\bsig_0}a^{n_k} u_{\bsig_0}\inv\\
	&= \tup(x)u_{\bsig_h}hu_{\bsig_h}u_{\bsig_0}a^{n_k}u_{\bsig_0}\inv\\
	&= (\tup(x)u_{\bsig_0}a^{n_k}u_{\bsig_0}\inv)u_{\bsig_0}(a^{-n_k}u_{\bsig_0}\inv u_{\bsig_h}h u_{\bsig_h}\inv u_{\bsig_0}a^{n_k}) u_{\bsig_0}\inv\\
	&= \tup(xa^{n_k})u_{\bsig_0}(a^{-n_k}u_{\bsig_0}\inv u_{\bsig_h}hu_{\bsig_h}\inv u_{\bsig_0} a^{n_k})u_{\bsig_0}\inv\\
	&\to \tup(x)u_{\bsig_0} h u_{\bsig_0}\inv
	\end{align*} where the convergence follows because $a \ne 1_G$ means that $a^{-n_k}ua^{n_k} \to 1_G$ for all $u \in U$. Thus, $\bsig_h = \bsig_0$ for all $h \in AM-M$.
	
	The statement then follows for all $h \in AM$ using the continuity in Corollary \ref{cor;up cts on AM orbit}.
	
	We will now show how to establish the existence of such $x$ and $n_k \to \infty$. Let $K' \subseteq K$ be a compact set consisting of density points of $K$ and satisfying $$\bms(K')>0.9\bms(K).$$ That is, for all $x \in K'$, there exists $r_x>0$ such that for all $r\le r_x$, $$\bms(xB(r)\cap K) > \frac{1}{2}\bms(xB(r)),$$ where $B(r)$ denotes the ball of radius $r$ in $G$; see \S\ref{section; prelims}.
	
	Let $\{x_n:n\in\N\}$ be a countable dense subset of $K'$. For $m,k \in \N$, define $$f_{m,k}:= \1_{x_mB(1/k)\cap K\cap Y}.$$ Let $a \in A' - \{1_G\}$. By Birkhoff's theorem applied to the family $\{f_{m,k}\}$, there exists $Z \subseteq X$ with $\bms(Z) = 1$ such that for all $x \in Z$ and all $m,k$, there exists a sequence $n_j \to \infty$ such that for every $j$, $$xa^{n_j} \in x_mB(1/k)\cap K \cap Y.$$
	
	Now, let $x \in K' \cap Y \cap Z$. By the density of $\{x_n\}$, there exists a subsequence $x_{m_j} \to x$. Then, since $x \in Z$, we can find $n_k \to \infty$ such that $$xa^{n_k} \in xB\left(\frac{1}{k}+d(x,x_{m_k})\right) \cap K \cap Y.$$ This establishes the existence of such $x, n_k$, completing the proof.
\end{proof}

\subsection{$AMU$-equivariant implies $U^-$-equivariant}\label{section; U-equiv}

In this section, we establish Theorem \ref{thm;rigidity;U- main thm}. %Existence of a $\bms$-conull set satisfying the statement of Theorem \ref{thm;rigidity;U- main thm} follows almost identically as in \cite[Theorem 6.1]{joinings}, with only slight modifications required due to the arbitrary dimension in our case. For this reason, we refer the reader to \cite{joinings} for the details, and provide here only the changes that need to be made. We will then explain how to adjust the argument to get a $\br$-conull set.

We will first show how the following proposition, which is identical to Theorem \ref{thm;rigidity;U- main thm} except for the use of $\bms$ instead of $\br$, follows from the proof of \cite[Theorem 6.1]{joinings}, with only slight modifications required due to the arbitrary dimension in our case. 

\begin{proposition}\label{prop; U- bms}
	Let $\hat{\Up}(x) = \tup(x)u_{\bsig_0},$ where $\tup$ is as in Theorem \ref{thm; rigidity; AM part main thm}. There exists a $\bms$-conull set $X_2'' \subseteq X_2'$ such that for all $x \in X_2''$ and for every $v_\r \in U^-$, we have $$\hat{\Up}(xv_\r) = \hat{\Up}(x)v_\r.$$
\end{proposition}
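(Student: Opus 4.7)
The plan is to follow the proof of \cite[Theorem 6.1]{joinings} closely, adapting constants and estimates to allow arbitrary $n$. The essential input from Theorem \ref{thm; rigidity; AM part main thm} is $AMU$-equivariance of $\hat{\Up}$ on a $\br$-conull (hence $\bms$-conull, since both sets in question are $U$-invariant) set $Y$, together with continuity of $\hat{\Up}$ on each $xAM$-orbit.

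My first step would be to exploit the $A$-equivariance to reduce to arbitrarily small elements of $U^-$. The commutation $v_\r a_t = a_t v_{\r e^{-t}}$, combined with $\hat{\Up}(xa_t) = \hat{\Up}(x)a_t$, shows that the desired identity $\hat{\Up}(xv_\r) = \hat{\Up}(x)v_\r$ is equivalent to $\hat{\Up}(xa_t \cdot v_{\r e^{-t}}) = \hat{\Up}(xa_t)\cdot v_{\r e^{-t}}$ for every $t > 0$. Applying Lusin's theorem I would produce a compact set $K' \subseteq Y$ with $\bms(K')$ as close to $1$ as we like, on which each $\up_i$, and hence $\hat{\Up}$, is uniformly continuous. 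By Poincar\'e recurrence of the $A$-action with respect to $\bms$ (using the finiteness of $\bms$), for $\bms$-a.e.\ $x$ there exists a sequence $t_n \to \infty$ such that $xa_{t_n} \in K'$; along this sequence $xa_{t_n}$ and $xa_{t_n}v_{\r e^{-t_n}}=xv_\r a_{t_n}$ become arbitrarily close.

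The heart of the argument is then to compare $\hat{\Up}(xv_\r a_{t_n})$ with $\hat{\Up}(xa_{t_n})$. Using the $AMU$-equivariance at $x$ (and at $xv_\r$, if $xv_\r \in Y$, which may be arranged by intersecting with a countable family of translates) this comparison reduces to the Hausdorff distance between $\hat{\Up}(xv_\r)a_{t_n}$ and $\hat{\Up}(x)a_{t_n}$. Since the base points $xa_{t_n}$ and $xv_\r a_{t_n}$ lie in $K'$ and are close, uniform continuity of $\hat{\Up}$ on $K'$ forces the $X_1$-distance between corresponding elements to be small, and conjugating back by $a_{-t_n}$ collapses any $U$-component and any $AM$-component of the discrepancy while leaving the $U^-$-component unchanged. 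The combination of this contraction, the polynomial control of orbit divergence, and the minimum-distance property of Lemma \ref{lem; rho0 min dist} then pins down the discrepancy to exactly $v_\r$, yielding $\hat{\Up}(xv_\r) = \hat{\Up}(x)v_\r$ on a $U$-invariant $\bms$-conull set $X_2'' \subseteq X_2'$.

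The step I expect to be the main obstacle is the matching of elements in the set-valued case: for each $n$, the bijection between $\hat{\Up}(xv_\r a_{t_n})$ and $\hat{\Up}(xa_{t_n})$ that realises the small Hausdorff distance may depend on $n$, so a pigeonhole argument in the spirit of Corollary \ref{cor; J(t,i,k(i)) stuff for qijs} is needed to stabilise the identification along a subsequence. Once this is done, the uniqueness argument of Proposition \ref{prop;existence of tau}, using that the candidate element of $U$ must lie in a ball strictly smaller than $\rho_0$, rules out ambiguity and closes the proof. The only adjustments relative to \cite[\S 6]{joinings} are in the dimensional constants ($\alpha, C', R, \rho_0$ and the like), which are absorbed exactly as in \S\ref{section; AMU equiv} above.
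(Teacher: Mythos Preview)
Your proposal has a genuine gap, and it stems from a misreading of how conjugation by $a_{t_n}$ acts on the factors of $G$. With the conventions of \S\ref{section; prelims}, the map $g\mapsto a_{t_n}ga_{-t_n}$ contracts $U$, fixes $AM$, and \emph{expands} $U^-$; it does not ``leave the $U^-$-component unchanged.'' Tracing through your argument: if $\up_i(x)a_{t_n}$ and $\up_j(xv_\r)a_{t_n}$ are close in $X_1$ with discrepancy $g_n$, then $\up_j(xv_\r)=\up_i(x)\,a_{t_n}g_na_{-t_n}$, and once the matching $j$ is stabilised this element $h:=a_{t_n}g_na_{-t_n}$ is a fixed group element. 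Writing $h$ in $U\cdot AM\cdot U^-$ coordinates and using $g_n=a_{-t_n}ha_{t_n}\to 1_G$, you correctly force the $U$ and $AM$ parts of $h$ to vanish, so $h=v_{\r'}\in U^-$. But this is where the argument stops: uniform continuity of $\hat\Up$ on $K'$ gives only $|g_n|\le\omega(|\r|e^{-t_n})$ for some modulus $\omega$, and since the $U^-$ part of $g_n$ is $v_{\r'e^{-t_n}}$, the inequality $|\r'|e^{-t_n}\le\omega(|\r|e^{-t_n})$ places \emph{no constraint at all} on $\r'$ unless $\omega$ is Lipschitz, which Lusin's theorem does not provide. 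So you obtain $\hat\Up(xv_\r)\subset\hat\Up(x)U^-$ but not $\hat\Up(xv_\r)=\hat\Up(x)v_\r$. Lemma~\ref{lem; rho0 min dist}, which you invoke to close the argument, concerns separation in the $U$ direction and is simply irrelevant here.

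The paper's route, following \cite[Prop.~6.4]{joinings}, is structurally different and supplies exactly the missing quantitative input. One does not compare $x$ and $xv_\r$ along an $A$-orbit; instead one writes $v_{e^{-s}\r}u_\t=u_{\beta_\r(\t)}\,g_\r(\t)$ with $g_\r(\t)\in AMU^-$, averages over a $U$-ball of radius $e^s$ using the PS measure (in the spirit of Lemma~\ref{lemma; new 6.2} and the set $\Omega_\eta$), applies the already-established $AMU$-equivariance to dispose of the $AM$ and $U$ parts of $g_\r$, and then carries out the matrix computation showing $d(1_G,a_sg_sa_{-s})=O(e^{-s})$. It is this explicit $O(e^{-s})$ rate, not a qualitative continuity statement, that identifies the $U^-$-displacement as precisely $v_\r$. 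The only dimension-dependent changes are the formula for $\beta_\r(\t)$ and the bookkeeping in the matrix estimate, both of which the paper spells out.
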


We refer the reader to the proof of \cite[Theorem 6.1]{joinings} for the details, and provide here only the changes that need to be made to accommodate the arbitrary dimension. 

The heart of the proof of \cite[Theorem 6.1]{joinings} is \cite[Prop.\ 6.4]{joinings}, and it is here where all changes due to dimension appear. The first difference is the time-change map $\beta_\r(\t)$. In this case, the necessary time change is given by \[\beta_\r(\t) = \dfrac{\t + \frac{1}{2}|\t|^2e^{-s}\r}{1+e^{-s}\t\cdot \r + \frac{1}{4}e^{-2s}|\t|^2|\r|^2}.\] It is chosen so that $$v_{e^{-s}\r} u_\t = u_{\beta_{\r}(\t)}g_\r$$ where $g_\r \in AMU^-$, using the notation as in equation (6.11) in the proof of \cite[Prop.\ 6.4]{joinings}. Direct computation shows that for $|\t|\le e^{s}$ and $|\r|<\eps$, we still have that \begin{align*}
|\beta_\r(\t)|&\le \dfrac{|\t|+\frac{1}{2}|\t|^2e^{-s}|\r|}{1+\frac{1}{4}e^{-2s}|\t|^2|\r|^2 - e^{-s}|\t||\r|} \\
&\le \frac{ e^{s} + \frac{1}{2}|\r|}{1-e^{-s}|\t||\r|} \\
&\le \frac{e^{s} + \frac{1}{2}\eps}{1-e^{-s/2}\eps} \\
&= e^{s}+O(\eps),
\end{align*} so the proof carries through, up to Step 4.

In Step 4, the matrix computation to prove equation (6.19) in the proof of \cite[Prop.\ 6.4]{joinings} is more cumbersome, but not fundamentally different. We provide an outline of the approach below.

For $g_s:=g_{s,i}$ in that proof, write \[g_s = \begin{pmatrix} 1 & & \\ \v^T & I & \\ \frac{1}{2}|\v|^2 & \v & 1 \end{pmatrix} \begin{pmatrix} \lambda & & \\ & C & \\ & & \lambda\inv\end{pmatrix} \begin{pmatrix} 1 & \w & \frac{1}{2}|\w|^2 \\ & I & \w^T \\ & & 1\end{pmatrix}\] for some row vectors $\v, \w \in \R^{n-1}$, $C \in M, \lambda > 0$. Multiplying it out gives \[g_s =\begin{pmatrix} \lambda & \lambda \w & \frac{1}{2}\lambda|\w|^2 \\\lambda\v^T & \lambda \v^T \w + C & \frac{1}{2}\lambda|\w|^2\v^T + C\w^T \\ \frac{1}{2}\lambda |\v|^2 & \frac{1}{2}\lambda |\v|^2\w + \v C & \frac{1}{4}|\v|^2|\w|^2+\v C\w^T + \lambda\inv\end{pmatrix}\] Writing $g_s = \begin{pmatrix} a & \q & b \\ \x^T & B & \z^T \\ c & \y & d \end{pmatrix}$, we investigate the entries of $I-u_{-\t}g_s u_\t$. We still have that \begin{equation}d(1_G, u_{-\t}g_s u_\t)=O(1) \text{ for all }\t \in B_U(e^{s}).\label{eqn;g_s bigO1}\end{equation}

The magnitude of the (3,1) entry is $$\left|\frac{1}{2}|\t|^2a-\t\cdot \x + c +\frac{1}{2}|\t|^2(\q\cdot \t)-\t B\t^T + \y\cdot\t + \frac{1}{4}|\t|^4b - \frac{1}{2}|\t|^2(\t\cdot \z) + \frac{1}{2}|\t|^2 d\right|.$$ By considering the $t^4$ term and equation (\ref{eqn;g_s bigO1}), we conclude that $$|b|=O(e^{-4s}).$$

Using this and the $t^2$ terms of the (2,1) entry $$\left|\x^T - a\t^T - \t^T(\q\cdot \t) + B\t^T -\frac{1}{2}|\t|^2b\t^T + \frac{1}{2}|\t|^2\z^T\right|,$$ we similarly conclude that $$\left|\frac{1}{2}|\z| - |\q|\right|=O(e^{-2s}).$$ Using the (3,2) entry $$\left|\frac{1}{2}|\t|^2\q - \t B + \y +\frac{1}{2}|\t|^2b\t - (\t \cdot \z)\t + d\t\right|,$$ we get $$\left|\frac{1}{2}|\q| - |\z|\right|=O(e^{-2s}).$$

Continuing in this manner, we end up with the following conclusions:
\begin{enumerate}
	\item $|b| = O(e^{-4s})$ by the (3,1) entry
	\item $|\frac{1}{2}|\z| - |\q||=O(e^{-2s})$ from the (2,1) entry
	\item $|\frac{1}{2}|\q| - |\z||=O(e^{-2s})$ by the (3,2) entry
	\item $||\q|-|\z||=O(e^{-3s})$ from the (3,1) entry
	\item $|\q| = O(e^{-2s})$ and $|\z| = O(e^{-2s})$ using the three lines above
	\item $||B|-|a||=O(e^{-s})$ from the (2,1) entry
	\item $||B|-|d||=O(e^{-s})$ from the (3,2) entry
	\item $||a|-|d|| = O(e^{-s})$ from the two lines above
\end{enumerate}
We will further show that $$d=\lambda\inv + O(e^{-s}), \lambda = 1+O(e^{-s}) \text{ and }|B-I| = O(e^{-s}).$$ From these facts, it will follow that $d(1_G, a_sg_sa_{-s})=O(e^{-s})$, completing the proof of Proposition \ref{prop; U- bms}.

Because $d(1_G,u_{-\t}g_su_\t)=O(1)$, we know that $\lambda = 1+O(1)$, so $|\lambda \v| = O(1)$ implies that $$|\v| = O(1).$$ Similarly, $$|\w|= O(1).$$ Then, from the fact that $C = I-\lambda \v^T \w + O(1)$, we conclude $$C = I+O(1).$$ Thus, \begin{equation}d = \frac{1}{4}|\v|^2|\w|^2 + \v C \w^T + \lambda\inv = \lambda\inv + O(e^{-s}).\label{eqn;matrix computation d}\end{equation}

Now, using that $a = \lambda$, $||a|-|d|| = O(e^{-s})$, and $d = \lambda\inv + O(e^{-s})$ by equation(\ref{eqn;matrix computation d}), it follows that $$|\lambda - \lambda\inv| = O(e^{-s}).$$ This in turn implies (because $\lambda = 1 + O(1)$) that $$|\lambda^2 - 1| = O(e^{-s}),$$ so $\lambda^2 = 1 + O(e^{-s})$. Using the Taylor series for $\sqrt{1+x}$, we conclude that $$\lambda = 1+O(e^{-s}).$$

Combining the above with $|B-aI| = |B-\lambda I| = O(e^{-s})$ implies $$|B-I|=O(e^{-s}),$$ as desired.

We will now explain how to slightly change the proof of \cite[Theorem 6.1]{joinings} to yield a $U$-invariant $\bms$-conull set, hence a $\br$-conull set in light of equation (\ref{eqn;reln bt br and bms}). This will establish Theorem \ref{thm;rigidity;U- main thm}.

Let $K_\eta$ be the compact subset chosen in equation (6.4) in \cite{joinings} and $\Omega_\eta$ as in equation (6.6). More specifically, $$\Omega_\eta \subseteq \{x:x^-\in \Lambda_{\operatorname{r}}(\Gamma)\}$$ is a compact set with $$\bms(\Omega_\eta)>1-\eta$$ such that there exists $T_\eta>1$ so that for every $x \in \Omega_\eta$ and $T \ge T_\eta$, \begin{equation}
\frac{1}{\ps_x(B_U(T))} \int_{B_U(T)} \1_{K_\eta}(xu_\t)d\ps_x(\t) \ge 1-2\eta.
\end{equation}

We will need to thicken $\Omega_\eta$ slightly in the $U$-direction. By \cite[Lemma 4.4]{joinings}, there exists $r_0>0$ and $R>0$ such that for all $x \in \Omega_\eta$ and all $T \ge R$, \begin{equation}
\label{eqn; defn r0}
\ps_x(B_U(T+r_0) - B_U(T-r_0))<\eta \ps_x(B_U(T)).\end{equation} 

\begin{remark}
	\emph{Despite the stated dependence in \cite[Lemma 4.4]{joinings}, $R$ is in fact independent of $x$. The apparent dependence in that statement arises from \cite[Theorem 4.1]{joinings}, which is actually weaker than the result cited from \cite{schapira}. The original proof in \cite{schapira} shows that there is no such dependence on the base point. }
\end{remark}

We will show that if $y \in \Omega_\eta B_U(r_0)$, then for all $T \ge T_\eta + r_0$, \begin{equation}\label{eqn; omegaeta fattened}
\frac{1}{\ps_y(B_U(T))}\int_{B_U(T)}\1_{K_\eta}(yu_\t) \ge 1-3\eta
\end{equation}

Equation (\ref{eqn; defn r0}) implies that \begin{align*}
\frac{1}{\ps_x(B_U(T-r_0))}-\frac{1}{\ps_x(B_U(T+r_0))} &\le \frac{\ps_x(B_U(T+r_0))-\ps_x(B_U(T-r_0))}{\ps_x(B_U(T))\ps_x(B_U(T-r_0))} \\
&< \frac{\eta}{\ps_x(B_U(T-r_0))}
\end{align*}

Suppose now that $x \in \Omega_\eta$, $u \in B_U(r_0)$ and $T \ge T_\eta+ r_0$. Then $$xuB_U(T) \subseteq xB_U(T+r_0),$$ so together with the above we conclude that \begin{equation}\frac{1}{\ps_{xu}(B_U(T))} \ge \frac{1}{\ps_x(B_U(T+r_0))}\ge \frac{1-\eta}{\ps_x(B_U(T-r_0))} \label{eqn;xu to x r0}
\end{equation} and therefore \begin{align*}
&\frac{1}{\ps_{xu}(B_U(T))} \int_{B_U(T)}\1_{K_\eta}(xuu_\t)d\ps_{xu}(\t) \\
 {}^{\text{by (\ref{eqn;xu to x r0})} \leadsto}&\ge \frac{1-\eta}{\ps_x(B_U(T-r_0))} \int_{B_U(T-r_0)} \1_{K_\eta}(xu_\t)d\ps_x(\t) \\
&\ge 1-3\eta,
\end{align*} which establishes (\ref{eqn; omegaeta fattened}).

Thus, by adjusting constants slightly, we can use $\Omega_\eta B_U(r_0)$ in place of $\Omega_\eta$ in \cite[Prop.\ 6.4]{joinings}. Now, as in the proof of \cite[Theorem 6.1]{joinings}, by Birkhoff's theorem, there exists an $A$-invariant $\bms$-conull set $Z$ such that for all $x \in Z$, \[\frac{1}{T}\int_0^T \1_{\Omega_\eta}(xa_s)ds = \bms(\Omega_\eta)>0.9.\]

Let $x \in Z$ and $u_\t \in U$. Suppose that $|e^{-s}\t|<r_0$ and that $xa_s \in \Omega_\eta$. Then $$xu_\t a_s = xa_su_{e^{-s}\t} \in \Omega_\eta B_U(r_0).$$ From this, we conclude that if $x \in ZU$, it will have infinitely many returns under $A$ to the set $\Omega_\eta B_U(r_0)$, the set which replaced $\Omega_\eta$ in \cite[Prop.\ 6.4]{joinings}. Thus, if we define $$X_2'':= ZU,$$ then the proof of \cite[Theorem 6.1]{joinings} carries through. By equation (\ref{eqn;reln bt br and bms}), $X_2''$ is $\br$-conull, so we have established Theorem \ref{thm;rigidity;U- main thm}.

\section{Non-concentration of the PS measure near varieties}\label{section; varieties}

In this section, we prove several lemmas showing that PS measure does not concentrate near varieties. This will be needed in the next section, and is a key step for extending the results from \cite{joinings} to higher dimensions. The main result is Lemma \ref{lem;int over V small, abs cty}.

In the geometrically finite case, we will need to control the PS measure of the unit ball in $U$ based at a point that may be far out in a cusp. We will use a variation Sullivan's shadow lemma for this purpose.

Suppose that $\Gamma \backslash \H^n$ is geometrically finite. For $\xi \in \Lambda_{\bp}(\Gamma)$, let $g \in G$ be such that $g^- = \xi$. For $R>0$, define \[\mathcal{H}(\xi,R)=\bigcup\limits_{s>r}gUa_{-s}K,\] where $K$ is the maximal compact subgroup $\stab_G(o)$ as in \S\ref{section; ps}. The \emph{rank} of the horoball $\mathcal{H}(\xi,r)$ is the rank of $\stab_\Gamma(\xi)$, which is a finitely generated abelian group. It is always strictly less than $2\delta_\Gamma$.

As in \cite{bowditch}, it follows from the thick-thin decomposition of the convex core that there exists a compact set $\K_0$, a constant $R_0\ge1$, and a finite set $\{\xi_1,\ldots, \xi_m\} \subseteq \Lambda_{\bp}(\Gamma)$ such that \begin{equation}
\label{eqn; thickthin}
\supp\bms \subseteq \K_0 \sqcup \left(\bigsqcup\limits_{i=1}^m \Gamma \backslash \Gamma \mathcal{H}(\xi_i, R_0)\right)
\end{equation}

The following version of Sullivan's shadow lemma is due to Maucourant and Schapira, \cite{MacShap}:

\begin{lemma}\cite[Lemma 5.1, Remark 5.2]{MacShap}
	There exists a constant $R \ge 1$ such that for all $x \in \supp\bms$, \[R\inv T^{\delta_\Gamma}e^{(r - \delta_\Gamma)d(xa_{-\log T}, \K_0)} \le \ps_x(B_U(T)) \le R T^{\delta_\Gamma}e^{(r - \delta_\Gamma)d(xa_{-\log T}, \K_0)},\] where $r$ is the rank of the cusp containing $xa_{-\log T}$, and is zero if $xa_{-\log T} \in \K_0$.
	\label{lem; shadow lem}
\end{lemma}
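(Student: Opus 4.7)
The plan is to follow the standard Sullivan-style shadow lemma argument, but keeping careful track of how the estimate degrades when $xa_{-\log T}$ lies deep in a cusp, which is precisely where the correction factor $e^{(r-\delta_\Gamma)d(xa_{-\log T},\K_0)}$ enters. The starting observation is the scaling identity
\[
\ps_x(B_U(T)) \;=\; T^{\delta_\Gamma}\,\ps_{xa_{-\log T}}(B_U(1)),
\]
which follows directly from the conformality of the Patterson--Sullivan density. So it suffices to prove that for every $y\in\supp\bms$,
\[
\ps_y(B_U(1)) \;\asymp\; e^{(r-\delta_\Gamma)d(y,\K_0)},
\]
with implied constants independent of $y$, where $r$ is the rank of the cusp in which $y$ sits (and $r=0$ if $y\in\K_0$).

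First I would handle the thick case $y\in\K_0$. Here $y^-\in\Lambda(\Gamma)$ by definition of $\supp\bms$, so Corollary \ref{cor; inf sup of balls over compact} (or equivalently Lemma \ref{lem;PS inf kappa for compact set}) applied to a compact neighbourhood of $\K_0$ yields uniform positive upper and lower bounds for $\ps_y(B_U(1))$, giving the estimate with the exponent $0$. This reduces everything to the case where $y$ lies in one of the finitely many standard horoball neighbourhoods $\Gamma\backslash\Gamma\mathcal{H}(\xi_i,R_0)$ appearing in the thick--thin decomposition (\ref{eqn; thickthin}).

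The heart of the argument is then a direct computation inside a single cusp. Fix $\xi=\xi_i$ with $g\in G$ satisfying $g^-=\xi$, and write $y=\Gamma g u a_{-s}k$ with $s>R_0$ and $u\in U$; the distance $d(y,\K_0)$ is then comparable to $s$. Using the definition of $\ps_y$ as the pushforward of $e^{\delta_\Gamma\beta_{\cdot}(o,\cdot)}d\nu_o$ and the transformation law for $\nu_o$ under the parabolic stabiliser $P:=\stab_\Gamma(\xi)$, the PS-mass of $B_U(1)$ based at $y$ is comparable to a sum
\[
\sum_{\gamma\in P/P_y} e^{-\delta_\Gamma\,d(o,\gamma o)}\,\mathbf{1}_{\{\gamma o\ \in\ \text{shadow of }yB_U(1)\}},
\]
where the shadow is, up to bounded error, a Euclidean ball of radius $e^{s}$ in the horosphere through $\xi$. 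Since $P$ acts as a rank-$r$ crystallographic group on that horosphere, the number of lattice points inside a ball of radius $e^{s}$ is comparable to $e^{rs}$, and each contributes roughly $e^{-\delta_\Gamma s}$; combining gives $\ps_y(B_U(1))\asymp e^{(r-\delta_\Gamma)s}$.

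The main obstacle is the lower bound in the cusp: one must show that a definite proportion of the parabolic orbit lands in the shadow and that the PS density $\nu_o$ gives nontrivial mass to a neighbourhood of each such orbit point. The upper bound follows from Sullivan's global shadow covering argument applied to $P\cdot o$. The lower bound requires Bowditch's cusp-thickness estimates plus Zariski density to guarantee that $\nu_o$ charges a definite neighbourhood of each parabolic fixed point's orbit; this is exactly the content that Maucourant and Schapira extract from the earlier work of Schapira, and it is the step where the hypothesis $r<2\delta_\Gamma$ (guaranteed by geometric finiteness) is used implicitly to ensure that the relevant Poincar\'e-type series converges. Once both bounds are assembled uniformly over the finitely many cusps and combined with the thick-part estimate via the thick--thin decomposition, the scaling identity at the start delivers the full statement.
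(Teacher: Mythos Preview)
The paper does not supply its own proof of this lemma: it is stated as a direct citation of \cite[Lemma 5.1, Remark 5.2]{MacShap} and immediately followed by Corollary~\ref{cor; extended shadow lem} with no intervening proof environment. So there is nothing in the paper to compare your argument against; the author simply imports the result.

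That said, your outline is the standard route to this statement and is essentially what Maucourant--Schapira (and before them Sullivan, Stratmann--Velani, and Schapira) carry out. The reduction via the conformal scaling identity $\ps_x(B_U(T))=T^{\delta_\Gamma}\ps_{xa_{-\log T}}(B_U(1))$ is correct, the thick-part bound via compactness is exactly Corollary~\ref{cor; inf sup of balls over compact}, and the cusp computation by counting rank-$r$ parabolic lattice points in a shadow of Euclidean radius $\asymp e^s$ is the right mechanism. Your sketch is informal at the two genuinely delicate points --- showing the shadow of $yB_U(1)$ really is, up to bounded distortion, a Euclidean ball of the claimed radius in the horosphere coordinates, and the lower bound in the cusp (which needs that $\nu_o$ gives uniformly positive mass to shadows of bounded-diameter sets based at points of $\K_0$, itself a consequence of the basic shadow lemma on the thick part) --- but these are precisely the lemmas worked out in the cited references, so as a summary of the literature argument your proposal is accurate.
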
 

Let \begin{equation}N_{1/2}(\supp\bms):=(\supp\bms)B_U(1/2). \label{defn of N1suppbms}
\end{equation}  We will need to extend Lemma \ref{lem; shadow lem} to the following:

\begin{corollary}
	Suppose that all cusps have rank $n-1$. There exists a constant $R>0$ such that for all $x \in N_{1/2}(\supp\bms)$, and all $T>1$ we have
\[R\inv T^{\delta_\Gamma}e^{(r - \delta_\Gamma)d(xa_{-\log T}, \K_0)} \le \ps_x(B_U(T)) \le R T^{\delta_\Gamma}e^{(r - \delta_\Gamma)d(xa_{-\log T}, \K_0)},\]where $r$ is the rank of the cusp containing $xa_{-\log T}$, and is zero if $xa_{-\log T} \in \K_0$.
\label{cor; extended shadow lem}
\end{corollary}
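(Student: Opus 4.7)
The plan is to deduce the bounds at $y \in N_{1/2}(\supp\bms)$ by a short $U$-perturbation from a point of $\supp\bms$ where Lemma \ref{lem; shadow lem} applies directly. Write $y = xu_s$ with $x \in \supp\bms$ and $u_s \in B_U(1/2)$. Since $U$ is abelian, $yu_\t = xu_{s+\t}$, so the PS measures on the common horocycle $xU = yU$ are related by the translation $\ps_y(E) = \ps_x(s + E)$ for every Borel $E \subseteq U$. Using the max norm and $|s|\le 1/2$, one has, for every $T > 1$,
\[
\ps_x(B_U(T-1/2)) \le \ps_y(B_U(T)) \le \ps_x(B_U(T+1/2)).
\]

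The next step is to apply Lemma \ref{lem; shadow lem} at $x$ with the two radii $T \pm 1/2$ and compare the resulting bounds to the desired expression in terms of $ya_{-\log T}$. For $T > 1$, $(T \pm 1/2)^{\delta_\Gamma} \asymp T^{\delta_\Gamma}$ with constants depending only on $\delta_\Gamma$. For the exponential factor, the identity $a_{-s}u_\t a_s = u_{\t e^s}$ from \S\ref{section; prelims} yields
\[
ya_{-\log T} = xu_s a_{-\log T} = xa_{-\log T} \, u_{s/T},
\]
so $ya_{-\log T}$ and $xa_{-\log T}$ differ by an element of $U$ of norm at most $1/2$. Likewise $xa_{-\log(T \pm 1/2)}$ differs from $xa_{-\log T}$ by $a_{\log(T/(T\pm 1/2))}$, which is uniformly bounded in $G$ for $T > 1$. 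Hence $d(xa_{-\log(T \pm 1/2)}, \K_0)$ and $d(ya_{-\log T}, \K_0)$ differ by at most some absolute constant $C_0$, producing a multiplicative discrepancy of at most $e^{|r-\delta_\Gamma|C_0}$ in the exponential factor.

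The final point to verify is that the rank $r$ appearing in the exponent is the same for $xa_{-\log(T \pm 1/2)}$ and $ya_{-\log T}$. For $T$ large enough that $ya_{-\log T}$ lies in a cusp region $\Gamma \bs \Gamma \mathcal{H}(\xi_i, R_0)$ of the decomposition (\ref{eqn; thickthin}), the bounded $U$-perturbation and bounded $A$-perturbation above are too small to leave that region, since distinct horoballs in (\ref{eqn; thickthin}) are $\Gamma$-separated at depth $R_0$; the full-rank hypothesis ensures that this common $r$ equals $n-1$ uniformly. For $T$ in a bounded range $(1, T_0]$, both sides of the desired inequality are bounded above and below by positive constants (e.g.\ using continuity of $g \mapsto \ps_g$ from Lemma \ref{lem;new cty ps measure} and Corollary \ref{cor; inf sup of balls over compact}), and these can be absorbed into $R$.

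The only non-routine obstacle is the rank stability under perturbation — i.e.\ verifying that the small $U$- and $A$-perturbations do not swap one cusp neighborhood for another; the full-rank assumption, together with the $\Gamma$-separation of horoballs in (\ref{eqn; thickthin}) for $R_0$ sufficiently large, is what makes the choice of $r$ uniform and the constant $R$ independent of $y$.
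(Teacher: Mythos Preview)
Your overall strategy is exactly the paper's: perturb $y$ to a nearby $x' \in \supp\bms$, sandwich $\ps_y(B_U(T))$ between $\ps_{x'}$-values at radii $T\pm 1/2$ (the paper uses $T\pm 1$), apply Lemma~\ref{lem; shadow lem} at $x'$, and compare the distance terms via $d(x'a_{-\log T},\K_0)=d(ya_{-\log T},\K_0)+O(1)$.

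The gap is in your rank argument. Your case split is ``$T$ large enough that $ya_{-\log T}$ lies in a cusp'' versus ``$T$ in a bounded range $(1,T_0]$'', but neither case holds as stated. Whether $ya_{-\log T}$ is in a cusp or in $\K_0$ is not governed by the size of $T$: for a radial limit direction the geodesic $\{ya_{-\log T}\}$ crosses the boundary $\partial\K_0$ for arbitrarily large $T$, so there is no $T_0$ beyond which $ya_{-\log T}$ is guaranteed to be in a cusp. And for $T\in(1,T_0]$ the two sides of the inequality are \emph{not} bounded uniformly in $y$, since $y\in N_{1/2}(\supp\bms)$ can itself sit arbitrarily deep in a cusp; Corollary~\ref{cor; inf sup of balls over compact} does not apply because $N_{1/2}(\supp\bms)$ is non-compact. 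Finally, the ``$\Gamma$-separation of horoballs'' remark addresses the wrong transition: with all cusps of rank $n-1$, swapping one cusp for another is harmless; the actual rank change is between $r=0$ (in $\K_0$) and $r=n-1$ (in a cusp), which your argument does not cover.

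The paper avoids this entirely by a direct case analysis on the rank $r'$ at the perturbed point $x'a_{-\log T}$. The key observation is that if $r'\ne r$ then one of the two points lies in $\K_0$, so its distance to $\K_0$ vanishes and the other distance is at most the $O(1)$ perturbation bound; hence the exponential factor $e^{(r-\delta_\Gamma)d(\cdot,\K_0)}$ differs by at most $e^{(n-1-\delta_\Gamma)\cdot O(1)}$, which is absorbed into $R$. Replacing your last two paragraphs with this case check (four short cases: $r'\in\{0,n-1\}$ for the upper and lower bounds) completes the proof.
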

\begin{proof}
	Let $x \in N_{1/2}(\supp\bms)$. By definition of $N_{1/2}(\supp\bms)$, there exists $$x' \in xB_U(1)\cap \supp\bms.$$ Thus, $$x'B_U(T-1) \subseteq xB_U(T) \subseteq x'B_U(T+1),$$ and so by Lemma \ref{lem; shadow lem} and since $T>1$, there exists $R_0>0$ such that \begin{align*}&R_0\inv (T/2)^{\delta_\Gamma}e^{(r' - \delta_\Gamma)d(x'a_{-\log T}, \K_0)} \\
	&\le \ps_x(B_U(T)) \\
	&\le R_0 (2T)^{\delta_\Gamma}e^{(r' - \delta_\Gamma)d(x'a_{-\log T}, \K_0)},\end{align*} where $r'$ is the rank of the cusp containing $x'a_{-\log T}.$ 
	
	Note that \begin{equation}
d(xa_{-\log T},\K_0) -  T\inv \le d(x'a_{-\log T},\K_0)\le d(xa_{-\log T},\K_0) +  T\inv
\label{eqn; kappatinv ineq}
	\end{equation}
	
	We will first consider the upper bound by cases. Suppose that $r'=0$. Then the upper bound yields \begin{align*}
	\ps_x(B_U(T))&\le R_0(2T)^{\delta_\Gamma} \le R_0(2T)^{{\delta_\Gamma}}e^{(r-{\delta_\Gamma})d(xa_{-\log T},\K_0)}e^{(n-1-\delta_\Gamma)},
	\end{align*} where the last inequality follows because if $r = 0$, $e^{(r-{\delta_\Gamma})d(xa_{-\log T},\K_0)}=1$, and otherwise, $e^{(r-{\delta_\Gamma})d(xa_{-\log T},\K_0)}>1$, and also $e^{(n-1-\delta_\Gamma)}\ge1.$
	
	Now, suppose instead that $r' = n-1$. Then by using (\ref{eqn; kappatinv ineq}), we obtain \begin{align*}\ps_x(B_U(T)) &\le R_0(2T)^{\delta_\Gamma} e^{(r'-\delta_\Gamma)d(xa_{-\log T},\K_0)}e^{(n-1-\delta_\Gamma)}\\
	&= R_0(2T)^{\delta_\Gamma} e^{(r-\delta_\Gamma)d(xa_{-\log T},\K_0)}e^{(n-1-\delta_\Gamma)} \end{align*} where the last equality is because either $r = n-1= r'$, or $r=0$, in which case $d(xa_{-\log T},\K_0)=0$. 
	
	We will now consider the lower bound by cases. Again, first suppose that $r'=0$. Then  $d(x'a_{-\log T},\K_0)=0$, so we have \begin{align*}
	\ps_x(B_U(T))&\ge R_0\inv (T/2)^{\delta_\Gamma}e^{(r'-\delta_\Gamma)d(x'a_{-\log T,}\K_0)} \\
		&= R_0\inv (T/2)^{\delta_\Gamma}e^{(r-\delta_\Gamma)d(x'a_{-\log T,}\K_0)}\\
		&\ge R_0\inv (T/2)^{\delta_\Gamma}e^{(r-\delta_\Gamma)d(xa_{-\log T,}\K_0)}e^{-(n-1-\delta_\Gamma)},\end{align*} where the last line follows from (\ref{eqn; kappatinv ineq}) if $r=n-1$, and if $r=0$, then $$e^{(r-\delta_\Gamma)d(xa_{-\log T,}\K_0)}=e^{(r'-\delta_\Gamma)d(xa_{-\log T,}\K_0)}=1$$ and $e^{-(n-1-\delta_\Gamma)} <1$.
		
	Thus, letting $R=R_0 2^{\delta_\Gamma}e^{(n-1-\delta_\Gamma)}$ establishes the claim.	
\end{proof}

\begin{corollary}
	Suppose that all cusps have rank $n-1$, and let $R$ be as in Corollary \ref{cor; extended shadow lem}. Then for every $y \in N_{1/2}(\supp\bms)$ and every $\eps>0$, we have
	
	%Let $K \subseteq N_1(\supp\bms)\cap \{x:x^-\in\Lambda_{\operatorname{r}}(\Gamma)\}$ be compact and let $\kappa = \kappa(K)$ as in Lemma \ref{lem;PS inf kappa for compact set}. 
	\begin{enumerate}
		\item $\ps_y(B_U(\eps)) \le R^2 \eps^{2\delta_\Gamma - n+1}\ps_y(B_U(1))$ if $\eps<1$,
		\item $\ps_y(B_U(\eps))\le R^2 \eps^{2\delta_\Gamma}\ps_y(B_U(1))$ if $\eps \ge1$.
	\end{enumerate}\label{cor; cor to shad}
\end{corollary}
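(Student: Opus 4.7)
The plan is to apply Corollary \ref{cor; extended shadow lem} twice and take a ratio. Using the upper bound with $T=\eps$ and the lower bound with $T=1$ yields
\[
\frac{\ps_y(B_U(\eps))}{\ps_y(B_U(1))} \;\le\; R^2\,\eps^{\delta_\Gamma}\exp\!\bigl((r_\eps - \delta_\Gamma)\,d(ya_{-\log\eps},\K_0) - (r_1 - \delta_\Gamma)\,d(y,\K_0)\bigr),
\]
where $r_\eps$ (resp.\ $r_1$) denotes the rank of the cusp containing $ya_{-\log\eps}$ (resp.\ $y$), set to $0$ when the point lies in $\K_0$. Under the full-rank hypothesis, $r_\eps,r_1 \in \{0,n-1\}$, and $r_* = 0$ is equivalent to $d(\cdot,\K_0)=0$. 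Controlling the exponent in the display above then amounts to proving (1) and (2).

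The geometric input is that the geodesic flow acts by isometries on $\H^n$, so $t \mapsto d(ya_t,\K_0)$ is $1$-Lipschitz; in particular
\[
|d(ya_{-\log\eps},\K_0) - d(y,\K_0)|\;\le\;|\log\eps|.
\]
A short case analysis on the four possibilities for $(r_\eps,r_1) \in \{0,n-1\}^2$ shows that in every case the exponent above is bounded by $(n-1-\delta_\Gamma)|\log\eps|$: when both ranks equal $n-1$ this is immediate from the Lipschitz estimate; when exactly one rank is zero the corresponding distance vanishes, and the Lipschitz bound applied to the other distance gives the same estimate; when both ranks are zero both distances vanish. Throughout I use $n-1-\delta_\Gamma \ge 0$, valid for any discrete subgroup.

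For part (1), $\eps<1$ gives $|\log\eps|=\log(1/\eps)$, so combining with the $\eps^{\delta_\Gamma}$ prefactor yields $\eps^{\delta_\Gamma}\cdot\eps^{-(n-1-\delta_\Gamma)} = \eps^{2\delta_\Gamma - n + 1}$, as desired. For part (2), $\eps\ge 1$ gives $|\log\eps|=\log\eps$, so the Lipschitz bound produces an exponent of $(n-1-\delta_\Gamma)\log\eps$, and I need this to be at most $\delta_\Gamma\log\eps$. This reduces to $\delta_\Gamma \ge (n-1)/2$, which is forced by the full-rank hypothesis since the rank of any cusp of $\Gamma$ is strictly less than $2\delta_\Gamma$.

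The only real obstacle is bookkeeping: one must keep track of the sign of $n-1-\delta_\Gamma$ and of $\log\eps$ across the four cases and across the two regimes $\eps<1$ and $\eps\ge 1$. Each individual estimate is one line, and the proof collects them.
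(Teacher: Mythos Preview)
Your proposal is correct and takes essentially the same approach as the paper: both apply Corollary~\ref{cor; extended shadow lem} at $T=\eps$ and $T=1$, use the $1$-Lipschitz bound $|d(ya_{-\log\eps},\K_0)-d(y,\K_0)|\le|\log\eps|$, and finish by a case analysis on whether the relevant points lie in $\K_0$ or in a rank-$(n-1)$ cusp, invoking $\delta_\Gamma>(n-1)/2$ at the end. Your packaging of the cases via the pair $(r_\eps,r_1)$ is slightly more uniform than the paper's, but the argument is the same in substance.
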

\begin{proof}
	First, note that by assumption on the rank of the cusps, $$\delta_\Gamma > (n-1)/2.$$	

	By Corollary \ref{cor; extended shadow lem}, we have that \begin{equation}
	\ps_y(B_U(1)) \ge R\inv e^{(r - {\delta_\Gamma})d(y,\K_0)}\label{low bd bu1}
	\end{equation} where $r$ is the rank of the cusp containing $y$, and is zero if $y\in\K_0$. Similarly, if $r_\eps$ denotes the rank of the cusp containing $ya_{-\log \eps}$, then
	\begin{equation}\label{up bd bueps}
	\ps_y(B_U(\eps)) \le R\eps^{\delta_\Gamma}e^{(r_\eps-{\delta_\Gamma})d(ya_{-\log\eps},\K_0)}.
	\end{equation}
	We also have \begin{equation}
	d(y,\K_0)-|\log \eps|\le d(ya_{-\log \eps},\K_0) \le d(y,\K_0)+|\log\eps|. \label{dlogeps log kap}
	\end{equation}
	Let $0<\eps<1$ and assume first that $r_\eps = n-1$, so that $r_\eps-{\delta_\Gamma}\ge 0$. Then by (\ref{up bd bueps}) and (\ref{dlogeps log kap}), we have \begin{align*}
	\ps_y(B_U(\eps)) & \le R\eps^{\delta_\Gamma}e^{(n-1 - {\delta_\Gamma})d(y,\K_0)}\eps^{({\delta_\Gamma}-(n-1))} \\
	{}^{\text{by (\ref{low bd bu1})} \leadsto}&\le \ps_y(B_U(1))R^2 \eps^{2{\delta_\Gamma}-(n-1)}e^{(n-1-r)d(y,\K_0)}\\ 
	&\le \ps_y(B_U(1))R^2 \eps^{2{\delta_\Gamma}-(n-1)}\\
	\end{align*} where the last line follows because if $r =0$, then $d(y,\K_0)=0$, and otherwise, $r_\eps = r = n-1.$
	
	Now, suppose that $r_\eps = 0$. Then we have by (\ref{up bd bueps}),
	\begin{align*}
	\ps_y(B_U(\eps))&\le R\eps^{\delta_\Gamma}e^{-\delta_\Gamma d(ya_{-\log \eps},\K_0)}\\
	&\le R\eps^{\delta_\Gamma}\\
{}^{\text{by (\ref{low bd bu1})} \leadsto}&\le R^2\eps^{\delta_\Gamma}e^{-rd(y,\K_0)}\ps_y(B_U(1)) \\
	&\le R^2\eps^{\delta_\Gamma}\ps_y(B_U(1))\\
	&\le R^2 \eps^{2\delta_\Gamma-(n-1)}\ps_y(B_U(1)),
	\end{align*} where the last line follows because $\delta_\Gamma \ge 2\delta_\Gamma -(n-1)$ and $\eps<1$. This establishes the first case.
	
	Now, assume that $\eps\ge 1$, so $\log\eps\ge 0$. We again consider cases. First, suppose that $r_\eps = n-1$, Then by (\ref{up bd bueps}) and (\ref{dlogeps log kap}), we have \begin{align*}
	\ps_y(B_U(\eps))&\le R \eps^{\delta_\Gamma}e^{(n-1-\delta_\Gamma)d(y,\K_0)}\eps^{n-1-\delta_\Gamma} \\
	{}^{\text{by (\ref{low bd bu1})} \leadsto}&\le R^2 \eps^{n-1}e^{(n-1-r)d(y,\K_0)}\ps_y(B_U(1)) \\
	&\le R^2 \eps^{n-1}\ps_y(B_U(1))\\
	&\le R^2 \eps^{2\delta_\Gamma}\ps_y(B_U(1))
	\end{align*} where the second to last line follows because $e^{(n-1-r)d(y,\K_0)}=0$ when $r \in \{0,n-1\}$, and the final line because $\delta_\Gamma > (n-1)/2$.
	
	Now, suppose that $r_\eps = 0$. Then again by  (\ref{up bd bueps}) and (\ref{dlogeps log kap}), we have \begin{align*}
	\ps_y(B_U(\eps))&\le R\eps^{\delta_\Gamma}e^{-\delta_\Gamma d(y,\K_0)}\eps^{\delta_\Gamma} \\
	{}^{\text{by (\ref{low bd bu1})} \leadsto}&\le R^2\eps^{2\delta_\Gamma} e^{-r d(y,\K_0)} \ps_y(B_U(1))\\
	&\le R^2\eps^{2\delta_\Gamma}\ps_y(B_U(1)),
	\end{align*} which completes the second case.
\end{proof}

For $d \in \N$ and $c>0$, define \begin{align*}\F_{d,m} = &\{f:B_U(1)\to U : f=(f_1,\ldots,f_{n-1}) \text{ with every } f_i \text{ a polynomial }\\&\text{of degree at most } d, \text{ and all coefficients of each } f_i \in [m\inv, m]\}\end{align*} Note that it is a compact subset of $C(B_U(1))$. For $f \in \F_{d,m}$ and $r>0$, define $$N_r(f) := \{\t \in B_U(1) : |f(\t)|< r\}.$$ %For $\kappa>0$, define $$N_{r}^\kappa(f)=N_r(f) \cap B_U(\kappa).$$

\begin{lemma}
	Assume that all cusps have rank $n-1$ and that $\delta_\Gamma > n-\frac{5}{4}$. Let $d,m>0$. Then there exist constants $\ov{c}>0$ and $\alpha>0$ such that for every $y \in N_{1/2}(\supp\bms)$, every $0<\eps<1/4$, and every $f \in \F_{2,m}$, $$\ps_y(N_\eps(f)) < \ov{c}\eps^\alpha \ps_y(B_U(1)).$$
	\label{lem; geomfin var nbhd small}
\end{lemma}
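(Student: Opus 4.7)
The strategy is a covering argument: cover $N_\eps(f)\cap\supp\ps_y\cap B_U(1)$ by small balls of radius $T:=\eps$, bound the number of balls using Lebesgue measure via $(C_n,1/d)$-goodness of a coordinate polynomial (with $d=2$), and bound the $\ps_y$-measure of each ball using Corollary \ref{cor; cor to shad}.

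For $y\in N_{1/2}(\supp\bms)$, write $y=y^*u^*$ with $y^*\in\supp\bms$ and $u^*\in B_U(1/2)$; since $U$ fixes the $-$ endpoint, $y^-=y^{*-}\in\Lambda(\Gamma)$. Take a maximal $T/2$-separated set $\{u_j\}_{j=1}^N\subseteq N_\eps(f)\cap\supp\ps_y\cap B_U(1)$, so $\{B_U(u_j,T)\}$ covers this set and $\{B_U(u_j,T/2)\}$ are pairwise disjoint. For each $j$, $u_j\in\supp\ps_y$ gives $(yu_j)^+\in\Lambda$, and $(yu_j)^-=y^-\in\Lambda$, hence $yu_j\in\supp\bms\subseteq N_{1/2}(\supp\bms)$, so Corollary \ref{cor; cor to shad} applies at each $yu_j$. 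Since $f$ is Lipschitz on $B_U(2)$ with constant $L=L(n,m)$, $\bigcup_j B_U(u_j,T/2)\subseteq\{|f_1|<(1+L/2)\eps\}$; using $f_1(0)\ge m^{-1}$ (hence $\sup_{B_U(2)}|f_1|\ge m^{-1}$) and the $(C_n,1/2)$-goodness of the degree-$2$ polynomial $f_1$ in $n-1$ variables, we get $\lambda(\bigcup_j B_U(u_j,T/2))\le C_1\eps^{1/2}$, so $N\le C_2\eps^{3/2-n}$.

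For each ball, Corollary \ref{cor; cor to shad} (case 1, $T<1$) gives $\ps_y(B_U(u_j,T))=\ps_{yu_j}(B_U(T))\le R^2 T^{2\delta_\Gamma-n+1}\ps_{yu_j}(B_U(1))$. Since $u_j\in U$ translates along the horosphere based at $y^-$, $y$ and $yu_j$ sit at equal Busemann height, hence at comparable distance from $\K_0$; combined with the full-rank cusp assumption (so the cusp rank agrees at both points), Corollary \ref{cor; extended shadow lem} yields the uniform comparison $\ps_{yu_j}(B_U(1))\le R_0\ps_y(B_U(1))$. Summing,
\[
\ps_y(N_\eps(f)\cap B_U(1))\le N\cdot R^2 R_0 T^{2\delta_\Gamma-n+1}\ps_y(B_U(1))\le C_3\eps^{2(\delta_\Gamma-n+5/4)}\ps_y(B_U(1)),
\]
so $\alpha:=2(\delta_\Gamma-n+5/4)>0$ by hypothesis.

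The main obstacle is verifying the uniform $(C_n,1/2)$-goodness of polynomials of degree $\le 2$ in $n-1$ variables, with constant depending only on $n$. The generic Kleinbock-Margulis exponent $1/(2(n-1))$ is too small for $n\ge 3$ to yield $\alpha>0$ under the threshold $\delta_\Gamma>n-5/4$; the sharper $1/d=1/2$ exponent is what matches the hypothesis exactly, and it can be obtained by a slicing reduction, picking a direction along which the restriction of $f_1$ is a nontrivial degree-$2$ polynomial in one variable with sup comparable (up to an $n$-dependent constant) to $\sup_{B_U(2)}|f_1|$, then applying the classical one-variable $(C,1/d)$-goodness and integrating over transverse directions.
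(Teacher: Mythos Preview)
Your covering argument is essentially the paper's proof: take a maximal $\eps$-separated set in $N_\eps(f)$, bound the number of balls by $\lambda(N_{C\eps}(f))\lesssim\eps^{1/2}$ divided by $\eps^{n-1}$, and bound the PS-mass of each ball via Corollary~\ref{cor; cor to shad}, arriving at the same exponent $\alpha=2(\delta_\Gamma-n+5/4)$. You are also right that the sharp $(C,1/2)$-goodness exponent for degree-$2$ polynomials (rather than the generic $1/(2(n-1))$) is exactly what the threshold $\delta_\Gamma>n-5/4$ is tuned to; the paper's own proof of its Claim is terse on precisely this point.

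One step deserves tightening. Your bound $\ps_{yu_j}(B_U(1))\le R_0\,\ps_y(B_U(1))$ is argued via ``equal Busemann height based at $y^-$, hence comparable distance to $\K_0$''. The horosphere through $y$ and $yu_j$ is indeed based at $y^-$, but $y^-$ need not be the parabolic fixed point $\xi$ of the cusp containing $y$, so equal height relative to $y^-$ does not directly control depth relative to $\xi$. The conclusion is still correct (left-invariance gives $d_{\H^n}(y\!\cdot\! o,\,yu_j\!\cdot\! o)=d_{\H^n}(o,\,u_j\!\cdot\! o)$, which is bounded for $u_j\in B_U(1)$, hence $|d(y,\K_0)-d(yu_j,\K_0)|$ is bounded), but the paper avoids this detour entirely: since $u_j\in B_U(1)$ one has $u_jB_U(1)\subseteq B_U(2)$, so $\ps_{yu_j}(B_U(1))\le\ps_y(B_U(2))\le R^2 d_2\,\ps_y(B_U(1))$ by Corollary~\ref{cor; cor to shad}(2). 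Swapping in this containment makes your argument complete and matches the paper line for line.
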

\begin{proof}
	By Corollary \ref{cor; cor to shad}, $\alpha_1:=  2\delta_\Gamma -(n-1)$ is such that for all $y \in N_{1/2}(\supp\bms)$, \begin{equation}
\ps_y(B_U(\eps)) < R^2\eps^{\alpha_1} \ps_y(B_U(1)). \end{equation} Let $\{z_1, \ldots, z_k\}$ be a maximal $\eps$-separated set in $N_\eps(f)$. 
	
	\textbf{Claim:} There exists some constant $d'>0$ such that $k \le d' \eps^{\frac{3-2n}{2}}.$
	\begin{proof}[Proof of claim]
		By the mean value theorem, there exists $c'$ such that for all $\s,\t \in B_U(1)$, $$|f(\t + \eps \s)| \le |f(\t)| + c' \eps.$$ Thus, since $z_i \in N_\eps(f)$, for all $ 1 \le i \le k$ and for all $\t \in z_i B_U(\eps)$, we have $$|f(\t)| < (1+c')\eps.$$ Hence, we have $$\bigsqcup\limits_{i=1}^k z_i B_U(\eps/4) \subseteq N_{(1+c')\eps}(f).$$ 	Thus, there exists some constant $d>0$ such that $dk(\eps/4)^{n-1} \le \lambda(N_{(1+c')\eps}(f))$, where $\lambda$ denotes the Lebesgue measure.
		
		%By \cite{Brudnyi}, there exists a constant $c''$ such that $$\lambda(N_{(1+c')\eps}(f)) < c'' \eps^{1/2},$$ so we have that there is some constant $d'>0$ such that $$k \le d' \eps^{\frac{3-2n}{2}},$$ which establishes the claim.
	\end{proof}

Because $\{z_1,\ldots, z_m\}$ is a maximal $\eps$-separated subset of $N_\eps(f)$, we have that $$N_\eps(f) \subseteq \bigcup\limits_{i=1}^k z_i B_U(2\eps).$$

By Corollary \ref{cor; cor to shad}, there exists $d_1>0$ such that for all $w \in N_{1/2}(\supp\bms)$, $$\ps_w(B_U(2\eps)) \le R^2d_1 \eps^{\alpha_1}\ps_w(B_U(1)).$$ Then we also have that $$\ps_{z_i}(B_U(2\eps)) \le R^2d_1 \eps^{\alpha_1}\ps_w(B_U(1)).$$ This is because if $z_i \not\in N_{1/2}(\supp\bms)$, then $z_iB_U(2\eps)\cap \supp\bms = \emptyset$, and so the left hand side is zero. 

Also by Corollary \ref{cor; cor to shad}, there exists $d_2>0$ such that for all $w \in N_{1/2}(\supp\bms),$  $$\ps_w(B_U(2)) \le R^2d_2 \ps_{w}(B_U(1)).$$ From this, we obtain

%\textbf{Claim 2:} Let $d_1> 0, d_2\ge 1$ be such that for all $w \in N_{1/2}(\supp\bms)$, $$\ps_w(B_U(4\eps)) < R^2d_1 \eps^{\alpha_1}\ps_w(B_U(1))$$ and $$\ps_w(B_U(2)) < R^2d_2 \ps_{w}(B_U(1)).$$ (Such constants exists by Corollary \ref{cor; cor to shad}.) Then for all $1 \le i \le k$, $$\ps_{z_i}(B_U(2\eps)) \le R^4d_1d_2 \eps^{\alpha_1} \ps_{z_i}(B_U(1)).$$
%\begin{proof}[Proof of claim 2]
%	\tcr{Check this again: seems changing to $N_{1/2}(\supp\bms)$ made this claim trivial.}
	
%	If $z_i \in N_{1/2}(\supp\bms)$, this follows immediately by choice of $d_1$, since $d_2,R \ge 1$.

%	Now, suppose that $z_i \not\in N_{1/2}(\supp\bms)$. First, if $z_i B_U(2\eps)\cap \supp\bms = \emptyset$, then $\ps_{z_i}(B_U(2\eps)) = 0$ and there is nothing to prove, so assume not.
	
%	Let $y_i \in z_iB_U(2\eps)\cap \supp\bms$. Then we have $$z_iB_U(2\eps)\subseteq y_iB_U(4\eps)$$ and $$y_iB_U(1) \subseteq z_i B_U(2).$$ Note that $y_iU=z_iU$, so their PS measures are the same. We have \begin{align*}
%	\ps_{z_iU}(z_iB_U(2\eps)) &\le \ps_{z_i U}(y_iB_U(4\eps)) \\
%	&\le R^2d_1 \eps^{\alpha_1}\ps_{z_iU}(y_iB_U(1))\\
%	&\le R^2d_1 \eps^{\alpha_1}\ps_{z_iU}(z_iB_U(2))\\
%	&\le R^4d_1d_2 \eps^{\alpha_1}\ps_{z_iU}(z_iB_U(1)),
%	\end{align*} 	which completes the proof of the claim.
%\end{proof}
\begin{align*}
\ps_y(N_\eps(f)) &\le \sum\limits_{i=1}^k \ps_{z_i}(B_U(2\eps)) \\
&\le \sum\limits_{i=1}^k R^2d_1\eps^{\alpha_1} \ps_{z_i}(B_U(1)) \\
&\le \sum\limits_{i=1}^k R^2d_1 \eps^{\alpha_1}\ps_y(B_U(2)) &\text{ because } z_iB_U(1) \subseteq yB_U(2)\\
&\le R^4 kd_1 d_2 \eps^{\alpha_1}\ps_y(B_U(1)) &\text{ by definition of } d_2\\
&\le \ov{c} \eps^{\alpha_1}\eps^{\frac{3-2n}{2}}\ps_y(B_U(1)),
\end{align*} where $\ov{c} =R^4 d'd_1d_2$. Let $\alpha = \frac{3-2n+2\alpha_1}{2}$. Since $\alpha_1 = 2\delta_\Gamma -(n-1)$, the assumption $\delta_\Gamma > n-\frac{5}{4}$ ensures $\alpha>0$.
\end{proof}

\begin{lemma}
	Let $K \subseteq N_{1/2}(\supp\bms)$ be compact and let $d,m>0$. Then for every $\eta>0$, there exists $\eps>0$ such that for all $y \in K$ and for all $f \in \F_{d,m}$, $$\ps_y(N_\eps(f)) < \eta \ps_y(B_U(1)).$$
	\label{lem;var nbhd small}
\end{lemma}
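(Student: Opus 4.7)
The plan is to argue by contradiction, using compactness of $K$ and of $\F_{d,m}$ together with the continuity of $g\mapsto\ps_g$ (Lemma~\ref{lem;new cty ps measure}) and the fact that proper subvarieties are PS-null (Lemma~\ref{lem; vars null sets, cty of g to psg}).

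Suppose the conclusion fails: there exist $\eta>0$ and sequences $\eps_n\downarrow 0$, $y_n\in K$, $f_n\in\F_{d,m}$ with
\[
\ps_{y_n}(N_{\eps_n}(f_n))\ \ge\ \eta\,\ps_{y_n}(B_U(1)) \qquad \text{for every } n. \qquad (*)
\]
The elements of $\F_{d,m}$ are polynomials of bounded degree whose coefficients lie in the closed interval $[m\inv,m]$, hence form an equicontinuous, uniformly bounded, and closed family in $C(\overline{B_U(1)})$; in particular $\F_{d,m}$ is compact. Passing to subsequences, I obtain $y_n\to y_*\in K$ and $f_n\to f_*\in \F_{d,m}$ uniformly on $\overline{B_U(1)}$. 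Since every coefficient of every component of $f_*$ is at least $m\inv>0$, each component of $f_*$ is a nonzero polynomial, and the joint zero set $V_*:=\{\t: f_*(\t)=0\}$ lies in a proper subvariety of $U$. By Lemma~\ref{lem; vars null sets, cty of g to psg}, $\ps_{y_*}(V_*)=0$.

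Next I control the numerator. Setting $\delta_n:=\sup_{\overline{B_U(1)}}|f_n-f_*|\to 0$, every $\t\in N_{\eps_n}(f_n)$ satisfies $|f_*(\t)|<\eps_n+\delta_n$. For $\rho>0$, the sub-level set $E_\rho:=\{\t\in\overline{B_U(1)}:|f_*(\t)|\le\rho\}$ has boundary contained in $\{|f_*|^2=\rho^2\}\cup\partial B_U(1)$, both proper subvarieties (the exceptional case where $|f_*|$ is constant forces $V_*=\emptyset$ and $N_{\eps_n}(f_n)=\emptyset$ for large $n$, immediately contradicting $(*)$). Hence $\ps_{y_*}(\partial E_\rho)=0$, and because $\ps_{y_n}\to\ps_{y_*}$ weakly against $C_c(U)$, the Portmanteau property for bounded Borel sets with $\ps_{y_*}$-null boundary yields $\ps_{y_n}(E_\rho)\to\ps_{y_*}(E_\rho)$. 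For $n$ large enough that $\eps_n+\delta_n\le\rho$, we have $N_{\eps_n}(f_n)\subseteq E_\rho$, so $\limsup_n\ps_{y_n}(N_{\eps_n}(f_n))\le\ps_{y_*}(E_\rho)$. Letting $\rho\downarrow 0$ and applying continuity of measure from above, $\ps_{y_*}(E_\rho)\to\ps_{y_*}(V_*\cap\overline{B_U(1)})=0$, so $\ps_{y_n}(N_{\eps_n}(f_n))\to 0$.

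The final step bounds the denominator from below. Because $y_*\in K\subseteq N_{1/2}(\supp\bms)=(\supp\bms)B_U(1/2)$, I write $y_*=y_0u_{s_0}$ with $y_0\in\supp\bms$ and $|s_0|\le 1/2$; then $(y_*u_{-s_0})^+=y_0^+\in\Lambda(\Gamma)$ places $-s_0$ in the support of $\ps_{y_*}$, giving $\ps_{y_*}(B_U(1))>0$. Since $\partial B_U(1)$ is contained in a finite union of coordinate hyperplanes, $\ps_{y_*}(\partial B_U(1))=0$, and the Portmanteau property yields $\ps_{y_n}(B_U(1))\to\ps_{y_*}(B_U(1))>0$. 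Combined with the previous step, $\ps_{y_n}(N_{\eps_n}(f_n))/\ps_{y_n}(B_U(1))\to 0$, contradicting $(*)$. The subtlest point — and the main technical obstacle — is this transfer from weak-$*$ convergence against $C_c$ functions to convergence of measure of specific Borel sets; it succeeds precisely because all the relevant boundaries lie in proper subvarieties and are therefore PS-null.
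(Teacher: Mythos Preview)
Your proof is correct and follows essentially the same contradiction argument as the paper: pass to subsequential limits $y_*\in K$ and $f_*\in\F_{d,m}$ using compactness, trap $N_{\eps_n}(f_n)$ inside shrinking neighborhoods of the zero variety $V_*$ of $f_*$, and invoke Lemma~\ref{lem; vars null sets, cty of g to psg} together with $\ps_{y_*}(B_U(1))>0$ to obtain a contradiction. The only notable difference is that you are more explicit at the limiting step: the paper simply asserts that continuity of $g\mapsto\ps_g$ gives $\ps_{y_\infty}(V)\ge\eta\,\ps_{y_\infty}(B_U(1))$, whereas you justify this carefully via a Portmanteau-type argument, checking that the boundaries of the relevant sets (sublevel sets of $|f_*|$ and $\partial B_U(1)$) lie in proper subvarieties and are therefore $\ps_{y_*}$-null. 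This extra care is warranted, since Lemma~\ref{lem;new cty ps measure} only gives convergence against $C_c(U)$, and your treatment fills in exactly the step the paper leaves implicit.
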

\begin{proof}
	Suppose not. Then there exists $\eta>0$ and sequences $f_i \in \F_{d,m}$, $y_i \in K$, and $\eps_i \to 0$ such that $$\ps_{y_i}(N_{\eps_i}(f_i)) \ge \eta \ps_{y_i}(B_U(1)).$$ By compactness of $K$ and of $\F_{d,m}$, we may assume that there exists $f \in \F_{d,m}$ and $y_\infty \in K$ such that $f\to f_i$ uniformly and $y_i \to y_\infty$.
	
	Let $V = \{t \in B_U(1): f(\t) = 0\}$. Since $f_i \to f$ uniformly, for each $i$ there exists $\eps_i'>0$ such that $$N_{\eps_i}(f_i) \subseteq N_{\eps_i'}(f)$$ and $\eps_i'\to 0$. Thus, we have that for all $i$, $$\ps_{y_i}(N_{\eps_i'}(f))\ge \eta \ps_{y_i}(B_U(1)).$$ By the continuity of $g \mapsto \ps_g$ in Lemma \ref{lem;new cty ps measure}, it follows that $$\ps_{y_\infty}(V) \ge \eta \ps_{y_\infty}(B_U(1)).$$ However, $\ps_{y_\infty}(B_U(1))>0$ because $y_\infty \in N_{1/2}(\supp\bms)$, and $\ps_{y_\infty}(V)=0$ by Lemma \ref{lem; vars null sets, cty of g to psg}, so this is a contradiction.
\end{proof}

%We will use Lemma \ref{lem;var nbhd small} in the context of $\Gamma$ convex cocompact and $K=\supp\bms$, as follows.

We will use Lemma \ref{lem;var nbhd small} in the context of $\Gamma$ convex cocompact and $K=N_{1/2}(\supp\bms)$, as follows.

%\tcr{NEW:}
%\begin{lemma} Assume that $\Gamma$ is either \begin{itemize}
%		\item convex cocompact, or
%		\item geometrically finite with all cusps of rank $n-1$ and $\delta_\Gamma > n-\frac{5}{4}$.
%	\end{itemize} Let $K \subseteq N_1(\supp\bms)\cap\{x:x^- \in \Lambda_{\operatorname{r}}(\Gamma)\}$ be compact, and let $\kappa = \kappa(K)$ as in Lemma \ref{lem;PS inf kappa for compact set}. Let $g \in \F_{d,m}$ for some $d,m$. For every $f \in C_c(\Gamma \backslash G)$ and for every $\eta>0$, there exists $\eps>0$ and $T_0 = T_0(f,\eps)>0$ such that for all $T\ge T_0$ and all $y \in K$, \begin{align*}
%	e^{-\delta_\Gamma s} \int_{a_{-s}N_{\eps'}^\kappa({g})a_s} f(yu_\t)d\t &= e^{(n-1-\delta_\Gamma)s}\int_{N_{\eps'}^\kappa({g})} f(ya_{-s}u_\t a_s)d\t \\
%	&\ll_f \eta\ps_{ya_{-s}}(B_U(\kappa)),
%	\end{align*}%\[e^{-\delta_\Gamma s} \int_{a_{-s}N_{\eps'}({g})a_s} f(yu_\t)d\t = e^{(n-1-\delta_\Gamma)s}\int_{N_{\eps'}({g})} f(ya_{-s}u_\t a_s)d\t \ll_f \eta\ps_{ya_{-s}}(B_U(1)),\] 
%	where $s = \log (T/\kappa)$. ($\ll_f \eta$ means $\le k \eta$ for some constant $k$ that depends only on $f$.)
%	\label{lem;int over V small, abs cty}
%\end{lemma}

\begin{lemma} Assume that $\Gamma$ is either \begin{itemize}
		\item convex cocompact, or
		\item geometrically finite with all cusps of rank $n-1$ and $\delta_\Gamma > n-\frac{5}{4}$.
	\end{itemize} Let $g \in \F_{d,m}$ for some $d,m$. For every $f \in C_c(\Gamma \backslash G)$ and for every $\eta>0$, there exists $\eps>0$ and $T_0 = T_0(f,\eps)>0$ such that for all $T\ge T_0$, if $ya_{-s} \in K:=N_{1/2}(\supp\bms)\cap\{x:x^-\in\Lambda_{\operatorname{r}}(\Gamma)\}$, where $s= \log T$, we have \begin{align*}
	e^{-\delta_\Gamma s} \int_{a_{-s}N_{\eps'}({g})a_s} f(yu_\t)d\t &= e^{(n-1-\delta_\Gamma)s}\int_{N_{\eps'}({g})} f(ya_{-s}u_\t a_s)d\t \\
	&\ll_f \eta\ps_{ya_{-s}}(B_U(1)).
\end{align*}%\[e^{-\delta_\Gamma s} \int_{a_{-s}N_{\eps'}({g})a_s} f(yu_\t)d\t = e^{(n-1-\delta_\Gamma)s}\int_{N_{\eps'}({g})} f(ya_{-s}u_\t a_s)d\t \ll_f \eta\ps_{ya_{-s}}(B_U(1)),\] 
%where $s = \log T$. 
($\ll_f \eta$ means $\le k \eta$ for some constant $k$ that depends only on $f$.)
	\label{lem;int over V small, abs cty}
\end{lemma}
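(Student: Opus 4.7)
The equality asserted in the lemma is a direct change of variables. From Section~\ref{section; prelims}, $a_{-s}u_\t a_s = u_{\t e^s}$, so the set $a_{-s}N_{\eps'}(g)a_s$ equals $e^s N_{\eps'}(g)$ as a subset of $U\cong\R^{n-1}$; substituting $\t \mapsto \t/e^s$ produces a Jacobian $e^{-(n-1)s}$, which combines with $e^{-\delta_\Gamma s}$ to yield $e^{(n-1-\delta_\Gamma)s}$.

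For the inequality, set $z := ya_{-s} \in K$ and note $zu_\t a_s = za_s u_{\t e^s} = yu_{\t e^s}$. Substituting $\t' = \t e^s$ and using the scaling identity $\ps_y(B_U(e^s)) = e^{\delta_\Gamma s}\ps_z(B_U(1))$, the claim reduces to
\[
\int_{e^s N_\eps(g)} f(yu_{\t'})\,d\t' \ll_f \eta\, \ps_y(B_U(e^s)).
\]
A further application of the scaling identity gives $\ps_y(e^s N_\eps(g)) = e^{\delta_\Gamma s}\ps_z(N_\eps(g))$, so Lemma~\ref{lem;var nbhd small} applied at $z\in K$ (with the PS non-concentration threshold for $\F_{d,m}$) yields $\ps_y(e^s N_\eps(g)) < \eta\,\ps_y(B_U(e^s))$ provided $\eps$ is small enough. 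The task is therefore to upgrade this PS-mass estimate to the Lebesgue-integral estimate for $f$.

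The plan is to compare the Lebesgue integral against PS locally, patch-by-patch. Fix a compact set $F\supseteq \supp f$ and let $\kappa=\kappa(F)$ be as in Lemma~\ref{lem;PS inf kappa for compact set}. Choose a maximal $\kappa$-separated set $\{\q_j\}\subseteq e^s N_\eps(g)$ with $yu_{\q_j}\in N_{1/2}(\supp\bms)$; balls $u_{\q_j}B_U(\kappa)$ for which $yu_{\q_j}$ lies outside a bounded $U$-neighborhood of $\supp f$ contribute nothing to the integral, so only the ``relevant'' centers count. On each relevant ball, the trivial bound $\int_{u_{\q_j}B_U(\kappa)}|f(yu_{\t'})|\,d\t' \le \|f\|_\infty \lambda(B_U(\kappa))$ holds, and Corollary~\ref{cor; extended shadow lem} (combined with the assumption on cusps) gives a uniform lower bound $\ps_y(u_{\q_j}B_U(\kappa))\ge c(K,f)\,\kappa^{\delta_\Gamma}$. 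Thus each relevant-ball contribution is $\ll_f \ps_y(u_{\q_j}B_U(\kappa))$, and summing over the (boundedly overlapping) cover gives $\int_{e^s N_\eps(g)}f(yu_{\t'})\,d\t' \ll_f \ps_y(e^s N_\eps(g)) \ll_f \eta\,\ps_y(B_U(e^s))$, as desired.

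The main obstacle is maintaining uniformity in the comparison $\lambda(B_U(\kappa))\asymp_{F,K}\ps_y(u_{\q_j}B_U(\kappa))$: a priori $yu_{\q_j}$ can wander arbitrarily far in the cusps as $s\to\infty$, and without control on the cusp rank the PS mass of small balls degenerates. This is precisely why the hypothesis that all cusps have rank $n-1$ (or convex cocompactness) enters through Corollary~\ref{cor; extended shadow lem}, and why $\delta_\Gamma>n-\tfrac{5}{4}$ is required through Lemma~\ref{lem; geomfin var nbhd small}. Handling the geometrically finite case further requires working with the thickening $N_{1/2}(\supp\bms)$ rather than $\supp\bms$ itself, so that the centers $\q_j$ need only be chosen within a neighborhood of $\supp\bms$, which is where Corollary~\ref{cor; extended shadow lem} (as opposed to Lemma~\ref{lem; shadow lem}) is essential.
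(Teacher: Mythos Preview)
Your covering strategy is sound and genuinely different from the paper's. The paper works at the rescaled basepoint $z=ya_{-s}$: after reducing $\supp f$ to an admissible box, it covers $zN_\eps(g)\subseteq zB_U(1)$ by balls of radius $\eps/4$, and on each ball invokes Lemma~\ref{fact; claim A} (Claim~A from \cite{joinings}) to convert the normalized Lebesgue integral into PS mass; summing with Besicovitch multiplicity gives $\ll_f \ps_z(N_{2\eps}(g))$, and Lemmas~\ref{lem; geomfin var nbhd small}--\ref{lem;var nbhd small} finish. You instead cover at the $y$-scale by balls of a fixed radius $\kappa$ and compare Lebesgue to PS directly on each ball, which is more elementary and bypasses Claim~A and the admissible-box reduction altogether.

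However, your justification of the uniform PS lower bound is misdirected, and this muddles the argument. The relevant centers $\q_j$ (those whose ball meets $\supp f$) satisfy $yu_{\q_j}\in(\supp f)B_U(\kappa)$, a \emph{fixed compact set} depending only on $f$; so Lemma~\ref{lem;PS inf kappa for compact set} by itself already gives $\ps_{yu_{\q_j}}(B_U(\kappa))\ge c(f)>0$. There is no need for Corollary~\ref{cor; extended shadow lem} (which in any case requires $T>1$), and no need to pre-restrict centers to $N_{1/2}(\supp\bms)$ --- as written, that restriction risks leaving part of $\{\t: yu_\t\in\supp f\}$ uncovered. Your final paragraph's worry that $yu_{\q_j}$ ``can wander arbitrarily far in the cusps as $s\to\infty$'' is therefore unfounded: relevant centers never leave a neighborhood of $\supp f$. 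The hypotheses on cusp rank and on $\delta_\Gamma$ enter only through the non-concentration bound $\ps_z(N_{2\eps}(g))<\eta\,\ps_z(B_U(1))$ (Lemma~\ref{lem; geomfin var nbhd small}); they play no role in the ball-by-ball comparison. With these corrections --- and noting that summing the PS masses bounds $\ps_y$ of a $\kappa$-thickening of $e^sN_\eps(g)$, which rescales into $N_{2\eps}(g)$ once $T$ is large --- your argument goes through.
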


%\begin{lemma}
%	Let $V \subseteq U$ be a proper subvariety. For every $f \in C_c(X_1)$ and for every $\eta>0$, there exists $\eps'>0$ and $T_0 = T_0(f,\eps')>0$ such that for all $T \ge T_0$ and all $y \in \supp\bms$,\[e^{-\delta_\Gamma s} \int_{a_{-s}N_{\eps'}({V})a_s} f(yu_\t)d\t = e^{(n-1-\delta_\Gamma)s}\int_{N_{\eps'}({V})} f(ya_{-s}u_\t a_s)d\t \ll_f \eta,\] where $s = \log T$. ($\ll_f \eta$ means $\le k \eta$ for some constant $k$ that depends only on $f$.)
%	\label{lem;int over V small, abs cty}
%\end{lemma}

To prove Lemma \ref{lem;int over V small, abs cty}, we need a fact from \cite{joinings}, which requires the following definition. Let $$P = AMU^-$$ and let $P_r$ denote the ball of radius $r$ in $P$. For $\eps_0, \eps_1 > 0$, we say that $zP_{\eps_1} B_U(\eps_0)$ is an \emph{admissible box} if it is the injective image of $P_{\eps_1}B_U(\eps_0)$ in $\Gamma \backslash G$ under the map $g\mapsto zg$, and $\ps_{zp}(zpB_U(\eps_0)) \ne 0$ for all $p \in P_{\eps_1}$. In the statement of the next lemma, we will assume our functions are supported within an admissible box. By a partition of unity argument, there is no loss of generality by making this assumption.

\begin{lemma}\cite[Claim A in Theorem 4.6]{joinings} \label{fact; claim A}
	Let $\xi \in C_c(X_1)$ be supported within the admissible box $zP_{\eps_1}B_U(\eps_0)$ with $0<\eps_1 \le \eps_0$. Suppose that $x^- \in \Lambda_{\operatorname{r}}(\Gamma)$, and let $\Omega$ be a compact set such that there exists $t_n \to \infty$ with $xa_{-t_n} \in \Omega$ for all $n$. Let $s_0$ be such that $$x_0 := xa_{-s_0} \in \Omega.$$ For $\rho>0$ and every $y \in x_0U$, suppose that $f_y \in C(yB_U(\rho))$ is such that \[0\le f_y \le 1 \text{ and } f=1 \text{ on } yB_U(\rho/8).\] Then there exists $c>0$ depending only on $\supp\xi$ (in particular, $\operatorname{diam}(\supp\xi)$ is a possible choice) such that for all $y \in x_0 U$, \[e^{(n-1-\delta_\Gamma)s_0}\int_{U}\xi (yu_\t a_{s_0})f_y(yu_\t)d\t \ll_\xi \ps_y(f_{y,ce^{-s_0}\eps_1,+})\] %$c>1$ \tcr{do we mean $c>0$?} depending only on the injectivity radii of $\supp \xi$ and $\supp f_y$ (hence, one uniform $c$ in the convex cocompact case) such that for all $y \in x_0 U$, \[e^{(n-1-\delta_\Gamma)s_0}\int_{U}\xi (yu_\t a_{s_0})f_y(yu_\t)d\t \ll_\phi \ps_y(f_{y,ce^{-s_0}\eps_1,+})\]
\end{lemma}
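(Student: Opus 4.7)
The goal is to bound $I := e^{(n-1-\delta_\Gamma)s}\int_{N_{\eps'}(g)} f(x_0 u_\t a_s)\,d\t$, where $x_0 := ya_{-s} \in K$. The plan has two steps: convert the Lebesgue integral on the variational neighborhood into a PS-mass of a slight thickening of $N_{\eps'}(g)$, then invoke non-concentration of PS measure near varieties. First, by a standard partition of unity argument on $\supp f$, I would reduce to the case that $f$ is supported in a single admissible box $zP_{\eps_1}B_U(\eps_0)$ with $\eps_1 \le \eps_0$, the constants absorbed into the ``$\ll_f$''.

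For the main estimate, I would apply a localized variant of Lemma \ref{fact; claim A}. The proof of Claim A proceeds by covering $U$ by boxes of radius $\sim ce^{-s}\eps_1$, the $U$-scale at which $\supp f$ is ``seen'' after translation by $a_s$: each such box contributes at most $\|f\|_\infty (ce^{-s}\eps_1)^{n-1}$ and only if its center $\t_0$ satisfies $x_0 u_{\t_0} a_s \in \supp f$; Sullivan's shadow lemma then converts the count of contributing boxes into a PS-mass. If one runs this same argument only over the boxes whose centers lie in the thickening $N_{\eps' + ce^{-s}\eps_1}(g)$, then the Lebesgue integral over $N_{\eps'}(g)$ is recovered, and the shadow-counting output becomes PS mass of the thickening. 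This yields
\[
I \,\ll_f\, \ps_{x_0}\!\bigl(N_{\eps' + ce^{-s}\eps_1}(g)\bigr).
\]

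Finally, I would apply the non-concentration of PS near varieties proved in Section \ref{section; varieties}. In the convex cocompact case, $\supp\bms$ is compact, so $K = N_{1/2}(\supp\bms) \cap \{x : x^- \in \Lambda_{\operatorname{r}}(\Gamma)\}$ sits in a compact subset, and Lemma \ref{lem;var nbhd small} produces, for any $\eta > 0$, some $\eps_\eta > 0$ with $\ps_{x_0}(N_{\eps_\eta}(g)) < \eta\,\ps_{x_0}(B_U(1))$ uniformly over $x_0 \in K$ and $g \in \F_{d,m}$. In the geometrically finite case (cusps of rank $n-1$ and $\delta_\Gamma > n - \tfrac{5}{4}$), Lemma \ref{lem; geomfin var nbhd small} gives the quantitative estimate $\ps_{x_0}(N_\eps(g)) \ll \eps^\alpha \ps_{x_0}(B_U(1))$ uniformly over $x_0 \in N_{1/2}(\supp\bms)$. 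I would then select $\eps' > 0$ small and $T_0$ large so that $\eps' + ce^{-s}\eps_1 \le \eps_\eta$ whenever $s = \log T \ge \log T_0$, completing the argument.

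The main obstacle is the second step. Claim A as stated outputs $\ps_y$ of a thickened bump function, not of $N_{\eps'}(g)$; so one must revisit its proof and verify that the restriction of the Lebesgue integral to $N_{\eps'}(g)$ corresponds exactly to restricting the $e^{-s}$-scale counting to boxes centered in $N_{\eps' + ce^{-s}\eps_1}(g)$. This is essentially a bookkeeping task: the scale of the box cover ($ce^{-s}\eps_1$) is precisely the $e^{-s}$-thickening factor, so containment of the box in $N_{\eps'}(g)$ and containment of its center in $N_{\eps' + ce^{-s}\eps_1}(g)$ differ only by the $O(e^{-s})$ thickening appearing on the right-hand side. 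Once this localized version of Claim A is established, the conclusion is immediate from Lemma \ref{lem;var nbhd small} (or Lemma \ref{lem; geomfin var nbhd small}).
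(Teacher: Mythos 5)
Your proposal does not prove the statement in question. The statement is Claim A itself: for $\xi$ supported in an admissible box $zP_{\eps_1}B_U(\eps_0)$, the weighted Lebesgue integral $e^{(n-1-\delta_\Gamma)s_0}\int_U \xi(yu_\t a_{s_0})f_y(yu_\t)\,d\t$ is bounded by $\ps_y(f_{y,ce^{-s_0}\eps_1,+})$. What you set out to bound instead is $e^{(n-1-\delta_\Gamma)s}\int_{N_{\eps'}(g)} f(x_0u_\t a_s)\,d\t$, which is the statement of Lemma \ref{lem;int over V small, abs cty} --- the downstream result that the paper deduces \emph{from} Claim A (by covering $N_\eps(g)$ with balls, applying Claim A to a subordinate partition of unity, using Besicovitch, and then invoking Lemma \ref{lem; geomfin var nbhd small} or \ref{lem;var nbhd small}). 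In your second step you explicitly ``apply a localized variant of Lemma \ref{fact; claim A}'', i.e.\ you assume the very statement you were asked to prove; relative to the task this is circular, and the remainder of your argument essentially reproduces the paper's proof of Lemma \ref{lem;int over V small, abs cty} rather than supplying a proof of Claim A.

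The only material you offer toward Claim A is the one-sentence sketch (cover $U$ by boxes of radius $\sim ce^{-s}\eps_1$, count boxes whose centers land in $\supp f$ after flowing by $a_s$, and let ``Sullivan's shadow lemma convert the count into PS-mass''), and this does not close the gap. The entire content of Claim A is the conversion step: one must show that the number of $U$-plaques along $yU$ meeting $(\supp\xi)a_{-s_0}$, each contributing Lebesgue measure $\ll (e^{-s_0}\eps_0)^{n-1}$, is controlled --- after multiplying by $e^{(n-1-\delta_\Gamma)s_0}$ --- by $\ps_y(f_{y,ce^{-s_0}\eps_1,+})$. This uses the admissible-box structure in an essential way: the transversal direction $P_{\eps_1}$ is contracted to scale $e^{-s_0}\eps_1$ under conjugation by $a_{-s_0}$ (which is where the thickening radius $ce^{-s_0}\eps_1$ in the conclusion comes from --- it is a transversal, not a $U$-direction, scale, so your bookkeeping conflates the two), and the hypothesis $\ps_{zp}(zpB_U(\eps_0))\ne 0$ for all $p\in P_{\eps_1}$, together with continuity of $g\mapsto\ps_g$ on the compact box, is what makes the PS masses of nearby plaques uniformly comparable so that the plaque count can be absorbed into $\ps_y$ of the thickened function. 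Sullivan's shadow lemma (Lemma \ref{lem; shadow lem}) by itself does not provide this comparison. Note also that the paper does not reprove this lemma; it is quoted from \cite[Claim A in Theorem 4.6]{joinings}, so a complete answer would either reproduce that argument or give an independent one --- your proposal does neither.
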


\begin{proof}[Proof of Lemma \ref{lem;int over V small, abs cty}]
	
	Let $f \in C_c(X_1)$. By a partition of unity argument, we may assume that $\supp(f)$ is contained in some admissible box $zP_{\eps_1}U_{\eps_0}$.
	
	Fix $\eta>0$. By Lemma \ref{lem; geomfin var nbhd small} (in the geometrically finite case) or \ref{lem;var nbhd small} (in the convex cocompact case), there exists $\eps>0$ such that for all $w \in N_{1/2}(\supp\bms)$, we have \begin{equation}\label{eqn; sec 4 def eps}\ps_w(N_{2\eps}({g}))<\eta\ps_w(B_U(1)).\end{equation} Let $c>0$ be as from Lemma \ref{fact; claim A} above. Let $T_0 = T_0(f,\eps)>0$ be such that $$ce^{-s_0}\eps_1 < \eps/20,$$ where $s_0 = \log T_0$. 
	
	%Fix $\eta>0$. By Lemma \ref{lem;var nbhd small}, there exists $\eps'>0$ such that for all $w \in \supp\bms$, we have $$\ps_w(N_{2\eps'}({V}))<\eta.$$ Let $c>1$ be as from Lemma \ref{fact; claim A} above. Let $T_0 = T_0(f,\eps')>0$ be such that $$ce^{-s_0}\eps_1 < \eps'/20,$$ where $s_0 = \log T_0$. 
	
	Let $y\in K$, let $T \ge T_0$, and define $s = \log T$. Let $I_T$ be a maximal set of points in $ya_{-s}N_{\eps}({g})$ such that the balls $\{zB_U(\eps/16):z\in I_T\}$ are disjoint. Thus, $$\{zB_U(\eps/4):z \in I_T\}$$ covers $ya_{-s}N_{\eps}({g})$. Let $\{f_z : z \in I_T\}$ be a partition of unity subordinate to this cover. Then we have: \begin{align*}
	e^{(n-1-\delta_\Gamma)s} \int_{N_{\eps}({g})} f(ya_{-s} u_\t a_{s})d\t &\le e^{(n-1-\delta_\Gamma)s} \sum\limits_{z \in I_T} \int_{B_U(\eps/4)} f(zu_\t a_{s})f_z(zu_\t)d\t \\
{}^{\text{by Lemma \ref{fact; claim A}} \leadsto}	& \ll_f\sum\limits_{z\in I_T} \ps_z(f_{z,\eps/20,+}) \\
	&\ll_f \kappa \ps_{ya_{-s}}(N_{2\eps}({g}))\\
{}^{\text{by }(\ref{eqn; sec 4 def eps}) \leadsto}	&\ll_f \kappa \eta \ps_{ya_{-s}}(B_U(1)) 
	\end{align*} where $\kappa$ is the multiplicity of the cover given by the Besicovitch covering theorem; $\kappa$ depends only on the dimension $n$.
\end{proof}

\section{Joinings}\label{section; joining classification}

In this section, we prove Theorem \ref{thm; final classification}. In particular, throughout this section, we assume either that the $\Gamma_i$'s are convex cocompact, or that they are geometrically finite with all cusps of rank $n-1$ and critical exponents $\delta_{\Gamma_i}>n-\frac{5}{4}.$

Let $\mu$ be an ergodic $U$-joining for $(\br_1, \br_2)$. In \S\ref{section; fibers finite}, we prove that it must be the case that $\br$-a.e.\ fiber of $\pi_2$, the projection map from $X \to X_2$, is finite, as otherwise the joining measure $\mu$ would be invariant under a nontrivial connected subgroup of $U \times \{1_G\}$, which is impossible by \cite[Lemma 7.16]{joinings}. More precisely, in \S\ref{section; fibers finite}, we prove:

\begin{theorem}[c.f. \cite{joinings}, Theorem 7.17]
	There exists a positive integer $\ell >0$ and a $\br$-conull subset $\tilde{X}_2 \subseteq X_2$ so that $\pi_2\inv(x_2)$ has cardinality $\ell$ for every $x_2 \in \tilde{X}_2$. Moreover, the fiber measures $\mu_{x_2}^{\pi_2}$ are uniform measures for each $x_2 \in \tilde{X}_2$.
	\label{thm; 717}
\end{theorem}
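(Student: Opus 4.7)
The plan is to adapt the strategy of \cite[Theorem 7.17]{joinings}, using the non-concentration estimates of \S\ref{section; varieties} (specifically Lemma \ref{lem;int over V small, abs cty}) to push the argument from dimensions $2,3$ up to arbitrary $n$. First I would disintegrate $\mu$ along the measurable map $\pi_2$. Since $\pi_2$ intertwines the $\Delta(U)$-action on $X$ with the $U$-action on $X_2$ and the projection of $\mu$ to $X_2$ is proportional to $\br_2$, disintegration yields conditional probability measures $\{\mu_{x_2}^{\pi_2}\}$ on the fibers satisfying the covariance $(y,x_2)\mapsto(yu, x_2 u)$ pushing $\mu_{x_2}^{\pi_2}$ to $\mu_{x_2 u}^{\pi_2}$ for $\br_2$-a.e.\ $x_2$ and every $u \in U$.

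Next I would let $N(x_2):=|\supp\mu_{x_2}^{\pi_2}|\in\{1,2,\ldots\}\cup\{\infty\}$. By the covariance relation $N$ is $U$-invariant $\br_2$-a.e., so the $U$-ergodicity of $\br_2$ (see \cite{winter}) forces $N\equiv\ell$ for some $\ell\in\{1,2,\ldots,\infty\}$. On the $\br$-conull set $\tilde X_2$ where this holds, the uniformity of the fiber measures then follows from $\Delta(U)$-ergodicity of $\mu$: the multiset of atomic masses of $\mu_{x_2}^{\pi_2}$ is a $U$-invariant function of $x_2$, hence constant, and any imbalance among these masses would furnish a non-trivial $\Delta(U)$-invariant set in $X$, contradicting ergodicity of $\mu$. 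Therefore each atom has mass $1/\ell$.

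The main obstacle is ruling out $\ell=\infty$. Assume for contradiction that $\mu_{x_2}^{\pi_2}$ has infinite support for $\br_2$-a.e.\ $x_2$; the aim is to produce invariance of $\mu$ under a non-trivial connected subgroup of $U\times\{1_G\}$, contradicting \cite[Lemma 7.16]{joinings}. The method is Ratner's polynomial divergence. I would fix a large compact set $K\subset X$ of $\mu$-measure close to $1$ and, using infinitude of fibers, find $\mu$-typical pairs $(y_1,x_2),(y_2,x_2)\in K$ with $y_2=y_1g$ and $g\ne 1_G$ arbitrarily small. Decomposing $g=u_{\btau}p$ with $p\in P:=AMU^-$ via the local product structure, I would flow by $\Delta(u_\s)$ and write $p u_\s=u_{\s'(\s)}q(\s)$ with $q(\s)\in P$; the ``drift'' $q(\s)$ varies polynomially in $\s$, and taking $\s$ to make $q(\s)$ of a fixed size $\eta$ and then letting $g\to 1_G$ produces a limit pair of $\mu$-generic points differing by an element of $P\setminus\{1_G\}$, modulo a $U$-component.

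The key subtlety in the limit is ensuring that the drift direction $q(\s)$ does not concentrate on any proper subvariety of $P$ as the pair $(y_1,y_2)$ varies; this is precisely what is achieved by the non-concentration of PS measure near varieties proved in \S\ref{section; varieties}, in particular Lemma \ref{lem;int over V small, abs cty}, which controls the contribution of the ``bad'' $\s$-set in the $\br$-integrals used to locate typical pairs. After extracting the limit, one obtains a $\mu$-a.e.\ $\Delta(U)$-equivariant set-valued map (as in Theorem \ref{thm; full rigidity thm}) together with an extra direction of symmetry; invoking the rigidity theorems of \S\ref{section; rigidity} to absorb the $P$-component into the $U$-equivariant structure, the remaining symmetry translates into invariance of $\mu$ under a non-trivial one-parameter subgroup of $U\times\{1_G\}$, contradicting \cite[Lemma 7.16]{joinings}. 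Hence $\ell<\infty$, completing the proof.
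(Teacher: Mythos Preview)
Your overall strategy is in the right spirit, but there are two genuine gaps.

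\medskip

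\textbf{Wrong dichotomy.} You split into finite versus infinite support of $\mu_{x_2}^{\pi_2}$, and propose to rule out $\ell=\infty$ by finding pairs $(y_1,x_2),(y_2,x_2)\in K$ with $y_2=y_1g$, $g\ne 1_G$ arbitrarily small. But infinite support does not by itself furnish such accumulation: a purely atomic probability measure with well-separated atoms (e.g.\ masses $2^{-k}$ at points marching off to infinity in $X_1$) has infinite support yet no two support points are close. The paper instead splits into \emph{atomic versus non-atomic}: it assumes the fiber measure has a nontrivial continuous part $(\mu_{x_2}^{\pi_2})^c$, builds $Q_\eps$ inside $\{(x_1,x_2):x_1\in\supp(\mu_{x_2}^{\pi_2})^c\}$, and then accumulation is automatic since a continuous measure has no isolated support points. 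Once atomicity is established, your own max-mass ergodicity argument (the set $Z$ of maximal-mass atoms is $\Delta(U)$-invariant, hence $\mu$-conull) simultaneously yields both uniformity \emph{and} finiteness: all atoms have the same mass $m>0$, so there are exactly $1/m$ of them. No separate ``rule out $\ell=\infty$'' step is needed.

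\medskip

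\textbf{Circular use of \S\ref{section; rigidity}, and the limit is already in $U$.} You propose to invoke Theorem~\ref{thm; full rigidity thm} to ``absorb the $P$-component'' of the drift. This is circular: the hypotheses of \S\ref{section; rigidity} require a set-valued map of fixed finite cardinality $\ell$, which is exactly what Theorem~\ref{thm; 717} is proving. More to the point, no such absorption is needed. The paper's Theorem~\ref{thm; 712} sets $\varphi_m(\t)=u_\t^{-1}g_m u_\t$ and rescales to $\tilde\varphi_m(\t)=\varphi_m(T_m\t)$ on $B_U(1)$; the uniform limit $\tilde\varphi$ automatically takes values in $C_G(U)=U$, so the limiting displacement $(\tilde\varphi(\t_\infty),1_G)$ already lies in $U\times\{1_G\}$. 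The only issue is to guarantee $\tilde\varphi(\t_\infty)\ne 1_G$, and \emph{this} is where Lemma~\ref{lem;int over V small, abs cty} enters: it feeds into Corollary~\ref{cor uniformity Qeps}, which lets one restrict the time parameter to $I_{\eps'}(T)=B_U(T)\setminus a_{-s}N_{\eps'}(\tilde\varphi-1_G)a_s$ while preserving the ergodic-average estimates, so that $\t_\infty$ can be chosen outside the zero set of $\tilde\varphi-1_G$. One then applies \cite[Lemmas~7.7 and~7.3]{joinings} directly, with no appeal to \S\ref{section; rigidity}.
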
 

This will allow us to reduce to considering $U$-equivariant set-valued maps, which we proved rigidity for in \S\ref{section; rigidity}. Specifically, we will define for $x \in \tilde{X}_2$ from the previous theorem $$\Up(x):=\pi_1(\pi_2\inv(x_2)).$$ In \S\ref{section; final joining classification}, we prove the following more precise formulation of Theorem \ref{thm; final classification}, assuming Theorem \ref{thm; 717}: %using Theorems \ref{thm; rigidity; AM part main thm} and \ref{thm;rigidity;U- main thm}, as in the proof of \cite[Prop. 7.23]{joinings}:

\begin{theorem}\label{thm; finite cover}
	Let $\hat{\Up}(x):= \tup(x)u_{\bsig_0}$, where $\tup$ and $\bsig_0$ are as in Theorem \ref{thm; rigidity; AM part main thm}. Then there exists $q_0 \in G$ such that $\Gamma_2 \cap q_0\inv \Gamma_1 q_0$ has finite index in $\Gamma_2$ and satisfying: if $\gamma_i \in \Gamma_2$, $1 \le i \le \ell$, are such that $\Gamma_1 q_0 \Gamma_2 = \bigcup\limits_{1\le i \le \ell} \Gamma_1 q_0 \gamma_i$, then $$\hat{\Up}(\Gamma_2 g) = \{\Gamma_1 q_0 \gamma_i g : 1 \le i \le \ell\}$$ on a $\br$-conull subset of $X_2$. Moreover, the joining $\mu$ is a $\Delta(U)$-invariant measure supported on $\{(\hat{\Up}(x_2),x_2) : x_2 \in X_2\}$, and hence is a finite cover self-joining as in Definition \ref{defn; finite cover self joining}.
\end{theorem}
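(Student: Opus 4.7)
The plan is to bootstrap the $AMU$- and $U^-$-equivariance of $\hat{\Up}$ (from Theorems \ref{thm; rigidity; AM part main thm} and \ref{thm;rigidity;U- main thm}) to full $G$-equivariance on a $\br$-conull set, and then extract the commensurability statement from the $\Gamma_2$-equivariance that comes automatically from well-definedness on $X_2 = \Gamma_2\backslash G$.

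\textbf{Step 1 (upgrade to $G$-equivariance).} Consider
\[
H := \{h\in G : \hat{\Up}(xh)=\hat{\Up}(x)h \text{ on a } \br\text{-conull set}\}.
\]
Using a Fubini argument together with the fact that translation by any $h \in G$ preserves $\br$-null sets within $\supp\br$ (quasi-invariance), $H$ is closed under products and inverses, hence a subgroup of $G$. Since $H \supseteq AMU \cup U^-$ and these together generate an open neighborhood of $1_G$ in the connected simple group $G=\SO(n,1)^\circ$, we conclude $H=G$. This produces a $\br$-conull, $G$-invariant set $X_2^\dagger$ on which $\hat{\Up}$ is $G$-equivariant.

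\textbf{Step 2 (extract commensurability).} Pick $g_0\in G$ with $\Gamma_2 g_0\in X_2^\dagger$ and write $\hat{\Up}(\Gamma_2 g_0) = \{\Gamma_1 k_1,\ldots,\Gamma_1 k_\ell\}$. By $G$-equivariance, $\hat{\Up}(\Gamma_2 g_0 g) = \{\Gamma_1 k_i g\}$ for a.e.\ $g\in G$. Well-definedness at $\Gamma_2 g_0 \gamma' g$ for $\gamma'\in g_0\inv\Gamma_2 g_0$ forces $\{\Gamma_1 k_i \gamma'\} = \{\Gamma_1 k_i\}$ as sets, so $g_0\inv\Gamma_2 g_0$ acts by permutations on $\{\Gamma_1 k_i\}$. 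This permutation action must be transitive: any nontrivial orbit decomposition would split $\hat{\Up}$ into two $G$-equivariant measurable sub-maps, which would decompose the graph support of $\mu$ into two $\Delta(U)$-invariant pieces of positive measure, contradicting $\Delta(U)$-ergodicity of $\mu$. Setting $q_0 := k_1 g_0\inv$, the stabilizer of $\Gamma_1 q_0$ in $\Gamma_2$ equals $\Gamma_2\cap q_0\inv\Gamma_1 q_0$, which by transitivity has index $\ell$ in $\Gamma_2$. Choosing $\gamma_i\in\Gamma_2$ with $\Gamma_1 q_0\gamma_i = \Gamma_1 k_i g_0\inv$ yields both the disjoint decomposition $\Gamma_1 q_0\Gamma_2=\bigsqcup_i \Gamma_1 q_0\gamma_i$ and the identity $\hat{\Up}(\Gamma_2 g)=\{\Gamma_1 q_0\gamma_i g : 1 \le i \le \ell\}$ on the $\br$-conull set $X_2^\dagger$.

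\textbf{Step 3 (identify the measure).} By Theorem \ref{thm; 717} the joining $\mu$ is supported on the graph of $\Up$ with uniform fiber measures, and $\Up(x) = \hat{\Up}(x)u_{-\bsig_0}$, so
\[
\supp\mu = \{(\Gamma_1 q_0\gamma_i g\, u_{-\bsig_0},\, \Gamma_2 g) : g\in G,\, 1\le i\le \ell\}.
\]
This is exactly the $(u_{-\bsig_0}, 1_G)$-translate of the image of the map $[g]\mapsto (\Gamma_1 q_0 g, \Gamma_2 g)$ from Definition \ref{defn; finite cover self joining}, applied to $(q_0\inv\Gamma_1 q_0\cap\Gamma_2)\backslash G$. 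The pushforward of $\br_{q_0\inv\Gamma_1 q_0\cap\Gamma_2}$ under this map, translated by $(u_{-\bsig_0},1_G)$, is a locally finite $\Delta(U)$-invariant, $\Delta(U)$-ergodic measure (the latter by $U$-ergodicity of the BR measure, \cite{winter}) with the same support as $\mu$ and second marginal proportional to $\br_2$. Uniqueness of the $U$-ergodic BR measure up to scalar on the commensurable quotient then forces $\mu$ to coincide with this measure, proving $\mu$ is a finite cover self-joining.

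The principal technical obstacle is Step 1: the Fubini argument that makes $H$ a subgroup must be executed with care, since the $\br$-conull sets produced in \S\ref{section; rigidity} are not a priori $G$-invariant, and one must track quasi-invariance of $\br$ under translation by elements outside $AMU \cup U^-$. Once full $G$-equivariance is secured, the algebra of Step 2 and the measure uniqueness of Step 3 follow essentially by bookkeeping.
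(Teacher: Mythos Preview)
Your three-step outline matches the route the paper takes (it defers to \cite[Prop.~7.23]{joinings}), and Steps~2 and~3 are correct once full $G$-equivariance on a $\br$-conull set is in hand.

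The gap is in Step~1. The claim that translation by any $h\in G$ preserves $\br$-null sets is false already for $h\in U^-$. In local coordinates $\br$ decomposes as Lebesgue along $U$ times the PS measure $\nu_o$ in the $w^-$ direction (times $ds\,dm$); since $\nu_o$ is supported on the fractal $\Lambda(\Gamma_2)$ and a generic $v\in U^-$ moves $\Lambda(\Gamma_2)$ off itself on the boundary, $\nu_o$ is not quasi-invariant under the boundary action induced by $v$, and hence $\br$ is not quasi-invariant under right translation by $v$. Your argument that $H$ is closed under multiplication therefore breaks exactly when you compose an $AMU$-element with a $U^-$-element: you need $X_2' v^{-1}$ to be $\br$-conull, and it need not be. Your closing paragraph flags trouble ``outside $AMU\cup U^-$'', but the obstruction is already present at $U^-$ itself.

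The repair, which is how \cite[Prop.~7.23]{joinings} proceeds, is to abandon any quasi-invariance claim for $U^-$ and instead exploit the precise output of Theorem~\ref{thm; full rigidity thm}: a \emph{single} $U$-invariant $\br$-conull set $X_2'$ on which $\hat\Up$ is $AMU$-equivariant for \emph{every} $h\in AMU$, and $U^-$-equivariant whenever both endpoints lie in $X_2'$. Pass to the map $g\mapsto \hat\Up(\Gamma_2 g)\,g^{-1}$ on $G$; it is right-$AMU$-invariant everywhere on $\pi^{-1}(X_2')$ and right-$U^-$-invariant in the paired sense. Since $X_2'$ is $U$-invariant it is also $\bms$-conull by (\ref{eqn;reln bt br and bms}); a Fubini argument along $U^-$-leaves against the \emph{PS} conditional (not Lebesgue), together with $A$-ergodicity of $\bms$, then shows this map is $\bms$-a.e.\ constant. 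Passing back via (\ref{eqn;reln bt br and bms}) produces the fixed $q_0$ and the formula for $\hat\Up$ on a $\br$-conull set, without ever invoking $U^-$-quasi-invariance of either measure.
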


\subsection{Proof of Theorem \ref{thm; final classification}}\label{section; final joining classification}

We show in this section how to use Theorem \ref{thm; 717} to prove Theorem \ref{thm; finite cover}, which is a more precise statement of Theorem \ref{thm; final classification}. 

By Theorem \ref{thm; 717}, there exists a $\br$-conull set $\tilde{X}_2$ and a natural number $\ell>0$ such that $$\Up(x) := \pi_1(\pi_2\inv(x_2))$$ has cardinality $\ell$ for all $x \in \tilde{X}_2$. Moreover, this is a $U$-equivariant condition, so we may assume that $\tilde{X}_2$ is $U$-invariant and that $$\Up(xu)=\Up(x)u$$ for all $x \in \tilde{X}_2$ and $u \in U$. By a standard argument for constructing cross sections, there exist measurable maps $\up_1,\ldots, \up_\ell :\tilde{X}_2 \to X_1$ such that $$\Up(x) = \{\up_1(x),\ldots,\up_\ell(x)\}.$$

The proof of Theorem \ref{thm; finite cover} now follows as in \cite[Prop. 7.23]{joinings}, where references to Theorems 6.1 and equation (7.21) are replaced with references to Theorems \ref{thm; rigidity; AM part main thm} and \ref{thm;rigidity;U- main thm}. %This completes the proof of Theorem \ref{thm; final classification}.

\subsection{Notation} \label{section; joining notation}
We provide here for the readers convenience a list of important notation that will be used in \S\ref{section; fibers finite}. Full explanations appear in that section, which should be read first, using this section for reference when needed.

\subsubsection*{The measure $\mu$} $\mu$ is an ergodic $U$-joining on $X = X_1 \times X_2$ for the pair $(\br_1, \br_2)$.% Note that we will often omit the subscripts on $\br$.
\subsubsection*{$\psi$ and $\Psi$} $\psi \in C_c(X_1)$ is non-negative with $\br(\psi)>0$. Let $$\Psi = \psi \circ \pi_1 \in C(X).$$
\subsubsection*{The set $\Omega_1$} A compact set $\Omega_1 \subseteq \{x:x^-\in\Lambda_{\operatorname{r}}(\Gamma)\}$ with $\br(\Omega_1)>0$ such that  \[\lim\limits_{T \to \infty} \frac{1}{\ps_x(B_U(T))} \int_{B_U(T)} \psi(xu_\t)d\t = \br(\psi)\] holds uniformly for all $x \in \Omega_1$. %(See \cite[Remark 4.8]{joinings}.)
\subsubsection*{The set $Q$} A compact set $Q \subseteq X$ with $\mu(Q) \in (0,\infty)$, $\pi_1(Q) \subseteq \Omega_1$, and such that for all $f \in C_c(X)$ and all $x \in Q$, \begin{equation}\lim\limits_{T \to \infty} \frac{\int_{B_U(T)} f(x\Delta(u_\t))d\t}{\int_{B_U(T)} \Psi(x\Delta(u_\t))d\t} = \frac{\mu(f)}{\mu(\Psi)}.\end{equation} %Such a set exists by the Hopf ratio ergodic theorem.
\subsubsection*{$\eps$, $\eta_0$ and the sets $Q_+, Q_{++}$} Fix $0<\eps<1$ satisfying $$(1+2\eps)^{-2} > 1/2$$ and $\eta_0>0$ such that $$\mu(Q_{++}) \le (1+\eps)\mu(Q),$$ where \[Q_{++} := Q (B(\eta_0)\times B(\eta_0)) = Q\{(g,g) \in G \times G : \|g-I\| \le \eta_0\}.\] Define \[Q_+ := Q (B(\eta_0/4)\times B(\eta_0/4)) = Q\{(g,g) \in G\times G : \|g-I\|\le \eta_0/4\}.\]
\subsubsection*{$\phi$ and $\Phi$} $\phi \in C_c(X_1)$ is such that $$\1_{\pi_1(Q_{++})} \le \phi \le 1,$$ where $\1_E$ denotes the characteristic function of a set $E$ in $X$. Let $\Phi = \phi \circ \pi_1$. 
\subsubsection*{The set $Q_\eps$ and the family $\mathcal{F}$} Let $\mathcal{F} = \{\1_Q, \1_{Q_+}, \1_{Q_{++}}, \Phi\}$. $Q_\eps\subseteq Q$ is a compact set with $$\mu(Q_\eps)>(1-\eps)\mu(Q)$$ such that for each $f \in C_c(X) \cup \mathcal{F}$ and $\theta>0$, there exists $T_0 = T_0(f,\theta)$ such that if $T\ge T_0$, then \[\left|\frac{\int_{B(T)} f(x\Delta(u_\t))d\t}{\int_{B(T)}\Psi(x \Delta (u_\t))d\t} - \frac{\mu(f)}{\mu(\Psi)}\right| \le \theta\] for all $x \in Q_\eps$. 
\subsubsection*{The functions $\varphi_m$} Suppose that we have a sequence $g_m \in G-U$ with $g_m \to 1_G$ and a point $x=(x_1,x_2) \in Q_\eps$ such that $(x_1g_m,x_2)\in Q_\eps$ for all $m$. For each $m \ge 0$, $$\varphi_m(\t) := u_\t\inv g_m u_\t.$$ %In particular, $\varphi_m(\t)$ satisfies $$x\Delta(u_\t) = x(\varphi_m(\t),1_G)\Delta(u_\t).$$
\subsubsection*{The values $T_m$} Define $T_m := \sup\{T > 0 : \varphi_m(B_U(T))\subseteq B(1)\}$. %Since $g_m \not\in U = N_G(U)$ (where (U)$ denotes the normalizer in $G$ of $U$), $\varphi_m(\t)$ is not constant, and so $T_m < \infty$. Moreover, $T_m \to \infty$ because $g_m \to 1_G$.
\subsubsection*{The functions $\tilde{\varphi}_m$ and $\tilde{\varphi}$} On $B_U(1)$, define $$\tilde{\varphi}_m(\t) = \varphi_m(T_m \t).$$ By equicontinuity of the entries of the $\tilde{\varphi_m}$'s, we may assume that there exists some $\tilde{\varphi}$ defined on $B_U(1)$ such that $\tilde{\varphi}_m \to \tilde{\varphi}$ uniformly on $B_U(1)$. %Observe that $\tilde{\varphi}$ maps into $N_G(U) = U$ by construction of the $\varphi_m$'s.
\subsubsection*{The sets $N_{\eps'}(\tilde{\varphi}-1_G)$, and $I_{\eps'}(T)$} For $\eps'>0$, define $$N_{\eps'}(\tilde{\varphi}-1_G) := \{\t \in B_U(1) : \|f(\t)-1_G\|<\eps'\}.$$

For $T>0$, let $s = \log T$ and define $$I_{\eps'}(T) = B_U(T) - a_{-s}N_{\eps'}(\tilde{\varphi}-1_G)a_s.$$

\subsection{Fibers of $\pi_2$ are finite} \label{section; fibers finite}

Recall that $\mu$ is an ergodic $U$-joining on $X = X_1 \times X_2$ for $(\br_1,\br_2)$. Let \begin{itemize}
	\item $\psi \in C_c(X_1)$ be non-negative with $\br(\psi)>0$,
	\item and let $\Psi = \psi \circ \pi_1 \in C(X)$.
\end{itemize}

Recall that $$N_{1/2}(\supp\bms):=(\supp\bms)B_U(1/2).$$

Let $\Omega_1 \subseteq \{x:x^-\in\Lambda_{\operatorname{r}}(\Gamma)\}$ be a compact set with $\br(\Omega_1)>0$ such that \cite[Lemma 4.6]{joinings} holds for $\psi$ uniformly across all $x \in \Omega_1$. That is, the convergence \begin{equation}\lim\limits_{T \to \infty} \frac{1}{\ps_x(B_U(T))} \int_{B_U(T)} \psi(xu_\t)d\t = \br(\psi)\label{eqn;defn of omega1}\end{equation} holds uniformly for all $x \in \Omega_1$. (By Egorov's theorem, such a compact set exists within any set with positive $\br$ measure, see \cite[Remark 4.8]{joinings}.)

By the Hopf ratio ergodic theorem, there exists a compact set $Q \subseteq X$ such that \begin{itemize}
	\item $\mu(Q) \in (0,\infty)$,
	\item $\pi_1(Q) \subseteq \Omega_1$,
	\item and for all $f \in C_c(X)$ and all $x \in Q$, \begin{equation}\lim\limits_{T \to \infty} \frac{\int_{B_U(T)} f(x\Delta(u_\t))d\t}{\int_{B_U(T)} \Psi(x\Delta(u_\t))d\t} = \frac{\mu(f)}{\mu(\Psi)}.\label{eqn beginning hopf}\end{equation} 
\end{itemize} 

Fix $0<\eps<1$ satisfying $$(1+2\eps)^{-2} > 1/2$$ (this condition is needed to ensure a non-empty intersection in the claim in the proof of Theorem \ref{thm; 712}) and $\eta_0>0$ such that $$\mu(Q_{++}) \le (1+\eps)\mu(Q),$$ where \[Q_{++} := Q (B(\eta_0)\times B(\eta_0)) = Q\{(g,g) \in G \times G : \|g-I\| \le \eta_0\}.\] Also define \[Q_+ := Q (B(\eta_0/4)\times B(\eta_0/4)) = Q\{(g,g) \in G\times G : \|g-I\|\le \eta_0/4\}.\]

Let $\phi \in C_c(X_1)$ be such that $$\1_{\pi_1(Q_{++})} \le \phi \le 1,$$ where $\1_E$ denotes the characteristic function of a set $E$ in $X$. Let $$\Phi = \phi \circ \pi_1,$$ and define 
$$\mathcal{F} = \{\1_Q, \1_{Q_+}, \1_{Q_{++}}, \Phi\}.$$ By the Hopf ratio ergodic theorem again together with Egorov's theorem, there exists a compact set $$Q_\eps\subseteq Q$$ with $$\mu(Q_\eps)>(1-\eps)\mu(Q)$$ such that for each $f \in C_c(X) \cup \mathcal{F}$, the convergence in equation (\ref{eqn beginning hopf}) holds uniformly for all $x \in Q_\eps$. That is, for all $f \in C_c(X)\cup\mathcal{F}$ and $\theta>0$, there exists $T_0 = T_0(f,\theta)$ such that if $T\ge T_0$, then \[\left|\frac{\int_{B(T)} f(x\Delta(u_\t))d\t}{\int_{B(T)}\Psi(x \Delta (u_\t))d\t} - \frac{\mu(f)}{\mu(\Psi)}\right| \le \theta\] for all $x \in Q_\eps$.

The proof of Theorem \ref{thm; 717} will follow as in the proof of \cite[Theorem 7.17]{joinings} once we establish the following generalization of \cite[Theorem 7.12]{joinings}:

\begin{theorem}[c.f. \cite{joinings}, Theorem 7.12]
	Suppose that there exists $x=(x_1,x_2) \in Q_\eps$ and a sequence $g_m \in G-U$ with $g_m \to 1_G$ such that $(x_1g_m,x_2)\in Q_\eps$ for all $m$. Then $\mu$ is invariant under a nontrivial connected subgroup of $U \times \{1_G\}$.
	\label{thm; 712}
\end{theorem}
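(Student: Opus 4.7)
The plan is to use polynomial divergence of the $U$-action to rescale the translates $g_m$ into a nontrivial $U$-valued polynomial map $\tilde\varphi$, and then use the uniform Hopf ratio convergence on $Q_\eps$ at the two base points $x$ and $(x_1g_m, x_2)$ to produce an averaging identity forcing $\mu$ to be invariant under a nontrivial family of elements of $U \times \{1_G\}$. Following \S\ref{section; joining notation}, the entries of $\varphi_m(\t) = u_\t\inv g_m u_\t$ are polynomials of bounded degree in $\t$ whose coefficients depend continuously on $g_m$ and vanish as $g_m\to 1_G$, so $T_m\to\infty$; the rescalings $\tilde\varphi_m$ satisfy $\sup_{B_U(1)}\|\tilde\varphi_m - 1_G\| = 1$, so Arzel\`a--Ascoli gives, on a subsequence, a nontrivial uniform limit $\tilde\varphi$ on $B_U(1)$. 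A direct matrix computation, splitting $g_m\in G-U$ along $G = (AMU^-)\cdot U$ and examining the leading-degree part of $\varphi_m - 1_G$, shows that the dominant polynomial growth of $\|\varphi_m(\t)-1_G\|$ lies in $U$, so $\tilde\varphi(\s) = u_{q(\s)}$ for some nonconstant polynomial map $q: B_U(1) \to \R^{n-1}$ with $q(0) = 0$.

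For the invariance argument, define $(R_v f)(y_1, y_2) := f(y_1 u_v, y_2)$ for $v \in \R^{n-1}$ and $f \in C_c(X)$, and arrange $0 \le f \le c\Psi$ (by enlarging $\psi$ at the outset). The uniform Hopf ratio on $Q_\eps$ applied at both $x$ and $(x_1g_m, x_2)$ forces
\[
\frac{\int_{B_U(T_m)} f((x_1g_m,x_2)\Delta(u_\t))\,d\t}{\int_{B_U(T_m)} \Psi((x_1g_m,x_2)\Delta(u_\t))\,d\t} \longrightarrow \frac{\mu(f)}{\mu(\Psi)}.
\]
Substituting $\t = T_m\s$ rewrites each integrand as an evaluation at $(x_1 u_{T_m\s}\tilde\varphi_m(\s), x_2 u_{T_m\s})$. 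Split $B_U(1) = N_{\eps'}(\tilde\varphi - 1_G)\sqcup(B_U(1)-N_{\eps'}(\tilde\varphi-1_G))$. By Lemma \ref{lem;int over V small, abs cty}, the $T_m \cdot N_{\eps'}$ piece contributes negligibly to both integrals, with error vanishing as $\eps' \to 0$ uniformly in $m$. On the complement, partition into sub-balls $B_i$ on which $u_{q(\s)}$ is approximately constant equal to $u_{q(\s_i)}$ and apply a sub-ball version of the Hopf ratio (obtained by differencing two concentric ball integrals). Summing, then taking $m\to\infty$ and $\eps'\to 0$, yields
\[
\mu(f) = \int_{B_U(1)} \mu\bigl(R_{q(\s)} f\bigr)\,d\nu(\s)
\]
for a probability measure $\nu$ on $B_U(1)$. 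Since this holds for all $f \in C_c(X)$, a standard convexity argument shows $\mu$ is invariant under $(u_{q(\s)}, 1_G)$ for $\nu$-a.e.\ $\s$; as $q$ is nonconstant and $\nu$ has full support (inherited from the PS measure), these elements generate a nontrivial connected subgroup of $U$, which establishes the conclusion.

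The main obstacle is the sub-ball step in the middle of the invariance argument. The uniform Hopf ratio on $Q_\eps$ is stated only for balls $B_U(T)$ based at the identity, but we must apply it to sub-regions $T_m B_i$ of $B_U(T_m)$. Expressing each sub-integral as a difference of two concentric ball integrals requires shifted base points lying in $Q_\eps$, which is not automatic; a Besicovitch-type covering argument will be needed, with the key error controlled precisely by the non-concentration estimate of Lemma \ref{lem;int over V small, abs cty} from \S\ref{section; varieties}, which is where the critical-exponent hypothesis $\delta_{\Gamma_i} > n - \frac{5}{4}$ (in the geometrically finite case) enters.
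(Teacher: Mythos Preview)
Your setup through the construction of $\tilde\varphi$ matches the paper, but the invariance argument diverges sharply and the gap you flag in your final paragraph is real and not repaired by a Besicovitch covering. The sub-ball ratio you need,
\[
\frac{\int_{T_m B_i} (R_{q(\s_i)} f)(x\Delta(u_\t))\,d\t}{\int_{T_m B_i} \Psi(x\Delta(u_\t))\,d\t} \longrightarrow \frac{\mu(R_{q(\s_i)} f)}{\mu(\Psi)},
\]
requires the Hopf ratio at the translated base point $x\Delta(u_{T_m\s_i})$, and nothing in the construction forces this point to lie in $Q_\eps$. A covering lemma only controls overlap multiplicity; it cannot manufacture uniform convergence at points where none is assumed. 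The subsequent claims about the limit measure $\nu$ having full support ``inherited from the PS measure'' and the convexity step are therefore built on a step that has not been carried out.

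The paper avoids this entirely by a different mechanism. Rather than aiming for an averaging identity $\mu(f)=\int\mu(R_{q(\s)}f)\,d\nu(\s)$, it uses the nested sets $Q\subset Q_+\subset Q_{++}$ with $\mu(Q_{++})\le(1+\eps)\mu(Q)$ to run a pigeonhole argument on the excised region $I_{\eps'}(T)$: since $x\Delta(u_\t)\in Q$ forces $xh_m\Delta(u_\t)\in Q_+$ for $|\t|\le T_m'$, the ratio bounds from Corollary~\ref{cor uniformity Qeps} show that the return sets of $x$ and $xh_m$ to $Q$ each occupy more than half the Lebesgue measure of the return set of $x$ to $Q_{++}$, so they intersect. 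This produces a \emph{single} time $\t_m\in I_{\eps'}(T_m')$ with both $x\Delta(u_{\t_m})$ and $xh_m\Delta(u_{\t_m})$ in $Q$; passing to limits gives a point $x_\infty\in Q$ with $x_\infty(\tilde\varphi(\t_\infty),1_G)\in Q$ and $\tilde\varphi(\t_\infty)\in U\setminus\{1_G\}$ (the excision of $N_{\eps'}(\tilde\varphi-1_G)$ is precisely what guarantees nontriviality). A single application of \cite[Lemma~7.7]{joinings} then yields quasi-invariance, upgraded to invariance by \cite[Lemma~7.3]{joinings}. No sub-ball ratios, no auxiliary measure $\nu$, no convexity argument are needed; the non-concentration input from \S\ref{section; varieties} enters only once, through Corollary~\ref{cor uniformity Qeps}.
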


The proof of Theorem \ref{thm; 712} requires several lemmas, which in turn require more setup.

Suppose that we have a sequence $g_m \in G-U$ with $g_m \to 1_G$ and a point $x=(x_1,x_2) \in Q_\eps$ such that $(x_1g_m,x_2)\in Q_\eps$ for all $m$. For each $m \ge 0$, define $$\varphi_m(\t) := u_\t\inv g_m u_\t.$$ In particular, $\varphi_m(\t)$ satisfies $$x\Delta(u_\t) = x(\varphi_m(\t),1_G)\Delta(u_\t).$$

Define $$T_m := \sup\{T > 0 : \varphi_m(B_U(T))\subseteq B(1)\}.$$ Since $g_m \not\in U = C_G(U)$ (where $C_G(U)$ denotes the centralizer in $G$ of $U$), $\varphi_m(\t)$ is not constant, and so $T_m < \infty$. Moreover, $$T_m \to \infty$$ because $g_m \to 1_G$.

On $B_U(1)$, define $$\tilde{\varphi}_m(\t) = \varphi_m(T_m \t).$$ By definition of $T_m$, each of the entries in the $\tilde{\varphi}_m$'s gives rise to a sequence of uniformly bounded polynomials with degree at most 2 on a compact domain, hence an equicontinuous family. Thus, we may assume that there exists some $\tilde{\varphi}$ defined on $B_U(1)$ such that $$\tilde{\varphi}_m \to \tilde{\varphi}$$ uniformly on $B_U(1)$. Observe that $\tilde{\varphi}$ maps into $C_G(U) = U$ by construction of the $\varphi_m$'s, so $\tilde{\varphi}\in\F_{2,k}$ for some $k>0$, where $\F_{2,k}$ is defined in \S\ref{section; varieties}.

Define $$N_{\eps'}(\tilde{\varphi}-1_G)= \{\t \in B_U(1):\|\tilde{\varphi}(\t)-1_G\|<\eps'\}.$$ For $T>0$, let $s = \log T$ and define $$I_{\eps'}(T) = B_U(T) - a_{-s}N_{\eps'}(\tilde{\varphi}-1_G)a_s.$$

%Define $$\tilde{V} = \{\t \in B_U(1): \|\tilde{\varphi}(\t) - I \| = 0\},$$ and for $\eps'>0$, $$N_{\eps'}(\tilde{V}) := \tilde{V}B_U(\eps')\cap B_U(1)= \{v u_\t : v \in \tilde{V}, \t \in B_U(\eps')\}\cap B_U(1).$$ 

\begin{lemma}
	For every $0<\eta'<1/2$, there exists $\eps'>0$ and $T_0>0$ such that for all $T \ge T_0$, for all $F \in \{\Psi, \Phi\}$, and for all $y \in Q_\eps$, we have that \[\frac{\int_{a_{-s}N_{\eps'}(\tilde{\varphi}-1_G)a_{s}}F(y\Delta(u_\t))d\t}{\int_{I_{\eps'}(T_m)} \Psi(y\Delta(u_\t))d\t} \le c_1\eta',\] for some constant $c_1 \ge 1$, where $s = \log T$.% and $I_{\eps'}(T) := B_U(T) - a_{-s}N_{\eps'}(\tilde{V})a_{s}$.
	\label{lem; quotient over psi on N unif small}
\end{lemma}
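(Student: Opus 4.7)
The plan is to estimate the numerator and denominator separately, using Lemma \ref{lem;int over V small, abs cty} for the numerator and the uniform equidistribution in equation (\ref{eqn;defn of omega1}) for the denominator. For $F\in\{\Psi,\Phi\}$ and $y=(y_1,y_2)\in Q_\eps$, both integrals reduce to integrals on $X_1$: $F(y\Delta(u_\t))=f(y_1 u_\t)$ with $f\in\{\psi,\phi\}\subseteq C_c(X_1)$. Via the identification $U\cong\R^{n-1}$, the limit polynomial $\tilde{\varphi}-1_G$ has coordinate entries of degree at most $2$ with uniformly bounded coefficients (inherited from the $\tilde\varphi_m$'s being trapped in $B(1)$ at scale $T_m$), so it lies in $\F_{2,k}$ for a suitable $k>0$.

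For the numerator, apply Lemma \ref{lem;int over V small, abs cty} to $\Gamma_1$, to $g=\tilde{\varphi}-1_G$, and to $f$. For any $\eta>0$ (to be chosen), this yields $\eps'>0$ and $T_0>0$ such that whenever $T_m\ge T_0$ and $y_1 a_{-s_m}\in K_1:=N_{1/2}(\supp\bms_1)\cap\{x:x^-\in\Lambda_r(\Gamma_1)\}$ (with $s_m=\log T_m$),
\[\int_{a_{-s_m}N_{\eps'}(\tilde{\varphi}-1_G)a_{s_m}} F(y\Delta(u_\t))\,d\t \;\ll_f\; \eta\, e^{\delta_{\Gamma_1}s_m}\ps_{y_1 a_{-s_m}}(B_U(1)) \;=\; \eta\,\ps_{y_1}(B_U(T_m)).\]
The condition $y_1 a_{-s_m}\in K_1$ can be arranged uniformly: $\{x:x^-\in\Lambda_r\}$ is $A$-invariant since $x^-$ is $A$-invariant, and the identity $a_{s}u_\t a_{-s}=u_{\t e^{-s}}$ shows that if $z\in\supp\bms_1$ and $u_\t\in B_U(1/2)$, then $zu_\t a_{-s}=za_{-s}\,u_{\t e^{-s}}\in N_{1/2}(\supp\bms_1)$ for $s\ge 0$. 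Since $\br_1(K_1)>0$ (the $B_U(1/2)$-thickening of $\supp\bms_1$ meets a set of positive $dm_o$-measure in the $x^+$-direction), we may restrict $Q$ at the outset so that $\pi_1(Q)\subseteq K_1$ while preserving $\mu(Q)>0$; the condition then holds for all $y\in Q_\eps$ and all $m$.

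For the denominator, since $I_{\eps'}(T_m)=B_U(T_m)\setminus a_{-s_m}N_{\eps'}(\tilde{\varphi}-1_G)a_{s_m}$,
\[\int_{I_{\eps'}(T_m)}\Psi(y\Delta(u_\t))\,d\t = \int_{B_U(T_m)}\psi(y_1 u_\t)\,d\t \;-\; \int_{a_{-s_m}N_{\eps'}(\tilde{\varphi}-1_G)a_{s_m}}\psi(y_1 u_\t)\,d\t.\]
Since $y_1\in\Omega_1$, the uniform equidistribution in (\ref{eqn;defn of omega1}) bounds the first term below by $(1-\theta)\br(\psi)\ps_{y_1}(B_U(T_m))$ for $T_m$ large, while the previous display with $f=\psi$ bounds the second term above by $c\eta\ps_{y_1}(B_U(T_m))$. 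Taking $\theta=1/4$ and $\eta$ small relative to $\br(\psi)$ gives a denominator of size at least $\tfrac{1}{2}\br(\psi)\ps_{y_1}(B_U(T_m))$, so the ratio is at most $c_1\eta/\br(\psi)$ for a constant $c_1$ depending only on $\psi$ and $\phi$, whence the claim follows by choosing $\eta$ small in terms of $\eta'$ and $\br(\psi)$. The main obstacle in this argument is arranging the recurrence condition $y_1 a_{-s_m}\in K_1$ uniformly in $y\in Q_\eps$ and $m$; this is resolved by exploiting the $a_{-s}$-invariance of $K_1$ for $s\ge 0$ noted above together with a compatible choice of the compact set $Q$.
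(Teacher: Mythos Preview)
Your approach is essentially the same as the paper's: bound the numerator via Lemma~\ref{lem;int over V small, abs cty}, bound the denominator from below using the uniform equidistribution (\ref{eqn;defn of omega1}) minus the numerator bound, and take the ratio. The only substantive difference is how you verify the hypothesis $y_1 a_{-s}\in N_{1/2}(\supp\bms)\cap\{x:x^-\in\Lambda_r\}$ of Lemma~\ref{lem;int over V small, abs cty}. You propose modifying the construction of $Q$ so that $\pi_1(Q)\subseteq K_1$ from the outset and then invoking the forward $a_{-s}$-invariance of $K_1$. The paper instead leaves $Q$ as constructed and uses Lemma~\ref{lem;PS inf kappa for compact set}: since $\Omega_1$ is compact and $\Omega_1\subseteq\{x:x^-\in\Lambda_r\}$, there is a uniform $\kappa=\kappa(\Omega_1)$ with $xB_U(\kappa)\cap\supp\bms\ne\emptyset$ for every $x\in\Omega_1$; the same conjugation $xa_{-\log T}=(xu_\t)a_{-\log T}u_{-\t/T}$ you used then gives $xa_{-\log T}\in N_{1/2}(\supp\bms)$ once $T\ge 2\kappa$. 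This is preferable because it proves the lemma for the $Q_\eps$ already fixed rather than retroactively altering the setup, and it shows your extra restriction on $Q$ is unnecessary: the recurrence condition holds automatically for all $y\in Q_\eps$ once $T$ is large, purely from the compactness of $\Omega_1$.
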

\begin{proof}
	Let $\kappa = \kappa(\Omega_1)$ be as in Lemma \ref{lem;PS inf kappa for compact set}. Since $\pi_1(Q_\eps) \subseteq \Omega_1$, it follows from equation (\ref{eqn;defn of omega1}) that there exists $T_1 \ge \kappa$ such that for all $T \ge T_1$ and all $y \in Q_\eps$, \begin{align}\label{eqn lower bound for psi}\int_{B_U(T)} \Psi(y\Delta(u_\t))d\t = \int_{B_U(T)}\psi(\pi_1(y)u_\t)d\t &\ge \frac{1}{2}\ps_{\pi_1(y)}(B_U(T))\br(\psi)>0.\end{align}
	
%	By equation (\ref{eqn;defn of omega1}) and since $\pi_1(Q_\eps)\subseteq \Omega_1$, there exists $T_1\ge \kappa(\Omega_1)$, where $\kappa(\Omega_1)$ is as in Lemma \ref{lem;PS inf kappa for compact set}, such that for all $T\ge T_1$ and for all $y \in Q_\eps$, $T \ge T_1$ implies 
	
	By definition of $\kappa$, for all $x \in \Omega_1,$ $$xB_U(\kappa)\cap \supp\bms \ne \emptyset.$$ From this, it follows that there exists $T_2 \ge T_1$ such that for all $T \ge T_2$, $$xa_{-\log T} \in N_{1/2}(\supp\bms) \cap \{x : x^- \in \Lambda_{\operatorname{r}}(\Gamma)\},$$ where $N_{1/2}(\supp\bms):=(\supp\bms)B_U(1/2).$

	Now, let $f = \psi$ if $F = \Psi$, and $f = \phi$ if $f=\Phi$. Fix $0<\eta'<1$ and let $$\eta = \frac{1}{2D}\eta'\br(f),$$ where $D$ is the implied constant from Lemma \ref{lem;int over V small, abs cty} applied to $f$ and $N_{\eps'}(\tilde{\varphi}-1_G)$. That is, there exists $\eps'>0$ and $T_0 \ge T_2$ such that for all $T \ge T_0$ and all $w \in \Omega_1$, $$e^{-\delta_{\Gamma_1} s}\int_{a_{-s}N_{\eps'}(\tilde{\varphi}-1_G)a_s} f(wu_\t)d\t \le D\eta \ps_{wa_{-s}}(B_U(1)),$$ where $s=\log T$. In particular, this implies that for all $T \ge T_0$ and $w \in  \Omega_1$, \begin{equation}\int_{a_{-s}N_{\eps'}(\tilde{\varphi}-1_G)a_s} f(wu_\t)d\t \le \frac{1}{2}\eta' \br(f)\ps_{w}(B_U(T)),\label{eqn upper bound for phi,psi}\end{equation} where we have used that $\ps_w(B_U(T)) = e^{\delta_{\Gamma_1} s}\ps_{wa_{-s}}(B_U(1))$.
	
	By subtracting equation (\ref{eqn upper bound for phi,psi}) for $\psi$ from (\ref{eqn lower bound for psi}), we conclude that for all $T \ge T_0$ and for all $w \in \Omega_1$, \begin{equation}
	\int_{I_{\eps'}(T_m)} \psi(wu_\t)d\t \ge \frac{1}{2}(1-\eta') \ps_w(B_U(T))\br(\psi). \label{eqn final lower bound psi}
	\end{equation}
	
	Then from equations (\ref{eqn upper bound for phi,psi}) and (\ref{eqn final lower bound psi}), we have that for all $T \ge T_0$ and $y \in \Omega_1$, \[\frac{\int_{a_{-s}N_{\eps'}(\tilde{\varphi}-1_G)a_s} f(yu_\t)d\t}{\int_{I_{\eps'}(T_m)} \psi(yu_\t)d\t} \le c_1 \frac{\eta'}{1-\eta'} \le c_1 \eta',\] for $c_1 = \max\{\br(\phi)/\br(\psi), 1\}$, as desired.	
\end{proof}

By definition of $Q_\eps$, we have that for all $F \in \mathcal{F}$ and for all $\theta>0$, there exists $T_0 = T_0>0$ such that if $T \ge T_0$, then for all $y \in Q_\eps$, \begin{equation} \left|\frac{\int_{B_U(T)} F(y\Delta(u_\t))d\t}{\int_{B_U(T)}\Psi(y \Delta (u_\t))d\t} - \frac{\mu(F)}{\mu(\Psi)}\right| \le \theta. \label{eqn uniform Qeps}\end{equation} We can now improve this to integration over sets of the form $I_{\eps'}(T)$ as follows.

\begin{corollary}
	For all $\theta>0$, there exists $\eps'>0$ and $T_0>0$ such for all $T \ge T_0$, all $y \in Q_\eps$, and every $F \in \mathcal{F} = \{\1_Q, \1_{Q_+}, \1_{Q_{++}}, \Phi\}$, we have that \[\left|\frac{\int_{I_{\eps'}(T)} F(y\Delta(u_\t))d\t}{\int_{I_{\eps'}(T)}\Psi(y \Delta (u_\t))d\t} - \frac{\mu(F)}{\mu(\Psi)}\right| \le \theta.\]
	\label{cor uniformity Qeps}
\end{corollary}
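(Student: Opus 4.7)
The plan is to leverage equation (\ref{eqn uniform Qeps}), which already gives the desired ratio statement when we integrate over the full ball $B_U(T)$, and to show that swapping $B_U(T)$ for $I_{\eps'}(T)$ perturbs each ratio by an amount that can be made arbitrarily small by the choice of $\eps'$ (and then $T_0$). The leverage for this swap is Lemma \ref{lem; quotient over psi on N unif small}, which bounds integrals of $\Psi$ and $\Phi$ over $a_{-s}N_{\eps'}(\tilde{\varphi}-1_G)a_s$ by a small multiple of the integral of $\Psi$ over $I_{\eps'}(T)$.

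Given $\theta>0$ and $F\in\mathcal{F}$, I will denote
\[A:=\int_{I_{\eps'}(T)}F(y\Delta(u_\t))d\t,\ A':=\int_{a_{-s}N_{\eps'}(\tilde\varphi-1_G)a_s\cap B_U(T)}F(y\Delta(u_\t))d\t,\]
and define $B,B'$ analogously with $\Psi$ in place of $F$. Then $\int_{B_U(T)}F=A+A'$ and $\int_{B_U(T)}\Psi=B+B'$, so (\ref{eqn uniform Qeps}) controls $(A+A')/(B+B')$. The strategy is to compare $A/B$ to this quantity. The algebraic identity
\[\frac{A}{B}\;-\;\frac{A+A'}{B+B'}\;=\;\frac{A B' - A' B}{B(B+B')},\]
together with the bounds $A',B'\le c_1\eta' B$ coming from Lemma \ref{lem; quotient over psi on N unif small} (applied to the functions $\Phi$ and $\Psi$, noting that $F\le \Phi$ pointwise for $F\in\{\mathbf{1}_Q,\mathbf{1}_{Q_+},\mathbf{1}_{Q_{++}}\}$ since $\mathbf{1}_{\pi_1(Q_{++})}\le\phi$ and $Q\subseteq Q_+\subseteq Q_{++}$), will show $|A/B-(A+A')/(B+B')|$ is bounded by a universal constant times $\eta'$. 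Since $\mu(F)/\mu(\Psi)$ is a fixed finite constant and $(A+A')/(B+B')$ is within $\theta/2$ of it for $T$ sufficiently large (uniformly in $y\in Q_\eps$) by (\ref{eqn uniform Qeps}), choosing $\eta'$ first small enough so that the error from the swap is at most $\theta/2$, and then invoking Lemma \ref{lem; quotient over psi on N unif small} to produce the required $\eps'$, will finish the proof.

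Concretely, I would first fix $\theta>0$ and let $M:=\max_{F\in\mathcal F}\mu(F)/\mu(\Psi)<\infty$. Choose $\eta'>0$ small enough (depending only on $\theta$, $M$, and the constant $c_1$ from Lemma \ref{lem; quotient over psi on N unif small}) so that $2c_1\eta'(1+M)\le \theta/2$. Apply Lemma \ref{lem; quotient over psi on N unif small} to get $\eps'>0$ and $T_0^{(1)}>0$ so that $A'/B,B'/B\le c_1\eta'$ for all $T\ge T_0^{(1)}$ and all $y\in Q_\eps$, for each of the two relevant functions. Apply (\ref{eqn uniform Qeps}) with tolerance $\theta/2$ and each $F\in\mathcal F$ to obtain $T_0^{(2)}$ so that $|(A+A')/(B+B')-\mu(F)/\mu(\Psi)|\le\theta/2$ uniformly on $Q_\eps$ for $T\ge T_0^{(2)}$. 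Set $T_0=\max(T_0^{(1)},T_0^{(2)})$ and combine via the triangle inequality.

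The only mild subtlety is that Lemma \ref{lem; quotient over psi on N unif small} is stated only for $F\in\{\Psi,\Phi\}$, so when $F=\mathbf{1}_Q,\mathbf{1}_{Q_+},\mathbf{1}_{Q_{++}}$ I will bound $A'=\int F\le\int\Phi$ using $F\le\Phi$, which is why the family $\mathcal F$ was chosen to contain $\Phi$ as a dominating continuous function. Besides this bookkeeping, the argument is a routine splitting plus the algebraic identity above; the real content has already been isolated in Lemma \ref{lem; quotient over psi on N unif small}. I do not anticipate a serious obstacle, although some care is needed to ensure that the $\eps'$ produced by Lemma \ref{lem; quotient over psi on N unif small} can be taken uniformly across the finite family $\mathcal F$ (which is automatic since $\mathcal F$ is finite).
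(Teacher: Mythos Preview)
Your proposal is correct and follows essentially the same approach as the paper: split $B_U(T)$ into $I_{\eps'}(T)$ and the neighborhood $N_s:=a_{-s}N_{\eps'}(\tilde\varphi-1_G)a_s\cap B_U(T)$, invoke (\ref{eqn uniform Qeps}) on the full ball, and use Lemma \ref{lem; quotient over psi on N unif small} together with $F\le\Phi$ to control the contribution from $N_s$. The paper carries this out by writing $\int_{B_U(T)}F=(\mu(F)/\mu(\Psi)+\Theta(T))\int_{B_U(T)}\Psi$, subtracting $\int_{N_s}F$, and dividing by $\int_{I_{\eps'}(T)}\Psi$, whereas you use the algebraic identity $A/B-(A+A')/(B+B')=(AB'-A'B)/(B(B+B'))$; these are equivalent reorganizations of the same estimate.
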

\begin{proof}
	Let $\eps'$ and $T_0$ be as in Lemma \ref{lem; quotient over psi on N unif small}. Let $T \ge T_0$ and let $s = \log T$. Define $N_s = a_{-s}N_{\eps'}(\tilde{\varphi}-1_G)a_s \cap B_U(T)$. By equation (\ref{eqn uniform Qeps}), there exists $\Theta(T)$ which tends to zero uniformly over $Q_\eps$ such that \begin{align*}
	&\int_{B_U(T)}F(y\Delta(u_\t))d\t \\
	&=\left(\frac{\mu(F)}{\mu(\Psi)}+\Theta(T)\right)\int_{B_U(T)}\Psi(y\Delta(u_\t))d\t\\
	&= \frac{\mu(F)}{\mu(\Psi)}\left(\int_{I_{\eps'}(T)} \Psi(y\Delta(u_\t))d\t +  \int_{N_s} \Psi(y\Delta(u_\t))d\t\right) + \Theta(T)\int_{B_U(T)}\Psi(yu_\t)d\t.
	\end{align*} 
	
	Thus, by subtracting $\int_{N_s}F(y\Delta(u_\t))d\t$ and dividing by $\int_{I_{\eps'}(T)}\Psi(y\Delta(u_\t))d\t$, we obtain \begin{align*}
	\frac{\int_{I_{\eps'}(T)} F(y\Delta(u_\t))d\t}{\int_{I_{\eps'}(T)} \Psi(y\Delta(u_\t))d\t} 
	&= \frac{\mu(F)}{\mu(\Psi)}\left(1+\frac{\int_{N_s} \Psi(y\Delta(u_\t))d\t}{\int_{I_{\eps'}(T)} \Psi(y\Delta(u_\t))d\t}\right) \\&\text{ }+ \Theta(T)\left(1+\frac{\int_{N_s} \Psi(y\Delta(u_\t))d\t}{\int_{I_{\eps'}(T)} \Psi(y\Delta(u_\t))d\t}\right) - \frac{\int_{N_s} F(y\Delta(u_\t))d\t}{\int_{I_{\eps'}(T)} \Psi(y\Delta(u_\t))d\t}
	\end{align*}
	
	The conclusion then follows from Lemma \ref{lem; quotient over psi on N unif small}, where for the last term we note that for all $f \in \mathcal{F}, 0\le F \le \Phi$.
\end{proof}

We can now prove Theorem \ref{thm; 712}.

\begin{proof}[Proof of Theorem \ref{thm; 712}]
	Recall the notation from \S\ref{section; joining notation}. Define $$T_m' = \sup\{\tau > 0 : \varphi_m({B_U(\tau)}) \subseteq {B(\eta_0/4)}\}.$$ Note that $T_m\to \infty$ as $m \to \infty$.
	
	It follows from Corollary \ref{cor uniformity Qeps} (by writing out with error terms and dividing) that there exists $\eps'>0$ and $T_0>0$ such that for every $T \ge T_0$, every $y \in Q_\eps$, and every $F_1,F_2 \in \mathcal{F} = \{\1_Q, \1_{Q_+},\1_{Q_{++}}\}$, \begin{equation}
	\left|\frac{\int_{I_{\eps'}(T)} F_1(y\Delta(u_\t))d\t}{\int_{I_{\eps'}(T)} F_2(y\Delta(u_\t))d\t} - \frac{\mu(F_1)}{\mu(F_2)}\right| \le \eps.
	\label{eqn uniformity in 7 12}
	\end{equation} Moreover, this $T_0$ can be chosen so that $\lambda(\{\t \in I_{\eps'}(T_0): x\Delta(u_\t)\in Q_{++}\}) > 0$, where $\lambda$ denotes the Lebesgue measure on $U$.
	\newline
	
	\textbf{Claim:} Let $h_m = (g_m,1_G)$. For all $m$ with $T_m' \ge T_0$ and all $T_0 \le T \le T_m'$, \[\{\t \in I_{\eps'}(T) : x\Delta(u_\t), xh_m\Delta(u_\t)\in Q\} \ne \emptyset.\]
	\begin{proof}[Proof of claim]
		
		Recall that $\varphi_m(\t) = u_\t\inv g_m u_\t$ satisfies $$xh_m\Delta(u_\t) = x\Delta(u_\t)(\varphi_m(\t),1_G).$$ By definition of $T_m'$, if $|\t| \le T_m'$, then $\|(\varphi_m(\t),1_G) - (1_G,1_G)\|\le \eta_0/4$, so \begin{align*}
		\{\t \in I_{\eps'}(T) : x\Delta(u_\t) \in Q\}&\subseteq \{\t \in I_{\eps'}(T) : xh_m\Delta(u_\t) \in Q_+\} \\
		&\subseteq \{\t \in I_{\eps'}(T) : x\Delta(u_\t)\in Q_{++}\}
		\end{align*}
		
		By applying equation (\ref{eqn uniformity in 7 12}) to $F_1 = \1_{Q_{++}}$ and $F_2 = \1_Q$ with $y = x$, we have that \[\lambda(\{\t \in I_{\eps'}(T) : x\Delta(u_\t) \in Q\}) \ge (1+2\eps)\inv \lambda(\{\t \in I_{\eps'}(T) : x\Delta(u_\t)\in Q_{++}\}),\] where $\lambda$ is the Lebesgue measure. 
		
		And by applying it with $F_1 = \1_{Q_{+}}$ and $F_2 = \1_{Q}$ with $y = xh_m$, we have that \begin{align*}
		\lambda(\{\t \in I_{\eps'}(T) : xh_m\Delta(u_\t)\in Q\}) &\ge (1+2\eps)\inv \lambda(\{\t \in I_{\eps'}(T): xh_m\Delta(u_\t) \in Q_{+}\})\\
		&\ge (1+2\eps)\inv \lambda(\{\t \in I_{\eps'}(T): x \Delta(u_\t)\in Q\}) \\
		&\ge (1+2\eps)^{-2} \lambda(\{\t \in I_{\eps'}(T) : x\Delta(u_\t) \in Q_{++}\})
		\end{align*}
		
		Since $\{\t \in I_{\eps'}(T) : x\Delta(u_\t)\in Q\}$ and $\{\t \in I_{\eps'}(T) : xh_m\Delta(u_\t) \in Q\}$ are both subsets of $\{\t \in I_{\eps'}(T) : x\Delta(u_\t) \in Q_{++}\}$ from the definition of $T_m'$, the choice of $\eps$ implies that both subsets have greater than half the Lebesgue measure of the larger set (which is positive by choice of $T_0$), and thus their intersection cannot be empty.
	\end{proof}

	By the claim, for all sufficiently large $m$, there exists $\t_m \in I_{\eps'}(T_m')$ such that $$x\Delta(u_{\t_m}) \in Q  \text{ and }xh_m\Delta(u_{\t_m}) = x\Delta(u_{\t_m}) (\varphi_m(\t_m),1_G) \in Q.$$ By the compactness of $Q$ and by dropping to a subsequence if necessary, we may assume that there exists $x_\infty \in Q$ such that $x\Delta(u_{\t_m}) \to x_\infty$. 
	
	Let $$\tilde{\t}_m = \t_m / T_m' \in {B_U(1)},$$ so that $$\varphi_m(\t_m) = \tilde{\varphi}_m(\tilde{\t}_m).$$ Then again by the compactness of ${B_U(1)}$ and the uniform convergence of $\tilde{\varphi}_m \to \tilde\varphi$, we may assume that there exists $\t_\infty \in {B_U(1)}$ such that $$\varphi_m(\t_m) \to \tilde{\varphi}(\t_\infty).$$ 
	
	By definition of $I_{\eps'}(T_m')$, $\tilde{\t}_m \not\in N_{\eps'}(\tilde{\varphi}-1_G)$ for all $m$, so $$\tilde\varphi(\t_\infty) \ne 1_G.$$ Moreover, the image of $\tilde{\varphi}$ is contained within $C_G(U)=U$, so it follows from \cite[Lemma 7.7]{joinings} applied to $(\tilde\varphi(\t_\infty),1_G)$ that $\mu$ is quasi-invariant under a nontrivial connected subgroup of $U \times \{1_G\}$. Strict invariance follows from \cite[Lemma 7.3]{joinings}.	
\end{proof}

%The proof of Theorem \ref{thm; 717} now follows as in \cite[Theorem 7.17]{joinings}, with Theorem \ref{thm; 712} replacing references to Theorem 7.12 in that proof.

%For completeness, we include the following proof of Theorem \ref{thm; 717}. This is essentially the same as the proof of \cite[Theorem 7.17]{joinings}, with Theorem \ref{thm; 712} replacing references to Theorem 7.12 in that proof.

We now prove Theorem \ref{thm; 717}, following the approach of \cite[Theorem 7.17]{joinings}.

\begin{proof}[Proof of Theorem \ref{thm; 717}.]
	We begin by showing that $\br$-a.e.\ fiber measure $\mu_{x_2}^{\pi_2}$ is atomic. Define $$B = \{x_2 \in X_2 : \mu_{x_2}^{\pi_2} \text{ is not purely atomic}\},$$ and assume for contradiction that $\br(B)>0$. Then, as in \cite[Remark 4.8]{joinings}, we may find a compact set $$\Omega_1 \subseteq B \cap \{x:x^- \in \Lambda_r(\Gamma)\}$$ with $\br(\Omega_1)>0$ and satisfying (\ref{eqn;defn of omega1}). 
	
	Write $$\mu_{x_2}^{\pi_2} = (\mu_{x_2}^{\pi_2})^a + (\mu_{x_2}^{\pi_2})^c,$$ where $(\mu_{x_2}^{\pi_2})^a$ is the purely atomic part and $(\mu_{x_2}^{\pi_2})^c$ is the continuous part. Define $$\mathcal{B} = \{(x_1,x_2):x_2 \in B, x_1 \in \supp((\mu_{x_2}^{\pi_2})^c)\}.$$ Then there exists $$Q \subseteq \mathcal{B}$$ compact with $\mu(Q) \in (0,\infty)$, $\pi_1(Q)\subseteq \Omega_1$, and satisfying equation (\ref{eqn beginning hopf}) as in the beginning of this section, and similarly can define $Q_\eps \subseteq Q$.
	
	Since $Q_\eps \subseteq \mathcal{B}$, there exists $x = (x_1,x_2)\in Q_\eps$ and a sequence $(x_{1,m},x_2) \in Q_\eps$ with $x_{1,m} \ne x_1$ and $$(x_{1,m},x_2) \to x.$$ We will show that this implies that $\mu$ is invariant under a non-trivial connected subgroup of $U \times\{1_G\}$, a contradiction to \cite[Lemma 7.16]{joinings}.
	
	Write $$(x_{1,m},x_2) = x(g_m,1_G)$$ where $g_m \to 1_G$, $g_m \ne 1_G$. There are two possible cases.
	
	First, suppose that $g_m \in U$ for all sufficiently large $m$. Then by \cite[Lemma 7.7]{joinings}, $\mu$ will be quasi-invariant under the subgroup generated by $\{(g_m,1_G)\}$, which implies invariance under a non-trivial connected subgroup of $U\times\{1_G\}$ becauase $g_m \to 1_G$ and $U$ is unipotent. This is a contradiction.
	
	Thus, it must be that there exists a subsequence $g_{m_k} \not\in U$ for all $m_k$. Then by Theorem \ref{thm; 712}, $\mu$ is invariant under a nontrivial connected subgroup of $U\times \{1_G\}$, again a contradiction.
	
	In all cases, we obtain a contradiction, and so it must have been that $$\br(B)=0,$$ that is, $\br$-a.e.\ fiber measure is atomic. Now, define $$Z = \left\{(x_1,x_2)\in X: \mu_{x_2}^{\pi_2}(\{x_1\}) = \max\limits_{y \in \pi_2\inv(x_2)}\mu_{x_2}^{\pi_2}(\{y\})\right\}.$$ We have shown that $\br$-a.e.\ fiber measure is atomic, and $Z$ is $\Delta(U)$ invariant, so it follows from ergodicity of the joining $\mu$ that $\mu(Z^c)=0$. This implies that there exists some $\ell \in \N$ so that $\br$-a.e.\ $x_2$ has $$|\pi_1(\pi_2\inv(x_2))|=\ell,$$ and the fiber measure $\mu_{x_2}^{\pi_2}$ is the uniform distribution on $\ell$ points, as desired.
\end{proof}


\begin{thebibliography}{99}
	\bibitem{babillot} M. Babillot. \emph{On the classification of invariant measures for horospherical foliations on nilpotent covers of negatively curved manifolds,} Random walks and Geometry (Kaimanovich, Ed.) de Gruyter, Berlin (2004), 319--335.
	\bibitem{OhBenoist} Y. Benoist, H. Oh. \emph{Geodesic planes in geometrically finite acylindrical 3-manifolds,} preprint, arXiv:1802.04423. To appear in Erg. Th. and Dyn. Syst..
	\bibitem{bowditch} B. H. Bowditch. \emph{Geometrical finiteness for hyperbolic groups,} J. Funct. Anal., \textbf{113}(2) (1993), 245--317.
	\bibitem{Brudnyi} J. Brudnyi, M. Ganzburg. \emph{A certain extremal problem for polynomials in $n$ variables,} Izv. Akad. Nauk SSSR Ser. Mat., \textbf{37} (1973), 344--355.
	\bibitem{burger} M. Burger. \emph{Horocycle flow on geometrically finite surfaces,} Duke Math. J., \textbf{61} (1990), 779--803.
	\bibitem{EL} M. Einsiedler, E. Lindenstrauss. \emph{Joining of higher rank diagonalizable actions on locally homogeneous spaces,} Duke Math. J., \textbf{138} (2007), no. 2, 203--232.
	\bibitem{FS} L. Flaminio, R. Spatzier. \emph{Ratner's rigidity theorem for geometrically finite Fuchsian groups}, Proceedings, Maryland (1987).
	\bibitem{FSinventiones} L. Flaminio, R. Spatzier. \emph{Geometrically finite groups, Patterson-Sullivan measures, and Ratner's theorem}, Inventiones, \textbf{99} (1990), 601--626.
	\bibitem{KleinbockMargulis} D. Kleinbock and G. Margulis. \emph{Flows on homogeneous spaces and Diophantine approximation on manifolds,} Ann. Math. \textbf{148} (1998), 339--360.
	\bibitem{KLW} D. Kleinbock, E. Lindenstrauss, B. Weiss, \emph{On fractal measures and Diophantine approximation,} Selecta Math. \textbf{10} (2004), 479--523.
	\bibitem{ledrappier1} F. Ledrappier. \emph{Invariant measures for the stable foliation on negatively curved periodic manifolds,} Ann. Inst. Fourier \textbf{58} (2008), 85--105.
	\bibitem{ledrappier2} F. Ledrappier, O. Sarig. \emph{Invariant measures for the horocycle flow on periodic hyperbolic surfaces,} Israel J. Math. \textbf{160} (2007), 281--315.
	\bibitem{LeeOh} M. Lee, H. Oh. \emph{Topological proof of Benoist-Quint's orbit closure theorem for $\SO(d,1)$,} preprint, arXiv:1903.02696v2.
	\bibitem{MacShap} F. Maucourant, B. Schapira, \emph{Distribution of orbits in $\R^2$ of a finitely generated group of $\operatorname{SL}(2,\R)$,} Am. J. Math. \textbf{136}(6) (2014) 1497--1542.
	\bibitem{OMM} C. McMullen, A. Mohammadi, H. Oh. \emph{Horocycles in hyperbolic 3-manifolds,} Geom. Funct. Anal. \textbf{26} (2016) 961--973.
	\bibitem{OMM2} C. McMullen, A. Mohammadi, H. Oh. \emph{Geodesic planes in hyperbolic 3-manifolds,} Invent. Math. \textbf{209} (2017), no. 2, 425--461.
	\bibitem{OMM3} C. McMullen, A. Mohammadi, H. Oh. \emph{Geodesic planes in the convex core of an acylindrical 3-manifold,} preprint, arXiv:1802.03853.
	\bibitem{joinings} A. Mohammadi, H. Oh. \emph{Classification of joinings for Kleinian groups,} Duke Math. J., \textbf{165} (2016), no. 11, 2155--2223.
	\bibitem{matcoeff} A. Mohammadi, H. Oh. \emph{Matrix coefficients, counting and primes for orbits of geometrically finite groups}, Journal of European Math. Society, \textbf{17} (2015), 837--897. 
	\bibitem{br infinite} H. Oh, N. Shah. \emph{Equidistribution and counting for orbits of geometrically finite hyperbolic groups,} Journal of AMS, \textbf{26} (2013), 511--562.
	%\bibitem{FS fuchsian}
	\bibitem{pan} W. Pan. \emph{Joining measures for horocycle flows on abelian covers,} J. Modern Dynamics \textbf{12} (2018), 17--54.
	\bibitem{ratnerfactors} M. Ratner. \emph{Rigidity of the horocycle flows,} Ann. of Math., \textbf{115} (1982), 465--489.
	\bibitem{ratner} M. Ratner. \emph{Horocycle flows, joinings and rigidity of products,} Ann. of Math., \textbf{118} (1983), 277--313.
	\bibitem{ratneracta} M. Ratner. \emph{On measure rigidity of unipotent subgroups of semisimple groups.} Acta Math. \textbf{(165)} (1990), 220--309.
	\bibitem{ratnerannals} M. Ratner. \emph{On Raghunathan's measure conjecture.} Ann. of Math., \textbf{(134)} (1991), no. 3, 545--607.
	\bibitem{roblin} T. Roblin, \emph{Ergodicit\'e et \'equidistribution en courbure n\'egative.} M\'{e}m. Soc. Math. Fr. (N.S.), \textbf{95:vi+96} (2003).
	\bibitem{rudolph} D. Rudolph. \emph{Ergodic behaviour of Sullivan's geometric measure on a geometrically finite hyperbolic manifold,} Erg. Th. and Dyn. Syst., \textbf{2} (1982), 491--512.
	\bibitem{sarig1} O. Sarig. \emph{Invariant Radon measures for the horocycle flow on Abelian covers,} Inv. Math. \textbf{157} (2004), 519--551.
	\bibitem{sarig2} O. Sarig. \emph{Unique ergodicity for infinite measures,} Proc. Inter. Congress Math., Hyderabad (2010).
	\bibitem{schapira} B. Schapira. \emph{Lemme de l'Ombre et non divergence des horosph\`eres d'une vari\'et\'e g\'eom\'etriquement finie,} Annales de l'Institut Fourier, \textbf{54} (2004), no. 4, 939--987.
	\bibitem{sullivan} D. Sullivan. \emph{Entropy, Hausdorff measures old and new, and limit sets of geometrically finite Kleinian groups,} Acta Math., \textbf{153} (1984), 259--277.
	\bibitem{winter} D. Winter. \emph{Mixing of frame flow for rank one locally symmetric manifolds and measure classification,} Israel J. Math., \textbf{210} (2015), 465--507.
	\bibitem{witte} D. Witte. \emph{Measurable quotients of unipotent translations,} Transactions of the American Mathematical Society, \textbf{345} (1994), 577--594.
\end{thebibliography}
\end{document}